\documentclass[11pt]{article}
\usepackage[letterpaper,margin=1.0in]{geometry}

\usepackage{sty}

\usepackage{authblk}
\author[1]{Ansh Nagda\thanks{Email: \textit{anshnagda@gmail.com}. A large fraction of this work was done when this author was affiliated with the EECS Department at UC Berkeley.}}
\author[2]{Alexander S.\ Wein\thanks{Email: \textit{aswein@ucdavis.edu}. Partially supported by a Sloan Fellowship and NSF CAREER Award CCF-2338091.}}

\affil[1]{Independent}
\affil[2]{Department of Mathematics, UC Davis}

\title{From Weak to Strong Testing in Gaussian Models}

\date{}

\begin{document}

\maketitle

\begin{abstract}
We study the computational complexity of hypothesis testing in the spiked Wigner model, a prototypical model for detecting low-rank structure in a large random matrix. Below the ``BBP'' eigenvalue transition, it is expected that strong detection --- with both type I and II errors vanishing --- requires exponential time. Assuming this as a conjecture, we determine the limits of polynomial-time weak detection, exactly characterizing the possible tradeoffs between type I and II errors. Specifically, the optimal tradeoff is achieved by a particular linear spectral statistic. Thus, the question of weak detection is entirely reduced to that of strong detection.

The proof builds on ideas of Nagda--Raghavendra (2025) and Moitra--Wein (2025). The low-degree likelihood ratio (LDLR) plays a key role: any test that slightly beats the LDLR can be boosted to have an even higher success probability. This leads us to establish a computational analogue of the Neyman--Pearson lemma for a subclass of additive Gaussian models: for a given super-polynomial runtime, the best possible tradeoff between type I and II errors is either the one achieved by thresholding the LDLR, or the trivial tradeoff that results from strong detection.
\end{abstract}

\newpage

\tableofcontents

\newpage

\section{Introduction}

We consider one of the most basic tasks in statistics: simple-versus-simple hypothesis testing. That is, the aim is to decide which of two known distributions $\PP$ (``planted'') and $\QQ$ (``null'') on $\RR^N$ generated a given sample. A \emph{test} is a function $A \randmap \RR^N \to \{0,1\}$, where the ``star'' notation indicates that we allow randomized functions. The classical Neyman--Pearson lemma characterizes the possible tradeoffs between type I error $\QQ(A=1)$ and type II error $\PP(A=0)$. However, the optimal test may be difficult to compute, and our goal is to understand what is achievable by algorithms of limited computation power (say, polynomial runtime) in a high-dimensional regime where $N \to \infty$.

We begin by introducing the \emph{spiked Wigner model} as a motivating example, although some of our results will apply more generally to a class of \emph{additive Gaussian models} introduced later (\Cref{def:agm}).

\subsection{Spiked Wigner and linear spectral statistics}

We introduce the well-studied task of detecting the presence of a rank-1 ``spike'' in a Gaussian Wigner random matrix. Models of this style were first popularized by~\cite{johnstone,bbp,peche,fp}, among others.

\begin{definition}[Spiked Wigner model]
\label{def:wig-intro}
Consider hypothesis testing between the following two distributions $\QQ = \QQ_n$ and $\PP_\lambda = \PP_{\lambda,n}$ over matrices $\RR^{n \times n}$ for $n \in \NN_{\ge 1}$.
\begin{itemize}
    \item Under the ``null'' distribution $\QQ_n$, we observe $Y \sim \GOE(n)$ drawn from the Gaussian Orthogonal Ensemble (GOE), i.e., $Y \in \RR^{n \times n}$ is symmetric with $\{Y_{ij} \,:\, i \le j\}$ drawn independently as $Y_{ii} \sim \cN(0,2/n)$ and $Y_{ij} \sim \cN(0,1/n)$ for $i<j$.
    \item Under the ``planted'' distribution $\PP_{\lambda,n}$, we observe
    \[ Y = \lambda \frac{xx^\top}{\|x\|_2^2} + W \]
    where $\lambda \ge 0$ is the ``signal-to-noise ratio'' (SNR), $W \sim \GOE(n)$ is the ``noise,'' and the ``spike'' $x \in \RR^n$ has entries drawn i.i.d.\ from some ``prior'' $\pi$, a distribution on $\RR$ with mean 0 and variance 1. In the (unlikely) event that $x=0$, define $xx^\top/\|x\|_2^2 := 0$.
\end{itemize}
The model is parametrized by $\lambda$ and $\pi$, which are known to the statistician. We will be interested in the regime $n \to \infty$ with $\lambda$ and $\pi$ held fixed.
\end{definition}

In the notation $\PP_\lambda$ we have suppressed the dependence on $\pi$ because we will always fix $\pi$ but may consider multiple values for $\lambda$. We have included the exact normalizing factor $\|x\|_2^2$, whereas some authors replace this by the deterministic value $n$ that it concentrates around; our choice is for consistency with prior work~\cite{weak-wigner,precise-error} that we rely on, although we do not anticipate any meaningful difference between the two conventions. Our main result will further require $\pi$ to be supported on a finite subset of $\RR$. For concreteness, let us also make this assumption in the discussion that follows, although some results we reference have weaker assumptions (such as compact support or subgaussian). A canonical choice of prior to have in mind is the \emph{sparse Rademacher distribution}: for a parameter $\rho \in (0,1]$,
\begin{equation}\label{eq:sparse-rad}
x_i = \begin{cases} 1/\sqrt{\rho} & \text{with probability } \rho/2, \\ -1/\sqrt{\rho} & \text{with probability } \rho/2, \\ 0 & \text{with probability } 1-\rho. \end{cases}
\end{equation}
Note that we assume $\pi$ does not depend on $n$, so for us, $\rho$ will always be a fixed constant. This differs from much of the literature on \emph{sparse PCA}, where $\rho$ may depend on $n$, e.g.~\cite{BR-reduction,cov-thresh,subexp-sparse}.

One natural criterion for successful testing is \emph{strong detection}, in the following sense.

\begin{definition}\label{def:strong-dist}
Let $\PP = \PP_n$ and $\QQ = \QQ_n$ be distributions on $\RR^N$ for some $N = N_n$. A test $A \randmap \RR^N \to \{0,1\}$ (also depending on $n$) is said to \emph{strongly distinguish} $\PP$ and $\QQ$ if
\[ \QQ(A=0) \to 1 \qquad \text{and} \qquad \PP(A=1) \to 1 \qquad \text{as } n \to \infty. \]
\end{definition}

In the spiked Wigner model, strong detection is easily achieved when $\lambda > 1$ by inspecting the maximum eigenvalue of $Y$~\cite{eval-evec}. This is known as the ``BBP transition,'' named for Baik, Ben Arous, and P{\'e}ch{\'e}~\cite{bbp}. Below the BBP threshold (i.e., $\lambda < 1$), the maximum eigenvalue (and more generally, any function of the spectrum) fails to achieve strong detection~\cite{limitation-spectral}. In general, strong detection is possible when $\lambda > \lambda^*$ for a particular threshold $\lambda^* = \lambda^*(\pi) \le 1$ that depends on the prior $\pi$~\cite{fundamental-limits}. This threshold may be strictly below 1: for instance, this happens for the sparse Rademacher prior~\eqref{eq:sparse-rad} when the parameter $\rho$ is sufficiently small. By the Neyman--Pearson lemma, the statistically optimal test is to threshold the likelihood ratio, but it is not clear how to compute this without enumerating all exponentially-many choices for the spike $x$. It is expected that this computational intractability may be inherent: for $\lambda^* < \lambda < 1$, there is no known polynomial-time test that achieves strong detection. In fact, every known test requires fully exponential runtime $\exp(n^{1-o(1)})$.

Our current toolbox in average-case complexity theory does not allow us to definitively rule out polynomial-time strong detection in the regime $\lambda^* < \lambda < 1$. However, there are some concrete lower bounds against restricted classes of algorithms, showing that certain known approaches cannot work in this regime. Specifically, there are results ruling out approximate message passing~\cite{LKZ-sparse,LKZ-mmse} and low-degree polynomials~\cite{ld-notes}. Later, we will discuss the framework of low-degree polynomials in more detail, as it will play a central role in our work.

To summarize, in the regime $\lambda^* < \lambda < 1$, strong detection is statistically possible but presumed hard for (computationally) efficient algorithms. Our work will focus on this same ``hard'' regime but we will explore what weaker notions of success \emph{can} be achieved by efficient algorithms here. It is not difficult to achieve \emph{some} nontrivial performance: for any $\lambda > 0$, thresholding the trace $\Tr(Y)$ already achieves \emph{weak detection}, i.e., testing with success probability $\ge 1/2 + \eps$ for a constant $\eps > 0$ (depending on $\lambda$ but not $n$). Our aim will be to characterize the exact limits of weak detection, as described by the \emph{receiver operating characteristic (ROC) curve}. This is a function $\phi: [0,1] \to [0,1]$ that describes the tradeoff between type I and II errors for a family of tests:
\begin{itemize}
    \item On the horizontal axis, we plot $\alpha = \QQ(A=1)$, the \emph{type I error rate} or \emph{false positive rate}.
    \item On the vertical axis, we plot $\beta = \PP(A=1)$, the \emph{power} or \emph{true positive rate}. (The \emph{type II error rate} is $1-\beta$.)
\end{itemize}
For a given $\alpha \in [0,1]$, $\phi(\alpha)$ is defined as the largest $\beta$ for which some test $A$ in the given class achieves $(\alpha,\beta)$, meaning $\QQ(A=1) \le \alpha$ and $\PP(A=1) \ge \beta$. We will focus on a question posed by~\cite{precise-error}: \emph{identify the best possible ROC curve for polynomial-time tests (or more generally, tests satisfying some runtime bound)}. Note that strong detection corresponds (asymptotically) to the ``perfect'' ROC curve $\phi(\alpha) = 1$.

Among all known polynomial-time tests, the best ROC curve is achieved by a particular \emph{linear spectral statistic (LSS)}, meaning a quantity of the form $\sum_i f(\mu_i)$ where $\mu_1 \ge \mu_2 \ge \cdots \ge \mu_n$ are the eigenvalues of $Y$ and $f$ is a function $f: \RR \to \RR$. Specifically, the algorithm is to threshold the value
\begin{equation}\label{eq:lss}
\sum_{i=1}^n f_\lambda(\mu_i) \qquad \text{with} \qquad f_\lambda(\mu) = -\log(1-\lambda\mu+\lambda^2)
\end{equation}
where $\lambda$ is the spiked Wigner SNR. By varying the threshold, this achieves (asymptotically as $n \to \infty$) the ROC curve
\begin{equation}\label{eq:phi-lss}
    \phi_\lambda(\alpha) = 1 - \Phi\left[\Phi^{-1}(1-\alpha) - \sqrt{\frac{1}{2}\log\left(\frac{1}{1-\lambda^2}\right)}\right],
\end{equation}
where $\Phi$ is the standard normal CDF. This result can be found in~\cite{weak-wigner}, although the version we need follows from earlier work~\cite[Theorem~1.6 and Remark~1.7]{sk-ferro}, which builds on~\cite{LSS-1,LSS-2}. While the formula~\eqref{eq:phi-lss} may appear opaque, it arises for a simple reason: the statistic $\sum_i f(\mu_i)$ converges to Gaussian under both $\PP_\lambda$ and $\QQ$, where the two Gaussians have the same variance but different means. The specific choice of $f = f_\lambda$ maximally separates the two Gaussians, and throughout when we refer to ``LSS'' we will always assume the optimal function $f_\lambda$ is used. Note that LSS depends only on the spectrum of $Y$, which (by Gaussian rotational invariance) depends only on the magnitude of the rank-1 signal but not its direction. This is reflected in~\eqref{eq:phi-lss}, which depends on $\lambda$ but not $\pi$.

For any $\lambda < 1$, no algorithm is known to beat the ROC curve $\phi_\lambda$ from~\eqref{eq:phi-lss} --- even at a single point --- without taking fully exponential runtime $\exp(n^{1-o(1)})$. Our main result shows that any hypothetical algorithm that beats $\phi_\lambda$ can be boosted to a strong distinguisher that works at a slightly higher SNR.

\begin{theorem}[Main result for spiked Wigner]
\label{thm:main-wig}
Consider the spiked Wigner model (\Cref{def:wig-intro}). Fix constants $0 < \lambda < \lambda' < 1$ and fix a prior $\pi$ that is mean-zero, variance-one, and supported on a finite subset of $\RR$. Fix constants $\alpha \in [0,1]$ and $\epsilon > 0$, and fix an arbitrary growing sequence $d = d_n \to \infty$. Suppose there is a test $A \randmap \RR^{n \times n} \to \{0,1\}$ computable in time $T = T_n$ that beats the ROC curve $\phi_\lambda$ from~\eqref{eq:phi-lss} in the following sense: for all sufficiently large $n$,
\[ \QQ(A=1) \le \alpha \qquad\text{and}\qquad \PP_\lambda(A=1) \ge \phi_\lambda(\alpha) + \eps. \]
Then there is a test computable in time $n^{O(d)} \cdot T$ that strongly distinguishes (\Cref{def:strong-dist}) $\PP_{\lambda'}$ and $\QQ$. The algorithm requires access to the parameters $\lambda,\lambda',\pi,\alpha,\eps,d$.
\end{theorem}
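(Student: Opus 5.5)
We will prove the theorem by boosting in two stages, following the strategy sketched in the abstract.

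\emph{Stage 1: beating LSS means beating the low-degree likelihood ratio.} Below the BBP threshold the model is contiguous, and the truncated low-degree likelihood ratio $L^{\le d}$ (with $d = d_n \to \infty$ slowly) behaves asymptotically like the full likelihood ratio restricted to its spectral part. Concretely, $\log L^{\le d}$ should converge to the same Gaussian limits as the LSS $\sum_i f_\lambda(\mu_i)$: mean $\mp\sigma^2/2$ under $\QQ,\PP_\lambda$ and variance $\sigma^2 = \frac12\log\frac{1}{1-\lambda^2}$. This is consistent with the known computation $\EE_\QQ (L^{\le d})^2 \to (1-\lambda^2)^{-1/2}$. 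Thresholding $L^{\le d}$ would therefore trace exactly the curve $\phi_\lambda$ in~\eqref{eq:phi-lss}. A test $A$ that beats $\phi_\lambda$ by $\eps$ at level $\alpha$ then satisfies the following. Consider the likelihood-ratio-type quantity $\EE_{\PP_\lambda}[A] - \EE_\QQ[A \cdot L^{\le d}]$, or equivalently the correlation of $A$ with $L - L^{\le d}$ under $\QQ$. By a Neyman--Pearson argument applied to $L^{\le d}$ (using $\EE_\QQ[A L^{\le d}] \le$ the optimum over tests with $\QQ$-mass $\alpha$, which is $\phi_\lambda(\alpha)+o(1)$), we get
\[ \EE_\QQ\bigl[A\,(L - L^{\le d})\bigr] \ge \eps - o(1). \]
So $A$ has non-negligible correlation with the high-degree part of the likelihood ratio.

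\emph{Stage 2: high-degree correlation boosts to strong detection at $\lambda'$.} Following Nagda--Raghavendra and Moitra--Wein, we exploit the additive Gaussian structure. Write $Y' = Y_{\lambda'}$, and split it via independent Gaussian noise as $Y_1 = Y' + cZ$ and $Y_2 = Y' - Z/c$ (suitably normalized). This gives two independent observations of the same spike, each at a lower SNR; we choose the split so that the SNRs exceed $\lambda$, or rescale to exactly $\lambda$ by adding extra noise. Repeating with many independent noise draws produces $m$ samples $Y^{(1)},\dots,Y^{(m)}$. Each is marginally $\PP_\lambda$ (or $\QQ$), but they are correlated through the shared signal and the shared original noise. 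We then run $A$ on each sample, combine the outputs with degree-$O(d)$ polynomial corrections computed in time $n^{O(d)}$, and threshold the aggregate. The key fact is that the high-degree component $L-L^{\le d}$ is nearly orthogonal across these correlated copies under $\QQ$, because the Hermite coefficients of degree $>d$ decay as $\rho^{k}$ with correlation $\rho<1$. Under $\PP_{\lambda'}$, by contrast, the shared spike makes the $\eps$-advantage add coherently. A second-moment (Chebyshev) computation then shows that the averaged statistic separates with vanishing errors once $m$ is a large constant depending on $\eps,\lambda,\lambda'$. The low-degree part of $A$ is subtracted off by estimating its projection onto degree-$\le d$ polynomials, which costs $n^{O(d)}$ time.

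\emph{Main obstacle.} The hardest step is the variance bound in Stage 2. We must control the cross-correlations $\EE[A(Y^{(i)})A(Y^{(j)})]$ under both hypotheses and show that the high-degree contributions under $\QQ$ are $o(1)$ while the planted mean gap stays $\Omega(\eps)$. I would first try to carry this out via the Gaussian noise operator (Ornstein--Uhlenbeck semigroup) representation of the correlated copies. Under that representation, the degree-$>d$ Hermite mass is damped by $\rho^d \to 0$. The finite-support assumption on $\pi$ enters in Stage 1, where we need the Gaussian limit of $L^{\le d}$ and uniform integrability to transfer the ROC curve.
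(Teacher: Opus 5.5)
\textbf{Main gap: the planted side of your Stage~2.} Write $a:=\EE[A]$ and $\mu:=T_\rho a_{>d}$. Given $Y'$, your corrected aggregate has conditional mean $\mu(Y')$. By the law of total variance, its variance under $\PP_{\lambda'}$ is therefore at least $\Var_{\PP_{\lambda'}}(\mu)$ for every $m$. Equivalently, for $i\neq j$ the cross term $\EE[A(Y^{(i)})A(Y^{(j)})]=\EE_{\PP_{\lambda'}}[(T_\rho a)^2]$ need not be close to $(\EE_{\PP_{\lambda'}}[T_\rho a])^2$, so the shared spike does not make anything concentrate. Nothing in the hypotheses forces $\Var_{\PP_{\lambda'}}(\mu)=o(1)$. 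The mean $\EE_{\PP_{\lambda'}}[\mu]=\EE_{\PP_\lambda}[a_{>d}]\ge\eps-o(1)$ can come from $\mu$ being large only on a rare event, or from $\mu$ fluctuating at order one. For instance, gating a hypothetical exponential-time strong detector at SNR $\lambda$ by $\{\Tr(Y)>0\}$ still beats $\phi_\lambda$ at small $\alpha$. Yet it makes $T_\rho a(Y')\approx\Phi(c\,\Tr(Y'))$, a non-degenerate random variable under $\PP_{\lambda'}$.

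Even bounding $\EE_{\PP_{\lambda'}}[\mu^2]$ needs the ratio-of-norms bound (\Cref{lem:ratio-norms}) to control $\|a_{\le d}\|_{\PP_\lambda}$ by $\|a_{\le d}\|_\QQ$, and this costs $\tau=d^{O(\sqrt d)}$. Paley--Zygmund then yields only a one-sided test, with false-positive rate $O(\rho^{2d})$ and planted success $\Omega(1/\tau^2)$ (\Cref{lem:ratio-to-one-sided}). The missing idea is the boost in \Cref{lem:one-sided-to-two-sided}. Run that one-sided test on $m=\lceil 1/\sqrt{\gamma\delta}\rceil=\exp(\Theta(d))$ correlated copies $\rho Y'+\sqrt{1-\rho^2}G^{(i)}$ and output the OR. Under $\QQ$ a union bound suffices, which is affordable only because $\delta=\exp(-\Omega(d))$ is much smaller than $\gamma=\exp(-o(d))$. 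Under $\PP_{\lambda'}$ you must prove that the permutation-symmetrized acceptance probability $g$ satisfies $T_\rho g\ge\gamma$ with probability $1-o(1)$. That large-set expansion property (\Cref{lem:lse}) is proved via two-function reverse hypercontractivity for a fixed spike, plus a chi-squared transfer between spikes with different typical empirical distributions. This is where finite support of $\pi$ is actually used, not in Stage~1. Your opening cloning step also fails as stated: two independent copies each at SNR at least $\lambda$ require $\lambda'\ge\sqrt2\,\lambda$.

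\textbf{Secondary gap: Stage~1.} Suppose $L^{\le d}$ is asymptotically lognormal and uniformly integrable under $\QQ$. Then your Neyman--Pearson argument does give $\EE_{\PP_\lambda}[a_{>d}]=\EE_\QQ[A(L-L^{\le d})]\ge\eps-o(1)$, which is precisely the quantity driving Step~2 of the paper. But that limit law is only asserted, and the paper explicitly calls identifying the ROC curve of the LDLR in spiked Wigner nontrivial. The paper sidesteps it by invoking Moitra--Wein~\cite{precise-error} (Proposition~2.20, together with the known LSS curve). That result converts a point above $\phi_\lambda$ into a bounded score $F\in[1/B,B]$ with ratio $(1-\lambda^2)^{-1/4}+\Omega(1)$. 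Comparing with $\|L^{\PP_\lambda}_{\le d}\|_\QQ=(1-\lambda^2)^{-1/4}+o(1)$ shows that $F$ beats the LDLR ratio. This uses only the value of the LDLR norm, not its distribution.
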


We defer a discussion on the model of computation to \Cref{sec:background-compmodel}. The above reduction blows up the runtime by a slightly super-polynomial factor, $n^{O(d)}$ where $d$ may grow arbitrarily slowly with $n$. This is negligible compared to the best known algorithms for strong detection when $\lambda < 1$, which have runtime $\exp(n^{1-o(1)})$.

We give two interpretations of the result, one conditional and one unconditional:

\begin{itemize}

    \item {\bf Conditional:} Fix a class $T$ of runtimes strictly between polynomial and exponential; for concreteness, say $\exp(O(n^\delta))$ for some specific choice of constant $\delta \in (0,1)$. Suppose we adopt the predominant belief in the field and assume there is no $T$-time algorithm for strong detection when $\lambda < 1$. Assuming this conjecture, our result implies that LSS is optimal in a sharp sense: for any $\lambda < 1$, there is no $T$-time algorithm that asymptotically beats the ROC curve $\phi_\lambda$. This same conclusion was reached by~\cite{precise-error} based on a different conjecture, and we detail the comparison in \Cref{sec:prior-work}.

    \item {\bf Unconditional:} Without assuming any unproven conjectures, our result still narrows down the ROC curve to two possibilities and establishes a tight connection between strong and weak detection. Fix a class of super-polynomial runtimes $T$, such as the one above. Our result implies there is some computational threshold $\lambda^*_\mathrm{comp} \in [0,1]$ such that: (i) for $\lambda > \lambda^*_\mathrm{comp}$, there is a $T$-time algorithm for strong detection,\footnote{Strong detection is monotone in $\lambda$ because you can add extra Gaussian noise to simulate a lower SNR.} and (ii) for $\lambda < \lambda^*_\mathrm{comp}$, the best possible ROC curve for $T$-time algorithms is $\phi_\lambda$. This reduces the question of weak detection to that of strong detection, because once we know the strong-detection threshold $\lambda^*_\mathrm{comp}$, the entire weak-detection landscape is determined. This is significant because we have well-developed tools for probing the computational complexity of strong detection --- namely the low-degree framework discussed later --- but no established analogue for weak detection.
    
\end{itemize}

Notably, the unconditional interpretation applies even for the class of \emph{all} algorithms, with no bound on runtime. In this case, the threshold $\lambda^*_\mathrm{comp}$ coincides with the statistical threshold $\lambda^*$ mentioned earlier. Our conclusion is consistent with the known information-theoretic limits: for $\lambda < \lambda^*$, it is impossible for \emph{any} test to beat the ROC curve of LSS~\cite{fundamental-limits}.

The prior work that most directly inspired our proof ideas is~\cite{NR-clique}, which gives a related self-reducibility result for the planted clique model. We will also use tools from the above-mentioned~\cite{precise-error}, which was the first to study computational hardness for ROC curves. We discuss the comparison to both of these works in \Cref{sec:prior-work}.

What is special about the specific function $\phi_\lambda$ that makes it the only viable ROC curve aside from the perfect one? The answer lies in its connection to the low-degree likelihood ratio. We elaborate in the next section.

\subsection{Computational Neyman--Pearson lemma}
\label{sec:comp-np}

We now discuss in higher generality some of the mechanisms that underlie the above Wigner result. Our second main result will apply to a more general class of additive Gaussian models, of which spiked Wigner is a special case (see \Cref{sec:equiv-gaussian}).

\begin{definition}[Additive Gaussian Model]
\label{def:agm}
Consider hypothesis testing between the following two distributions $\QQ$ and $\PP_\lambda$ over $\RR^N$, where $\lambda \ge 0$ and $N \in \NN_{\ge 1}$.
\begin{itemize}
    \item Under $\QQ$, we observe $Y \sim \cN(0,I_N)$.
    \item Under $\PP_\lambda$, we observe $Y = \lambda X + Z$ where $\lambda \ge 0$ is the ``SNR,'' $Z \sim \cN(0,I_N)$ is the ``noise,'' and the ``signal'' $X$ is some $\RR^N$-valued random variable that is independent from $Z$. We assume $X$ has finite moments of all orders: any polynomial $f: \RR^N \to \RR$ has $\EE|f(X)| < \infty$.
\end{itemize}
The model is parameterized by the choice of $\lambda$ and $X$. We will often consider an asymptotic regime where $n \to \infty$, $N = N_n \to \infty$, $X = X_n$ depends on $n$, but $\lambda$ is held fixed. For each $n$ we assume $X$ has finite moments as above, but there is no restriction on how the moments scale with $n$.
\end{definition}

As usual, we assume the statistician has full knowledge of the distributions $\PP_\lambda$ and $\QQ$, including the distribution of $X$ (for our purposes, the order-$\le d$ moments of $X$ will be enough).

The classical Neyman--Pearson lemma~\cite{np-lemma} states that optimal hypothesis testing is achieved by thresholding the likelihood ratio (LR) $L = d\PP_\lambda/d\QQ$; by varying the threshold, this attains the best possible ROC curve among all tests. A central object for us will be the \emph{low-degree likelihood ratio (LDLR)}, which acts as a computationally-bounded proxy for the LR. We will formally introduce the LDLR in \Cref{sec:background-ldlr}, but briefly, it is the best low-degree polynomial approximation to the LR, or equivalently, the orthogonal projection of the LR onto the vector space of low-degree polynomial functions. The term ``low-degree likelihood ratio'' was coined by~\cite{hopkins-thesis}, building on~\cite{pcal,HS-bayesian,sos-detect}. It has played a key role in average-case complexity theory by capturing, in a sense, the efficacy of tests that compute low-degree polynomial functions $\RR^N \to \RR$. We refer to the surveys~\cite{ld-notes,ld-survey} for additional context.

For a parameter $d$, possibly depending on $n$, the LDLR for testing $\PP$ versus $\QQ$ is the maximizer of the ratio
\[ R^{\PP}(f) := \frac{\EE_{\PP}[f]}{\sqrt{\EE_\QQ[f^2]}} \]
over all (multivariate) polynomial functions $f: \RR^N \to \RR$ of degree $\le d$. Taking $d \to \infty$ (for fixed $n$) recovers the usual LR. We give more background on the LDLR in \Cref{sec:background-ldlr}. Traditionally, the LDLR has been used to make predictions about the computational complexity of strong or weak detection in a coarse sense: if the maximum value of $R^\PP$ is $O(1)$ or $1+o(1)$, this implies that degree-$d$ polynomials cannot achieve\footnote{The formal criterion for ``success'' here is ``separation'' as defined in~\cite[Definition~1.8]{franz-parisi-criterion}.} strong or weak detection (respectively), which heuristically suggests that the detection task requires runtime (roughly) exponential\footnote{For intuition, the runtime to evaluate a degree-$d$ polynomial term-by-term is $N^{O(d)}$, the number of terms.} in $d$. This viewpoint is by now standard and we refer to the survey~\cite{ld-survey} for further exposition.

It is less clear how the LDLR relates to precise error rates in weak detection, particularly the ROC curve. Unlike the LR, the LDLR does not seem to have any \emph{a priori} guarantees about its ROC curve. It is perhaps natural to suspect that thresholding the LDLR should (approximately) produce the best possible ROC curve among all degree-$d$ polynomial threshold tests, but this is not known. In fact, it has proven difficult to analyze the limits of polynomial threshold tests, with a few recent successes~\cite{ptf-ngca,ptf-regular} in regimes not applicable to our additive Gaussian setting.

Our next main result provides some analogue of the Neyman--Pearson lemma for the LDLR: given any algorithm that beats the ROC curve obtained by thresholding the LDLR, we can produce an algorithm for strong detection at a slightly higher SNR. This implies a dichotomy similar to our first result: for a given class of super-polynomial runtimes, the best possible ROC curve is either the one attained by the LDLR, or the perfect one. Again, the reduction incurs a slightly super-polynomial factor in the runtime. This result applies for additive Gaussian models that satisfy two additional properties, which for now we describe informally with the details deferred to \Cref{thm:comp-np}:

\begin{itemize}

\item {\bf Ratio-of-norms bound:} any degree-$d$ polynomial $p: \RR^N \to \RR$ satisfies $\EE_{\PP_\lambda}[p^2] \le \tau \EE_\QQ[p^2]$ for some parameter $\tau = \tau_d$. We will need $d$ to be slowly growing and $\tau$ to scale as $\exp(o(d))$.

\item {\bf Large-set expansion:} in very rough terms, any subset of $\RR^N$ with large enough measure under $\PP_\lambda$ expands to measure nearly 1 when the Gaussian noise operator is applied. Perhaps more instructive than the statement itself is the way in which we will use this assumption: see Step~3 in \Cref{sec:pf-overview}.

\end{itemize}

These two assumptions are satisfied for the spiked Wigner model, as we will show (see \Cref{sec:ratio-of-norms,sec:lse}). Verifying them is not immediate though, and comprises much of the technical work in this paper (\Cref{app:ratio-norms,app:lse}).

\begin{theorem}[Informal, see \Cref{thm:comp-np}]
\label{thm:comp-np-informal}
Consider an additive Gaussian model (\Cref{def:agm}) satisfying the two additional properties mentioned above. Fix constants $0 < \lambda < \lambda'$ and fix an arbitrary sequence $d = d_n \to \infty$. If there is a test computable in time $T$ that beats the ROC curve of the LDLR for testing $\PP_\lambda$ versus $\QQ$, then there is a test computable in time $N^{O(d)} \cdot T$ that strongly distinguishes $\PP_{\lambda'}$ and $\QQ$.
\end{theorem}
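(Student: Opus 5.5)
The plan is a boosting argument in the spirit of Nagda--Raghavendra: weak tests beating the LDLR ROC curve are amplified into a strong test at a higher SNR. Write $L_d$ for the LDLR of $\PP_\lambda$ versus $\QQ$. The first step is to understand the ROC curve of thresholding $L_d$ through the lens of correlation. We would show that any test $A$ achieving $(\alpha,\beta)$ with $\beta$ strictly above the LDLR curve must have a low-degree component that is ``more aligned'' with the planted distribution than $L_d$ allows. Concretely, let $g = \mathbb{1}[A=1]-\alpha$, suitably normalized, and let $g_{\le d}$ be its projection onto degree-$\le d$ polynomials in $L^2(\QQ)$. Since $L_d$ maximizes $R^{\PP_\lambda}$, we have $\EE_{\PP_\lambda}[g_{\le d}] = \langle g, L_d\rangle_\QQ$. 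A test that beats the LDLR curve therefore carries excess planted mass outside the low-degree part: $\EE_{\PP_\lambda}[g]-\langle g,L_d\rangle_\QQ \ge \eps'$. Because $L - L_d$ is orthogonal to low-degree polynomials, this means $A$ correlates by $\Omega(\eps)$ with the high-degree part $L - L_d$.

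The second step is to convert this advantage into something that survives extra noise. We use the Gaussian noise operator $U_\rho$ (Ornstein--Uhlenbeck): adding fresh noise and rescaling maps $\PP_{\lambda'}$ to $\PP_\lambda$ with $\rho=\lambda/\lambda'$. In the Hermite basis, $U_\rho$ damps degree-$k$ components by $\rho^k$. Given a sample from $\PP_{\lambda'}$ or $\QQ$, we would generate $m = N^{O(d)}$ independent re-noised copies $Y^{(1)},\dots,Y^{(m)}$, each exactly distributed as $\PP_\lambda$ (respectively $\QQ$) marginally and conditionally correlated through the common input. We then run $A$ on each copy and average; this explains the $N^{O(d)}\cdot T$ runtime. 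Under $\QQ$ the average concentrates near $\alpha$ up to the low-degree fluctuations. Under $\PP_{\lambda'}$, the average $\frac1m\sum_i A(Y^{(i)})$ approximates $(U_\rho \mathbb{1}_A)(Y)$. The ratio-of-norms bound with $\tau=\exp(o(d))$ controls how the degree-$\le d$ part of $U_\rho \mathbb{1}_A$ behaves under $\PP_{\lambda'}$ versus $\QQ$. The components of degree $>d$ are damped by $\rho^d$, which beats $\tau$, so they are negligible in $L^2$.

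The third step, which I expect to be the main obstacle, is to upgrade the constant-size gap into strong detection. Here large-set expansion enters. Consider the set $S$ of inputs on which the smoothed statistic exceeds its null mean by $\Omega(\eps)$. Steps 1--2 give $\PP_\lambda(S) \ge c(\eps) > 0$. Expansion under the noise operator should then imply that $\PP_{\lambda'}$ places mass $1-o(1)$ on the expanded set, while $\QQ$-concentration of the averaged statistic places mass $o(1)$ there. The delicate point is to reconcile three things: the set on which the high-degree advantage lives, the low-degree part whose $\QQ$-fluctuation must be subtracted off (using $L_d$ explicitly as a correction, computable from the order-$\le d$ moments of $X$), and the exact form of the expansion hypothesis in \Cref{thm:comp-np}. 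I would first try iterating the procedure: apply the boosting step at a sequence of SNRs $\lambda=\lambda_0<\lambda_1<\dots<\lambda_k=\lambda'$, pushing the success probability toward 1 with a constant number of rounds. The final test thresholds the averaged statistic, with thresholds set from the known parameters $\alpha,\eps,\lambda,\lambda',d$.
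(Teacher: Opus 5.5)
There is a genuine gap, and it starts in your first step. With $g=\mathbf{1}[A=1]-\alpha$ you have $\EE_{\PP_\lambda}[g]-\langle g,L_d\rangle_\QQ=\EE_{\PP_\lambda}[A]-\EE_\QQ[AL_d]$, and the hypothesis does not make this positive. You never use the threshold structure of $B$, which is the real content of this step. Even after using it, you only get $\EE_{\PP_\lambda}[A]-\EE_\QQ[AL_d]\ge\eps+\bigl(\EE_{\PP_\lambda}[B]-\EE_\QQ[BL_d]\bigr)$, and the bracket can be negative. This already happens for $\QQ=\cN(0,1)$, $\PP=\cN(\mu,1)$, $d=1$, and thresholds $\{y\ge s\}$ with $s\le 0$. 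The paper instead works with the difference $A-B$. Pointwise $(L_d-t)(A-B)\le 0$, so $\EE_\QQ[L_d(A-B)]\le 0$. Hence $F=L_d+\eps(A-B)$ beats the ratio $\|L_d\|_\QQ$ by $\Omega(\eps^2)$, i.e.\ the high-degree part $q=f_{>d}$ of $f=\EE[F]$ satisfies $\EE_{\PP_\lambda}[q]\ge\Delta\|F\|_\QQ$.

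Your second and third steps then have no mechanism for making the null error small, and they treat the low- and high-degree parts the wrong way round.

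\emph{Why your construction fails.} Averaging $A$ over re-noised copies estimates $U_\rho\mathbf{1}_A(Y)$, whose low-degree part fluctuates by $\Theta(1)$ under $\QQ$. So your set $S$ has constant $\QQ$-mass. Expansion cannot rescue this: since $\EE_\QQ[T_\rho\mathbf{1}_S]=\QQ(S)$, the expanded set $\{T_\rho\mathbf{1}_S\ge\gamma\}$ has null mass at least $\QQ(S)-\gamma$. Nor are the degree-$>d$ components negligible; they are exactly where the advantage lives.

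\emph{The missing idea.} Discard the low-degree part and apply noise only to the high-degree part, $\mu=T_\rho q$. Then $\EE_\QQ[\mu^2]\le\rho^{2d}\|F\|_\QQ^2$, so Markov gives false-positive rate $O(\rho^{2d})$. Meanwhile $T_\rho\PP_{\lambda''}=\PP_\lambda$ keeps $\EE_{\PP_{\lambda''}}[\mu]=\EE_{\PP_\lambda}[q]\ge\Delta\|F\|_\QQ$. The ratio-of-norms bound, applied to $r_{\le d}$ together with boundedness of $r$, gives $\EE_{\PP_{\lambda''}}[\mu^2]=O(\tau^2\|F\|_\QQ^2)$. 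Paley--Zygmund then yields true-positive rate $\Omega(1/\tau^2)=\exp(-o(d))$.

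\emph{Amplification.} This one-sided test is boosted by an OR, not an average, over $m\approx(\gamma\delta)^{-1/2}$ correlated copies $\rho Y+\sqrt{1-\rho^2}G^{(i)}$. A union bound gives null error $m\delta=o(1)$. Under $\PP_{\lambda'}$, the runs are conditionally independent given $Y$, and on the expansion event each succeeds with probability $T_\rho g(Y)\ge\gamma$. The failure probability is therefore at most $\eta+e^{-m\gamma}$. What makes this work is the separation $\delta=\exp(-\Omega(d))=o(\gamma)$. Iterating a constant-gap boost over a ladder of SNRs does not substitute for it.
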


While the low-degree framework has already proved to be widely useful, recent work~\cite{ld-conj-false,ld-fails-subspace,poly-ld-false} has also discovered various counterexamples, meaning choices of $\PP$ and $\QQ$ for which the LDLR does \emph{not} predict the right computational complexity because low-degree polynomials are ``beaten'' by some other efficient algorithm. In a way, our work complements this by giving a rigorous result \emph{in support} of the LDLR. For a well-defined class of testing problems, we show that the LDLR gives rise to the only viable \emph{nontrivial} ROC curve.

We take a moment to clarify how \Cref{thm:comp-np} connects to our first result on the spiked Wigner model (\Cref{thm:main-wig}). One potential avenue to prove our Wigner result would be to show that thresholding the LDLR produces the ROC curve $\phi_\lambda$ and then apply \Cref{thm:comp-np}. We expect this to work in principle, but it appears nontrivial to directly determine the ROC curve of the LDLR in the spiked Wigner model, and it turns out this step can be avoided using an existing approach from~\cite{precise-error}. This approach only requires computing the value of the ratio $R^\PP$ of the LDLR rather than its ROC curve, and it also relies on the known positive result for LSS. We will actually give separate proofs of our two main results (\Cref{thm:main-wig,thm:comp-np}), but these proofs share a few common ingredients that are described in the next section. We view \Cref{thm:comp-np} as being mainly of philosophical importance to the LDLR, whereas if one aims to determine the ROC curve for a specific model, it might be easier to instead follow our proof of \Cref{thm:main-wig}.

\subsection{Proof overview}
\label{sec:pf-overview}

The proofs of our two main results share some modular components that can be broken down into three steps. We give an overview below, with the full details of these components in \Cref{sec:general}.

\paragraph{Step 1: from ROC to ratio.}

For both of our results, we begin with a hypothetical test that achieves some point above the ROC curve in question (either that of LSS or LDLR). The goal is to ``boost'' the success probability and achieve strong detection. The first step is to produce a (possibly randomized) ``score'' function $F \randmap \RR^N \to \RR$ that achieves a larger value than the LDLR for the ratio
\begin{equation}\label{eq:ratio}
R^{\PP}(F) := \frac{\EE_{\PP}[F]}{\sqrt{\EE_\QQ[F^2]}} \end{equation}
mentioned above. Equivalently, $F$ achieves a larger ratio than any degree-$d$ polynomial, where $d = d_n$ is some slowly-growing parameter.

The construction of $F$ is the main step at which our two proofs deviate. For the spiked Wigner result, this step already follows from prior work~\cite{precise-error}, which we elaborate on in \Cref{sec:prior-work}. For the computational Neyman--Pearson lemma, we give a direct proof for this step in \Cref{sec:roc-to-ratio}, showing that if you can beat the ROC curve of the LDLR then you can beat the ratio of the LDLR.

\paragraph{Step 2: from ratio to one-sided test.}

Now we have a function $f$ (assume for simplicity it is deterministic) that slightly beats the ratio of the LDLR. The next step is to boost this and achieve a much larger ratio of value $\exp(\Omega(d))$. This step is the heart of the proof and it uses a core idea from~\cite{NR-clique}, which we now describe. Since $f$ has a better ratio than any low-degree polynomial, its ``high-degree'' component $f_{>d}$ must itself achieve a nontrivial ratio: $R^\PP(f_{>d}) = \Omega(1)$. Now apply the Gaussian noise operator to produce a ``noisy'' version of the function $f_{>d}$, called $\mu$, and consider the ratio $R^\PP(\mu)$ as compared to $R^\PP(f_{>d})$. In the numerator, the effect of noise is $\EE_{\PP_{\lambda'}}[\mu] = \EE_{\PP_\lambda}[f_{>d}]$ where $\lambda < \lambda'$, so this change can be absorbed by incurring a small loss in SNR. Crucially, the denominator undergoes a dramatic change because noise attenuates high-degree functions, and this boosts the ratio by a factor $\exp(\Omega(d))$.

Now we have achieved a very large value for the ratio. However, a large ratio does not in general imply a good test because the numerator might only be large due to a rare event. By analyzing the first two moments of $\mu$, we are able to show that thresholding $\mu$ produces a test with a nontrivial one-sided guarantee:
\[ \QQ(A=1) = \exp(-\Omega(d)) \qquad\text{and}\qquad \PP(A=1) = \exp(-o(d)). \]
In words, the guarantee under $\QQ$ is strong: false positives are very rare. Meanwhile, the guarantee under $\PP$ is weak because true positives are also rare, but crucially they are less rare than false positives. The moment bounds that go into this claim require a few ingredients, namely the ratio-of-norms property (see \Cref{sec:comp-np}) along with some regularity properties for $f$ (which are deduced from the way $f$ was originally constructed).

\paragraph{Step 3: from one-sided to two-sided test.}

The final step is to turn our one-sided test into a two-sided test, where both type I and II error probabilities are $o(1)$. To boost the true positive rate, we will need to run the one-sided test on many different inputs in the hope that one of these runs will output 1. How can we produce many different inputs, given that we are only provided with one? A first attempt is to use \emph{Gaussian cloning} (e.g.~\cite[Section~10]{BBH-reduction} or~\cite[Appendix~A.2]{ld-notes}) to produce $m$ independent copies of the input, but each copy would suffer a factor of $\sqrt{m}$ in the SNR. Since we are only willing to lose a $1+\eps$ factor in the SNR, we instead must produce $m$ highly-correlated copies. The key technical challenge is to prove that, even though the copies are not independent, there is a very high chance that the one-sided test will output 1 on at least one copy. We use the large-set expansion property (see \Cref{sec:comp-np}) here, and in fact much of the heavy lifting is done by that property, which was designed exactly for this purpose.

\paragraph{Verifying the assumptions.}

There is one last substantial component of our proofs that we have not yet discussed: how to verify the two assumptions --- ratio-of-norms bound and large-set expansion --- for the spiked Wigner model. We defer this discussion to \Cref{sec:ratio-of-norms,sec:lse}, respectively. Our assumption that $\pi$ has finite support is used in the proof of large-set expansion.

\subsection{Relation to prior work}
\label{sec:prior-work}

The prior work that we build on most closely is~\cite{precise-error} and~\cite{NR-clique}. We give a detailed comparison here.

\paragraph{Comparison to Moitra--Wein~\cite{precise-error}.}

The topic of our first result --- the computational ROC curve for spiked Wigner --- was first studied by~\cite{precise-error}, arriving at the same conclusion as us (optimality of LSS) but conditional on a ``strong'' version of the low-degree conjecture. Our work can be viewed as a strengthening of~\cite{precise-error} that upgrades the conjecture to a more ``standard'' one. The core contribution of~\cite{precise-error} is a connection between the ROC curve and the ratio~\eqref{eq:ratio}, which we also use in our proof of \Cref{thm:main-wig}.

The main technical component of~\cite{precise-error} is the following: suppose we have a family of $T$-time tests achieving a particular ROC curve $\phi$ (subject to some regularity assumptions) along with a $T$-time test achieving one point $(\alpha,\beta)$ above the curve $\phi$. Then this can be transformed into an $O(T)$-time score function $F \randmap \RR^N \to \RR$ whose ratio exceeds the ``value'' of the curve:
\[ R^\PP(F) > \mathrm{val}(\phi) := \sqrt{\int_0^1 (\phi'(t))^2 dt}. \]
In the spiked Wigner model with $\lambda < 1$, the LSS ROC curve satisfies $\mathrm{val}(\phi_\lambda) = (1-\lambda^2)^{-1/4}$, which exactly coincides with the asymptotic value of the LDLR's ratio. As a result,~\cite{precise-error} provides Step~1 of our proof overview in \Cref{sec:pf-overview}.

The main conclusion of~\cite{precise-error} is optimality of LSS, conditional on a new ``strong'' variant of the low-degree conjecture. Specializing to the spiked Wigner setting with $\lambda < 1$, the standard low-degree conjecture asserts that strong detection is hard because the LDLR ratio is $O(1)$, while the strong low-degree conjecture asserts that it is hard to compute a score function $F$ whose ratio beats the precise value of the LDLR ratio, $(1-\lambda^2)^{-1/4}$. However, the strong conjecture turns out to be false: it was refuted by Ansh Nagda in 2025. The issue is that a score function can exploit rare events to make the ratio extremely large. A ``fix'' is proposed in~\cite[arXiv~v3]{precise-error}, where the conjecture is refined so that the score function must output values in $[1/B,B]$ for a constant $B \ge 1$. The original conclusion of~\cite{precise-error} still holds under this refined conjecture, although this series of events sheds doubt on whether any version of the strong conjecture is true at all.

Our work puts things on more solid ground, relying only on the more established conjecture about strong detection, or alternatively, establishing a rigorous connection between the strong and weak detection questions. We also vindicate the refined version of the strong low-degree conjecture mentioned above, showing that it follows from the established conjecture on strong detection. In other words, the standard low-degree conjecture implies the (refined) strong low-degree conjecture. See~\Cref{prop:main-reduction-wigner}, which essentially states that if the refined strong conjecture were to fail, this could be used to build a strong distinguisher for some $\lambda < 1$. Our proof crucially uses the boundedness $F(y) \in [1/B,B]$ in \Cref{sec:ratio-to-one-sided}, so we do not similarly vindicate the original falsified strong conjecture.

\paragraph{Comparison to Nagda--Raghavendra~\cite{NR-clique}.}

The work of~\cite{NR-clique} uses self-reducibility to amplify hardness in the planted clique model, following a line of inquiry pursued by~\cite{self-amp-1,self-amp-2}. The blueprint for Steps~2 and~3 in our proof overview (\Cref{sec:pf-overview}) is based on~\cite{NR-clique}, especially the key idea of boosting a function's ratio by applying noise to its high-degree part. While our result is also a self-reduction, the context differs from~\cite{NR-clique}: they measure success in terms of the \emph{advantage} over a random guess (a single number) in a regime where the advantage is vanishing, whereas we measure success in terms of the ROC curve in a regime where this curve converges to some nontrivial tradeoff. We will now clarify the main differences between our proof and that of~\cite{NR-clique}.

One of the more technical parts of~\cite{NR-clique} is the anticoncentration result in Section~6, which is used when turning a large ratio $R^\PP$ into a one-sided distinguisher. In our proof (Step~2), we have replaced this part of the argument by a simpler and more direct application of Paley--Zygmund. We expect this same idea can be used to substantially simplify the proof of~\cite{NR-clique}.

Furthermore, a few aspects of our setting create technical challenges that were not present in the planted clique setting of~\cite{NR-clique}. First, the ratio-of-norms bound --- $\EE_{\PP}[p^2] \le \tau \EE_\QQ[p^2]$ for all degree-$d$ polynomials $p$ --- from \Cref{sec:comp-np} is an important ingredient both in our proof and that of~\cite{NR-clique}. In the planted clique setting, the ratio $R^\PP$ achieved by the LDLR (or equivalently, the norm of the LDLR; see~\Cref{sec:background-ldlr}) is $1+o(1)$. By a simple and generic argument using the hypercontractive inequality, this implies the ratio-of-norms bound with $\tau = 1+o(1)$ and $d$ slowly growing; see~\cite[Equations (11), (12)]{NR-clique}. In contrast, we are in a scenario where the LDLR ratio converges to some constant strictly greater than 1. We are not aware of any generic argument to bound the ratio of norms in such a case, and we instead give a proof that uses the specifics of the spiked Wigner model. This is one of the more involved arguments in our paper; see \Cref{sec:ratio-of-norms} and \Cref{app:ratio-norms}.

An additional difficulty in our setting is that we care about exact constants in the SNR, so we are only allowed to lose a $1+\eps$ factor in $\lambda$ when boosting a one-sided distinguisher to a two-sided one. In contrast, the planted clique setting of~\cite{NR-clique} has the leeway to lose an $n^\eps$ factor in the clique size. For us, this means we cannot use the trivial boosting strategy --- producing many \emph{independent} copies of the input --- and instead must analyze the one-sided test when run on many \emph{highly-correlated} inputs. This creates the need for the large-set expansion property, and verifying this property for the spiked Wigner model is another of the more involved arguments in our paper; see \Cref{sec:lse} and \Cref{app:lse}.

\subsection{Discussion and future directions}

The arguments we present in this work are fairly generic and may be useful in other settings to determine the computational ROC curve. We note, however, that in some models --- like tensor PCA (e.g.~\cite{kikuchi,kikuchi-smooth}) --- it is possible to increase statistical power by increasing runtime in a smooth fashion. This differs from our spiked Wigner setting, where all runtimes between polynomial and exponential have effectively the same statistical power. As a result, there does not seem to be a meaningful computational question to ask about weak detection in models like tensor PCA, because weak detection can always be boosted to strong detection at the expense of a slight increase in runtime.

One feature of the spiked Wigner problem that was crucial for us is the null distribution with independent coordinates. It is less clear whether our approach could be extended to more complex testing problems, such as planted-versus-planted (e.g.~\cite{count-find,almost-orthonormal,sharp-pvp}) or random regular graphs~\cite{spectral-planting,detecting-lifts,ptf-regular}. A related question in the spiked Wigner model is that of estimating the spike when $\lambda > 1$. The estimation question does not seem to have quite the same dichotomy that we have discovered for testing, since the statistical mean squared error (MSE) may lie strictly between the computational MSE and the ``perfect'' MSE (e.g.~\cite{LKZ-mmse}).

There is an extensive body of work on \emph{average-case reductions} that formally relate different statistical models to each other. See, for instance,~\cite{BBH-reduction,secret-leakage,wigner-wishart} and references within. One might hope to connect our spiked Wigner result to the existing web of reductions, showing that LSS is optimal in the spiked Wigner model conditional on (say) the planted clique hypothesis. The current difficulty with this is that existing reduction-based hardness for the spiked Wigner model does not reach all the way to the sharp threshold $\lambda = 1$.

\subsection{Organization}

\Cref{sec:background} establishes some notation, conventions, and background that will be used throughout. \Cref{sec:general} gives the full details for the three modular components of our reductions from the overview in \Cref{sec:pf-overview}. \Cref{sec:comp-np-formal} gives the formal statement and proof of our computational Neyman--Pearson lemma. \Cref{sec:wigner-application} proves \Cref{thm:main-wig}, the result for the spiked Wigner model, with some technical ingredients deferred to \Cref{app:ratio-norms,app:lse}.

\section{Notation and Background}
\label{sec:background}

\subsection{Asymptotic notation}

We will often consider a sequence of problems indexed by $n$, with some parameters depending on $n$ (e.g.\ $N = N_n$, $X = X_n$) and others designated as fixed ``constants'' which do not depend on $n$ (e.g.\ $\lambda, \pi$). Asymptotic notation like $o(\cdot), O(\cdot), \Omega(\cdot)$ generally pertains to the limit $n \to \infty$, meaning there may be hidden factors depending on constants like $\lambda,\pi$. On the other hand, an ``absolute'' constant means a specific number that does not depend on any parameters.

\subsection{Randomized functions}
\label{sec:background-randf}

We use the notation $F \randmap A\to B$ for a randomized function from $A$ to $B$. This means the output $b \in B$ may depend on both the input $a \in A$ and some internal random seed $\omega \in \Omega$. The seed $\omega$ is drawn from some prescribed distribution over a set $\Omega$, which we write as $\omega \sim \Omega$. We will sometimes write the output as $F_\omega(a)$ to make $\omega$ explicit. Note that every (deterministic) function $f:A\to B$ can be interpreted as a randomized function. In this paper, all functions (randomized or not) are always assumed to be measurable so that we can define probabilities such as $\PP(F \ge t)$; this means the probability over both $Y \sim \PP$ and $\omega \sim \Omega$ of the event $\{F_\omega(Y) \ge t\}$. Similarly, $\EE_\PP[F] := \EE_{Y \sim \PP} \, \EE_{\omega \sim \Omega}\,[F_\omega(Y)]$.

\subsection{Model of computation}
\label{sec:background-compmodel}

We are dealing with computational problems with real-valued inputs, and we will present algorithms in ``exact'' terms, without modeling the round-off errors that would occur on a digital computer. This is a standard abstraction used in the field of average-case reductions (e.g.~\cite{secret-leakage} and references therein). In effect, we are working in a \emph{real RAM} model of computation (see~\cite{BSS}), where an arbitrary real number can be stored in a single memory cell, and basic arithmetic or comparisons can be done in a single operation. However, we will also use standard functions such as $x \mapsto \sqrt{x}$ that are not usually permitted in the real RAM model. For the algorithms described in this paper, we feel it is reasonably clear that there will be no inherent issues with bit complexity or numerical stability, so for instance, the square-root function could be replaced by a polynomial approximation. In principle, we believe our algorithms could be analyzed in finite-precision arithmetic, but this would be very tedious.

We will sometimes deal with randomized algorithms that compute randomized functions. We assume such algorithms have the ability to draw exact samples from the standard Gaussian distribution $\cN(0,1)$. When we speak of an algorithm of runtime $T$, this may stand for a class of runtimes such as $O(n^2)$ or $n^{O(1)}$. Randomized algorithms must \emph{always} (with probability 1) obey this runtime bound. We will generally assume our algorithms have access to parameters such as $\lambda,d$; these can be thought of as additional inputs to the algorithm (or for constants like $\lambda$, they can simply be ``hard-coded'' into the algorithm).

Perhaps the most sophisticated computation used in this paper are eigenvalue computations appearing in the LSS statistic $\sum_i f_\lambda(\mu_i)$ from~\eqref{eq:lss}. However, it is possible to compute the LSS statistic to high accuracy without computing all the eigenvalues $\mu_i$ of $Y$ individually. To do this, compute $\Tr(g(Y))$ where $g$ is a polynomial approximation to $f_\lambda$. The approximation only needs to be accurate on $[-2-\eps,2+\eps]$ for a constant $\eps>0$, as all the eigenvalues lie in this interval with high probability (when $\lambda \le 1$).

\subsection{Gaussian space and Hermite polynomials}
\label{sec:background-gaussian}

We refer to~\cite[Ch~11]{odonnell-book} as a standard reference for the following material. For functions $f,g : \RR^N \to \RR$ and a distribution $\PP$ over $\RR^N$, define the inner product and norm
\[ \langle f,g \rangle_\PP := \EE_\PP[fg] = \EE_{Y \sim \PP}[f(Y)g(Y)], \qquad \|f\|_\PP^2 := \langle f,f \rangle_\PP. \]
Let $L^2(\PP)$ denote the space of functions $f : \RR^N \to \RR$ such that $\|f\|_\PP < \infty$, equipped with the above norm and inner product. For a randomized function $F = F_\omega \randmap \RR^N \to \RR$ with random seed $\omega \sim \Omega$, we may similarly use the notation $\|F\|_\PP^2 := \EE_\PP[F^2] = \EE_{Y \sim \PP} \, \EE_{\omega \sim \Omega} \, [F_\omega(Y)^2]$.

An important special case is the standard Gaussian distribution $\QQ = \cN(0,I_N)$. The \emph{Hermite polynomials}, $\{h_k\}_{k \in \NN}$ where $\NN := \{0,1,2,\ldots\}$, provide an orthogonal basis for the polynomials $\RR \to \RR$ with respect to $\cN(0,1)$. We will use the \emph{orthonormal} variant, that is, $\EE_{z \sim \cN(0,1)}[h_k h_\ell] = \mathbf{1}\{k=\ell\}$. Explicitly, these polynomials can be constructed via the recurrence
\[ h_0(z) = 1, \qquad h_1(z) = z, \qquad h_{k+1}(z) = \frac{z}{\sqrt{k+1}} \, h_k(z) - \sqrt{\frac{k}{k+1}} \, h_{k-1}(z) \; \text{ for } \; k \ge 1. \]
Note that $\deg(h_k) = k$. For multivariate polynomials $\RR^N \to \RR$, the \emph{multivariate Hermite polynomials} provide an orthonormal basis with respect to $\cN(0,I_N)$. These are indexed by $\alpha \in \NN^N$ and constructed by taking products of the univariate Hermite polynomials:
\[ h_\alpha(z) := \prod_{i=1}^N h_{\alpha_i}(z_i). \]
For $\alpha \in \NN^N$ we write $|\alpha| := \sum_{i \in [N]} \alpha_i$ where $[N] := \{1,2,3\ldots,N\}$. Now $\{h_\alpha \,:\, |\alpha| \le d\}$ forms a basis for the degree-$\le d$ polynomials $\RR^N \to \RR$, and this basis is orthonormal with respect to $\langle \cdot,\cdot \rangle_\QQ$, i.e., $\EE_{z \sim \cN(0,I_N)}[h_\alpha h_\beta] = \mathbf{1}\{\alpha=\beta\}$. In particular, since $h_0$ is the constant polynomial $1$, we have $\EE_{z \sim \cN(0,I_N)}[h_\alpha] = 0$ for all $|\alpha| \ge 1$.

For a vector $x \in \RR^N$ and multi-indices $\alpha,\beta \in \NN^N$, we write
\[ x^\alpha := \prod_{i \in [N]} x_i^{\alpha_i}, \qquad \alpha! := \prod_{i \in [N]} \alpha_i!, \qquad \binom{\alpha}{\beta} := \prod_{i \in [N]} \binom{\alpha_i}{\beta_i}. \]
Inequalities between multi-indices are interpreted entrywise: $\beta \le \alpha$ means $\beta_i \le \alpha_i$ for all $i \in [N]$. The following useful formula is standard; see e.g.~\cite[Proposition~3.1]{schramm-wein-estimation}.

\begin{proposition}[Shifted Hermite expansion]
\label{prop:hermite-shift}
For any $\alpha \in \NN^N$ and any $x,z \in \RR^N$,
\[
h_\alpha(x+z)
= \sum_{\beta \le \alpha}
\sqrt{\frac{\beta!}{\alpha!}}
\binom{\alpha}{\beta}
x^{\alpha-\beta} h_\beta(z).
\]
\end{proposition}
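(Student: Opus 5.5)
The plan is to reduce to the univariate case via the product structure, and then establish the univariate identity with a generating function. First, the multivariate statement follows from the univariate one. Both sides factor over coordinates: $h_\alpha(x+z)=\prod_i h_{\alpha_i}(x_i+z_i)$, and the right-hand side is a sum over $\beta\le\alpha$ of products of coordinatewise factors, namely $\sqrt{\beta_i!/\alpha_i!}\binom{\alpha_i}{\beta_i}x_i^{\alpha_i-\beta_i}h_{\beta_i}(z_i)$. Such a sum over the box $\{\beta\le\alpha\}$ equals the product over $i$ of the coordinatewise sums over $\beta_i\in\{0,\dots,\alpha_i\}$. It therefore suffices to prove, for $k\in\NN$ and $x,z\in\RR$,
\[ h_k(x+z)=\sum_{j=0}^k \sqrt{\tfrac{j!}{k!}}\binom{k}{j}x^{k-j}h_j(z). \]

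For the univariate identity I will work with the unnormalized (probabilists') Hermite polynomials $He_k:=\sqrt{k!}\,h_k$. These satisfy $He_0=1$, $He_1=z$, and $He_{k+1}=zHe_k-kHe_{k-1}$, which follows directly from the recurrence for $h_k$ by multiplying through by $\sqrt{(k+1)!}$. I will first verify that they have the generating function $G(z,t):=\sum_k He_k(z)t^k/k! = e^{zt-t^2/2}$. To do this, observe that $G$ satisfies $\partial_t G=(z-t)G$. Comparing coefficients of $t^k/k!$ on both sides gives exactly the recurrence $He_{k+1}=zHe_k-kHe_{k-1}$ together with the same initial values, so the two series agree.

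The shift identity then follows from the factorization
\[ G(x+z,t)=e^{xt}\,G(z,t). \]
Extracting the coefficient of $t^k/k!$ on both sides gives
\[ He_k(x+z)=\sum_{j}\binom{k}{j}x^{k-j}He_j(z). \]
Dividing by $\sqrt{k!}$ and substituting $He_j=\sqrt{j!}\,h_j$ yields precisely the coefficient $\sqrt{j!/k!}\binom{k}{j}$.

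There is no real obstacle here. The only care needed is in the bookkeeping of the normalization, namely converting between $h_k$ and $He_k$ and confirming the generating function from the stated recurrence. Everything else is formal power-series manipulation, which is legitimate for all real $x,z,t$ because these are entire functions.
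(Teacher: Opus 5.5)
Your proof is correct and complete. The coordinatewise factorization over the box $\{\beta\le\alpha\}$ is valid, and the rescaled recurrence $He_{k+1}=zHe_k-kHe_{k-1}$ for $He_k=\sqrt{k!}\,h_k$ checks out. Extracting the coefficient of $t^k/k!$ from $e^{(x+z)t-t^2/2}=e^{xt}e^{zt-t^2/2}$ gives the identity with exactly the stated factor $\sqrt{j!/k!}\binom{k}{j}$.

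The paper does not prove this proposition itself; it cites it as standard (Proposition~3.1 of Schramm--Wein). So your generating-function argument is simply a self-contained derivation of a fact the paper imports, and it is the standard one.
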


For $\rho \in (0,1)$, the \emph{Gaussian noise operator} $T_\rho$ will play a central role for us. For a distribution $\PP$ on $\RR^N$, we write $T_\rho \PP$ for the law of $\rho Y + \sqrt{1-\rho^2}G$ where $Y\sim \PP$ and $G\sim \cN(0,I_N)$ are independent. Note that for an additive Gaussian model (\Cref{def:agm}) with $\rho = \lambda/\lambda' \in (0,1)$, we have $T_\rho \PP_{\lambda'} = \PP_\lambda$. For a function $g: \RR^N \to \RR$, $T_\rho g$ is another function $\RR^N \to \RR$ defined as
\[ T_\rho g(y) := \EE_{G \sim \cN(0,I_N)}\!\left[g(\rho y + \sqrt{1-\rho^2}G)\right]. \]
The noise operator interacts nicely with the Hermite basis. If $p$ is a degree-$\le d$ polynomial with Hermite expansion $p = \sum_{|\alpha|\le d} \hat p_\alpha h_\alpha$ for some coefficients $\hat{p}_\alpha \in \RR$, then
\begin{equation}\label{eq:T-hermite}
T_\rho p = \sum_{|\alpha|\le d} \rho^{|\alpha|} \hat p_\alpha h_\alpha.
\end{equation}

Finally, a standard ``hypercontractive'' inequality on Gaussian space relates the 2-norm and 4-norm of a low-degree polynomial; see~\cite{odonnell-book}, Theorem~9.21 together with the discussion in Section~11.1 (Gaussians can be simulated as a sum of Rademachers).

\begin{proposition}[\cite{bonami,beckner,gross-logsob}]
\label{prop:gauss-bonami}
For a polynomial $f: \RR^N \to \RR$ of degree $\le d$ with input from $\QQ = \cN(0,I_N)$, 
\[ \EE_\QQ[f^4] \le 9^d \left(\EE_\QQ[f^2]\right)^2. \]
\end{proposition}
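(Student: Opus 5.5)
The plan is to prove the Boolean version first (the Bonami lemma) and then transfer it to Gaussian space by a central-limit argument, following the route indicated in the reference to~\cite{odonnell-book}.

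\emph{Boolean step.} Let $g:\{\pm1\}^M\to\RR$ be a multilinear polynomial of degree $\le d$ in uniform random signs $x_1,\dots,x_M$. I will show $\EE[g^4]\le 9^d(\EE[g^2])^2$ by induction on $M$; the case $M=0$ is trivial. For the inductive step, write $g = a + x_M b$, where $a,b$ do not depend on $x_M$, $\deg a\le d$ and $\deg b\le d-1$. Expanding $(a+x_Mb)^4$, the odd powers of $x_M$ vanish in expectation, which gives
\[ \EE[g^4]=\EE[a^4]+6\,\EE[a^2b^2]+\EE[b^4]. \]
Cauchy--Schwarz gives $\EE[a^2b^2]\le\sqrt{\EE a^4}\sqrt{\EE b^4}$. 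Applying the induction hypothesis to $a$ (degree $d$) and to $b$ (degree $d-1$) bounds the right-hand side by
\[ 9^d(\EE a^2)^2+6\cdot 3^d\,3^{d-1}\,\EE a^2\,\EE b^2+9^{d-1}(\EE b^2)^2 . \]
Since $6\cdot 3^{2d-1}=2\cdot 9^d$ and $9^{d-1}\le 9^d$, this is at most $9^d(\EE a^2+\EE b^2)^2=9^d(\EE g^2)^2$, using orthogonality $\EE g^2=\EE a^2+\EE b^2$.

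\emph{Transfer to Gaussians.} Given $f:\RR^N\to\RR$ of degree $\le d$, fix $M$ and set $z_i^{(M)}=M^{-1/2}\sum_{j=1}^M x_{ij}$, where the $x_{ij}$ are independent Rademachers. The composition $f(z^{(M)})$ is a polynomial of degree $\le d$ in the $x_{ij}$. Using $x_{ij}^2=1$, it reduces on the cube to a multilinear polynomial of degree $\le d$, so the Boolean step gives
\[ \EE[f(z^{(M)})^4]\le 9^d\bigl(\EE[f(z^{(M)})^2]\bigr)^2 . \]
Both $f^2$ and $f^4$ are fixed polynomials in $z$. Each moment $\EE[\prod_i (z_i^{(M)})^{k_i}]$ converges to the corresponding Gaussian moment as $M\to\infty$, because the coordinates are independent and each univariate moment of a normalized Rademacher sum converges to the Gaussian moment (a direct combinatorial count, or moment convergence in the CLT with uniformly bounded higher moments). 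Letting $M\to\infty$ on both sides then yields the claim.

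\emph{Main obstacle.} I expect the only delicate point to be justifying the moment convergence in the limit $M\to\infty$; the inductive inequality itself is routine. As an alternative route, I would invoke Gaussian hypercontractivity $\|T_\rho g\|_4\le\|g\|_2$ for $\rho=1/\sqrt3$. Writing $f=T_\rho g$ with $g=\sum_\alpha\rho^{-|\alpha|}\hat f_\alpha h_\alpha$, equation~\eqref{eq:T-hermite} gives $\|g\|_2\le 3^{d/2}\|f\|_2$. Hence $\|f\|_4\le 3^{d/2}\|f\|_2$, and raising to the fourth power gives the factor $9^d$.
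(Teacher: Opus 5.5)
Your proposal is correct and takes essentially the same approach as the paper. The paper gives no proof of its own; it cites the Bonami lemma (Theorem~9.21 in \cite{odonnell-book}, whose inductive proof via $g = a + x_M b$ you reproduce) together with the simulation of Gaussians by normalized Rademacher sums from Section~11.1 there, which is your moment-convergence transfer. Your alternative route via Gaussian hypercontractivity with $\rho = 1/\sqrt{3}$ is also valid.
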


\subsection{Low-degree likelihood ratio}
\label{sec:background-ldlr}

We now give a formal introduction to the low-degree likelihood ratio (LDLR), which was discussed earlier. We refer to e.g.~\cite{hopkins-thesis,ld-notes,ld-survey} for further exposition.

\begin{definition}[Low-degree likelihood ratio]
\label{def:ldlr}
Let $\QQ = \cN(0,I_N)$ and let $\PP$ be a distribution on $\RR^N$ with finite moments of all orders. For an integer $d \ge 0$, let $\Pi_{\le d}: L^2(\QQ) \to L^2(\QQ)$ denote orthogonal projection in $L^2(\QQ)$ onto the subspace of degree-$\le d$ polynomials. The degree-$d$ likelihood ratio $L^\PP_{\le d}$ is traditionally defined as the best degree-$d$ approximation to the likelihood ratio $L^\PP := d\PP/d\QQ$ in the $L^2$ sense, namely
\begin{equation}\label{eq:ldlr-derivation}
L_{\le d}^\PP = \Pi_{\le d}(L^\PP) = \sum_{|\alpha| \le d} \langle L^\PP, h_\alpha \rangle_\QQ \, h_\alpha = \sum_{|\alpha| \le d} \EE_\PP[h_\alpha] h_\alpha.
\end{equation}
In some cases, $L^\PP$ may not exist or may not belong to $L^2(\QQ)$, making some steps above appear dubious. However, the right-hand side of~\eqref{eq:ldlr-derivation} remains well-defined because $\PP$ has finite moments, so we will take this as the formal definition:
\begin{equation}\label{eq:ldlr}
L_{\le d}^\PP := \sum_{|\alpha| \le d} \EE_\PP[h_\alpha] h_\alpha.
\end{equation}
Thus, $L^\PP_{\le d}$ is guaranteed to exist and belong to $L^2(\QQ)$, even if $L^\PP$ does not.
\end{definition}

The following basic properties are well known, but we include their proof for completeness.

\begin{proposition}[Basic properties of LDLR]
\label{prop:ldlr-facts}
In the setting of \Cref{def:ldlr}, the degree-$d$ likelihood ratio $L^\PP_{\le d}$, as defined in~\eqref{eq:ldlr}, satisfies:
\begin{enumerate}[label=(\roman*)]
    \item \label{item:ldlr-1} For any degree-$\le d$ polynomial $f: \RR^N \to \RR$, $\langle f,L^\PP_{\le d} \rangle_\QQ = \EE_\PP[f]$.
    \item \label{item:ldlr-2} The maximum value of the ratio
    \[ R^\PP(f) := \frac{\EE_\PP[f]}{\|f\|_\QQ} \]
    over (non-identically zero) degree-$\le d$ polynomials $f: \RR^N \to \RR$ is $\|L^\PP_{\le d}\|_\QQ$, achieved by $f = L^\PP_{\le d}$. The optimizer is unique up to scalar multiple.
    \item \label{item:ldlr-3} The norm of the LDLR can be computed as
    \[ \|L^\PP_{\le d}\|^2_\QQ = \sum_{|\alpha| \le d} (\EE_\PP[h_\alpha])^2. \]
    \item \label{item:ldlr-4} In particular, $\|L^\PP_{\le d}\|_\QQ \ge 1$.
\end{enumerate}
\end{proposition}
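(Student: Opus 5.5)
The plan is to prove the four items in order, each resting on the orthonormality of the Hermite basis $\{h_\alpha\}$ in $L^2(\QQ)$ recalled in \Cref{sec:background-gaussian}. Two preliminary facts are needed. First, every degree-$\le d$ polynomial $f$ has a finite Hermite expansion $f = \sum_{|\alpha|\le d}\hat f_\alpha h_\alpha$ with $\hat f_\alpha = \langle f,h_\alpha\rangle_\QQ$. Second, because $\PP$ has finite moments, $\EE_\PP[f]$ is well-defined and linear in $f$.

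For \ref{item:ldlr-1}, expand $f$ in the Hermite basis and use linearity of expectation under $\PP$:
\[ \EE_\PP[f] = \sum_{|\alpha|\le d}\hat f_\alpha\,\EE_\PP[h_\alpha]. \]
On the other side, orthonormality applied to the definition~\eqref{eq:ldlr} gives
\[ \langle f, L^\PP_{\le d}\rangle_\QQ = \sum_{|\alpha|\le d}\hat f_\alpha\,\EE_\PP[h_\alpha]. \]
The two expressions agree, which proves \ref{item:ldlr-1}.

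For \ref{item:ldlr-3}, take the squared norm of~\eqref{eq:ldlr} and apply orthonormality (Parseval) directly; this yields $\sum_{|\alpha|\le d}(\EE_\PP[h_\alpha])^2$.

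For \ref{item:ldlr-2}, use \ref{item:ldlr-1} to write $R^\PP(f) = \langle f, L^\PP_{\le d}\rangle_\QQ/\|f\|_\QQ$. Cauchy--Schwarz in $L^2(\QQ)$ then bounds this by $\|L^\PP_{\le d}\|_\QQ$. Equality holds exactly when $f$ is a positive scalar multiple of $L^\PP_{\le d}$; note that $L^\PP_{\le d}$ is itself a nonzero degree-$\le d$ polynomial, as the next item shows. One detail needs care: a nonzero polynomial must have nonzero $\|f\|_\QQ$, so the ratio is defined. This holds because a polynomial vanishing $\QQ$-almost everywhere is identically zero, or equivalently because its Hermite coefficients are all zero. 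For uniqueness, "up to scalar multiple" should be read as a positive scalar, since a negative multiple gives ratio $-\|L^\PP_{\le d}\|_\QQ$.

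For \ref{item:ldlr-4}, note that the $\alpha=0$ term in \ref{item:ldlr-3} is $(\EE_\PP[h_0])^2 = 1$ and every other term is nonnegative, so $\|L^\PP_{\le d}\|_\QQ \ge 1$. The same observation shows $L^\PP_{\le d}\neq 0$, which is what \ref{item:ldlr-2} needed.

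There is no real obstacle here. The only subtle point is the one already handled in \ref{item:ldlr-2}: justifying that nonzero polynomials have positive $L^2(\QQ)$ norm, and stating the uniqueness in terms of positive scalars.
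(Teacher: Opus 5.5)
Your proposal is correct and follows essentially the same route as the paper. Items (i) and (iii) use the Hermite expansion with orthonormality, and (iv) uses the $\alpha=0$ term. For (ii), the paper applies Cauchy--Schwarz to the coefficient vectors $\hat f$ and $c_\alpha = \EE_\PP[h_\alpha]$, which via Parseval is the same as your $L^2(\QQ)$ version. Your remark that uniqueness should be read as up to a \emph{positive} scalar is a correct sharpening of the paper's phrase ``nonzero scalar multiple.''
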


\begin{proof}
Write $f = \sum_{|\alpha| \le d} \hat{f}_\alpha h_\alpha$ for the Hermite expansion of a degree-$\le d$ polynomial $f$.

\ref{item:ldlr-1} Expanding in the orthonormal Hermite basis,
\[ \langle f,L^\PP_{\le d} \rangle_\QQ = \left\langle \sum_{|\alpha| \le d} \hat{f}_\alpha h_\alpha \, , \sum_{|\beta| \le d} \EE_\PP[h_\beta] h_\beta \right\rangle_{\!\!\QQ} = \sum_{|\alpha| \le d} \sum_{|\beta| \le d} \hat{f}_\alpha \, \EE_\PP[h_\beta] \langle h_\alpha,h_\beta \rangle_\QQ = \sum_{|\alpha| \le d} \hat{f}_\alpha \, \EE_\PP[h_\alpha] = \EE_\PP[f]. \]

\ref{item:ldlr-2} Again expand in the Hermite basis, and write $\hat{f} = (\hat{f}_\alpha)_{|\alpha| \le d}$ for the vector of coefficients. Note that
\begin{equation}\label{eq:parceval}
\|f\|_\QQ^2 = \sum_{|\alpha| \le d} \sum_{|\beta| \le d} \hat{f}_\alpha \hat{f}_\beta \, \EE_\QQ[h_\alpha h_\beta] = \sum_{|\alpha| \le d} \hat{f}_\alpha^2 = \|\hat{f}\|_2^2.
\end{equation}
Now
\[ R^\PP(f) = \frac{\sum_{|\alpha| \le d} \hat{f}_\alpha \, \EE_\PP[h_\alpha]}{\|\hat{f}\|_2} = \frac{\langle \hat{f},c \rangle}{\|\hat{f}\|_2}, \]
where $c = (c_\alpha)_{|\alpha| \le d}$ is the vector with entries $c_\alpha := \EE_\PP[h_\alpha]$. Thus, the ratio is maximized if and only if $\hat{f}$ is a (nonzero) scalar multiple of $c$. The maximum value of the ratio is $\|c\|_2$, which by~\eqref{eq:parceval} is equal to $\|L_{\le d}^\PP\|_\QQ$.

\ref{item:ldlr-3} This is the fact $\|L_{\le d}^\PP\|_\QQ = \|c\|_2$ referenced above.

\ref{item:ldlr-4} This comes from the $\alpha=0$ term in~\ref{item:ldlr-3} because $h_0$ is the constant function $1$.
\end{proof}

\section{General Reduction Proofs}
\label{sec:general}

We now present the details of the three modular components that underlie our proofs, discussed previously in \Cref{sec:pf-overview}. In this section we give non-asymptotic results, where bounds on e.g.\ failure probability hold exactly for every $n$, and asymptotic notation like $O(\cdot)$ is only used for runtimes.

\subsection{From ROC to ratio}
\label{sec:roc-to-ratio}

Here we will see that if you can beat the ROC curve that arises from thresholding the degree-$d$ LDLR, then you can achieve a value for the ratio $R^\PP$ from~\eqref{eq:ratio} that exceeds $\|L_{\le d}^\PP\|_\QQ$, meaning you achieve a larger value for $R^\PP$ than any degree-$d$ polynomial (see \Cref{prop:ldlr-facts}\ref{item:ldlr-2}).

This step is only used in one of our two main results: the computational Neyman--Pearson lemma but not the spiked Wigner application. The arguments in this section hold in high generality. We consider the setting of \Cref{def:ldlr} --- testing $\QQ = \cN(0,I_N)$ versus an arbitrary $\PP$ with finite moments --- since this is the context in which we have formally defined the LDLR.

\begin{definition}[LDLR threshold test]
\label{def:ldlr-threshold}
Consider the setting of \Cref{def:ldlr}. An \emph{LDLR threshold test of degree $d \ge 0$} is a test $B : \RR^N \to \{0,1\}$ of the following form: for some threshold $t \in \RR \cup \{\pm \infty\}$,
\[ B(y) = \begin{cases} 1 & \text{if } L_{\le d}^\PP(y) \ge t, \\ 0 & \text{if } L_{\le d}^\PP(y) < t. \end{cases} \]
\end{definition}

\begin{remark}
Assume the order-$\le d$ moments of $\PP$ and $\QQ$ do not all match exactly. This implies that $L_{\le d}^\PP$ is not a constant (degree-0) polynomial. Then for every $\alpha \in [0,1]$ there exists an LDLR threshold test with $\QQ(B=1) = \alpha$. To see this: each level set $\{y \,:\, L_{\le d}^\PP(y)=u\}$ is the zero set of a non-constant polynomial, hence has Gaussian measure zero, and therefore $L_{\le d}^\PP(Y)$ is atomless under $Y\sim\QQ$.
\end{remark}
    
\begin{lemma}[ROC to ratio]
\label{lem:direct-roc-to-ratio}
Let $\QQ = \cN(0,I_N)$ and let $\PP$ be a distribution on $\RR^N$ with finite moments of all orders. For some $d \ge 0$, suppose $A \randmap \RR^N\to\{0,1\}$ is a test that beats the ROC curve of the LDLR in the following sense: there is an LDLR threshold test $B$ of degree $d$ (\Cref{def:ldlr-threshold}) such that
\[ \QQ(A=1) = \QQ(B=1) \qquad \text{and} \qquad \PP(A=1) \ge \PP(B=1) + \eps, \]
for some $\eps>0$. Then the randomized score $F := L^\PP_{\le d} + \eps(A-B)$ satisfies
\[
\frac{\EE_{\PP}[F]}{\sqrt{\EE_\QQ[F^2]}} \ge \|L^\PP_{\le d}\|_\QQ + \frac{\eps^2}{3\|L^\PP_{\le d}\|_\QQ}.
\]
\end{lemma}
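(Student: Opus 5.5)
The plan is to compute the numerator and the squared denominator of $R^\PP(F)$ separately, using the Hermite/projection identities from \Cref{prop:ldlr-facts}. The one non-routine point is that the cross term $\EE_\QQ[L(A-B)]$ is nonpositive. Write $L := L^\PP_{\le d}$ and $\ell := \|L\|_\QQ$. Since $L$ is itself a degree-$\le d$ polynomial, \Cref{prop:ldlr-facts}\ref{item:ldlr-1} gives $\EE_\PP[L] = \langle L,L\rangle_\QQ = \ell^2$.

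\emph{Numerator.} The numerator then satisfies
\[ \EE_\PP[F] = \ell^2 + \eps\bigl(\PP(A=1) - \PP(B=1)\bigr) \ge \ell^2 + \eps^2. \]
Here the expectation also averages over the internal seed of $A$, which is independent of $Y$.

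\emph{Denominator.} Expand
\[ \EE_\QQ[F^2] = \ell^2 + 2\eps\,\EE_\QQ[L(A-B)] + \eps^2\,\EE_\QQ[(A-B)^2]. \]
The last term is at most $\eps^2$, because $A-B\in\{-1,0,1\}$.

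\emph{Cross term (main step).} This is the Neyman--Pearson-type argument, and it is the only real content. First suppose the threshold $t$ of $B$ is finite. Because $\QQ(A=1)=\QQ(B=1)$, we have $\EE_\QQ[t(A-B)]=0$, so $\EE_\QQ[L(A-B)] = \EE_\QQ[(L-t)(A-B)]$. The integrand is pointwise $\le 0$ by a case split:
\begin{itemize}
\item on $\{B=1\}$ we have $L\ge t$ and $A-B\le 0$;
\item on $\{B=0\}$ we have $L<t$ and $A-B\ge 0$.
\end{itemize}
Hence $\EE_\QQ[L(A-B)]\le 0$. If $t=\pm\infty$, then $B$ is constant. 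In that case $A-B$ has a fixed sign and $\QQ$-mean zero, so $A=B$ holds $\QQ$-a.s. and the cross term is $0$. Either way, $\EE_\QQ[F^2]\le \ell^2+\eps^2$.

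\emph{Conclusion.} Combining the two bounds,
\[ R^\PP(F) \ge \frac{\ell^2+\eps^2}{\sqrt{\ell^2+\eps^2}} = \sqrt{\ell^2+\eps^2} = \ell + \frac{\eps^2}{\sqrt{\ell^2+\eps^2}+\ell}. \]
It remains to check $\sqrt{\ell^2+\eps^2}\le 2\ell$, i.e.\ $\eps^2\le 3\ell^2$. This holds because $\eps\le 1$ (probabilities lie in $[0,1]$) and $\ell\ge 1$ by \Cref{prop:ldlr-facts}\ref{item:ldlr-4}. This yields the stated bound $\ell + \eps^2/(3\ell)$.
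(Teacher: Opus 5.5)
Your proof is correct and follows essentially the same route as the paper: it uses $\EE_\PP[L^\PP_{\le d}] = \|L^\PP_{\le d}\|_\QQ^2$, the pointwise inequality $(L^\PP_{\le d}-t)(A-B)\le 0$ together with $A=B$ $\QQ$-a.s.\ for an infinite threshold, the bounds $\EE_\PP[F]\ge \ell^2+\eps^2$ and $\EE_\QQ[F^2]\le \ell^2+\eps^2$, and the final reduction to $\eps^2\le 3\ell^2$. The differences are cosmetic: the paper introduces a general weight $\eta$ before setting $\eta=\eps$ and checks the last inequality by squaring rather than rationalizing, and your derivation of $\eps\le 1$ from the probabilities lying in $[0,1]$ is more explicit than the paper's ``by assumption.''
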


\begin{proof}
Write
\[ S := L^\PP_{\le d}. \]
Since $S$ is a degree-$\le d$ polynomial, we have by \Cref{prop:ldlr-facts}\ref{item:ldlr-1} that
\[ \EE_\PP[S] = \langle S,S \rangle_\QQ = \|S\|^2_\QQ. \]
From the definition of LDLR~\eqref{eq:ldlr} and basic properties of Hermite polynomials,
\[
\EE_\QQ[S] = 1.
\]
By assumption,
\[
\EE_\PP[A-B] = \PP(A=1)-\PP(B=1) \ge \eps.
\]
Let $t$ be the threshold associated with $B$ (see \Cref{def:ldlr-threshold}) and assume for now that $t$ is finite. Since $A,B\in\{0,1\}$ and $B$ is a threshold test at level $t$, we have pointwise
\[
(S-t)(A-B)\le 0.
\]
Indeed, on the event $\{S>t\}$ we have $B=1$ and hence $A-B\le 0$; on the event $\{S < t\}$ we have $B=0$ and hence $A-B\ge 0$; on the event $\{S = t\}$, the left-hand side vanishes. Therefore
\begin{align*}
\EE_\QQ[S(A-B)]
&= t\,\EE_\QQ[A-B]+\EE_\QQ[(S-t)(A-B)] \\
&\le t\bigl(\QQ(A=1)-\QQ(B=1)\bigr) \\
&= 0.
\end{align*}
In the case $t \in \{\pm \infty\}$, note that $\alpha \in \{0,1\}$, so $A=B$ with probability 1 under $\QQ$, and therefore $\EE_\QQ[S(A-B)] = 0$.

For $\eta>0$, define
\[
F_\eta:=S+\eta(A-B).
\]
We will bound $R^\PP(F_\eta)$. For the numerator,
\[
\EE_\PP[F_\eta]
=
\EE_\PP[S]+\eta\,\EE_\PP[A-B]
\ge \|S\|_\QQ^2+\eta\eps.
\]
For the denominator,
\begin{align*}
\EE_\QQ[F_\eta^2]
&= \EE_\QQ[S^2]+2\eta\,\EE_\QQ[S(A-B)]+\eta^2\EE_\QQ[(A-B)^2] \\
&\le \|S\|_\QQ^2+\eta^2,
\end{align*}
since $\EE_\QQ[S(A-B)]\le 0$ and $|A-B|\le 1$.
Hence
\[
\frac{\EE_{\PP}[F_\eta]}{\sqrt{\EE_\QQ[F_\eta^2]}}
\ge
\frac{\|S\|_\QQ^2+\eta\eps}{\sqrt{\|S\|_\QQ^2+\eta^2}}.
\]
Set $\eta = \eps$. To finish the proof, we need to show
\[ \sqrt{\|S\|_\QQ^2 + \eps^2} \ge \|S\|_\QQ + \frac{\eps^2}{3\|S\|_\QQ}. \]
To see this, square both sides and simplify to arrive at the equivalent condition $\epsilon^2 \le 3 \|S\|_\QQ^2$, which holds true: $\epsilon \le 1$ by assumption, and $\|S\|_\QQ \ge 1$ by \Cref{prop:ldlr-facts}\ref{item:ldlr-4}.
\end{proof}

\subsection{From ratio to one-sided test}
\label{sec:ratio-to-one-sided}

The remaining two components --- this subsection and the following one --- are used in both of our main results. Here we will see that if you have a randomized score function $F$ that beats the ratio achieved by the LDLR, then you can produce a one-sided distinguisher. We will need some regularity assumptions on $F$. To simultaneously handle the settings of our two main results, we will assume $F$ admits a decomposition $F = R+S$ where $R$ has bounded outputs and $S$ is a low-degree polynomial. For the computational Neyman--Pearson lemma, we use $F$ produced by \Cref{lem:direct-roc-to-ratio} in the previous subsection, which has this form by construction. For the spiked Wigner application, we will have a bounded $F$ produced by~\cite{precise-error}, and set $S = 0$.

For this component of the proof, we specialize to an additive Gaussian model. This step and the next one will each lose a small constant in the SNR, so we will be working with SNR values $0 < \lambda < \lambda'' < \lambda'$. When we refer to different planted distributions such as $\PP_\lambda$ and $\PP_{\lambda''}$, it is assumed that they all have the same distribution for the signal $X$ (see \Cref{def:agm}).

\begin{lemma}[Ratio to one-sided test]
\label{lem:ratio-to-one-sided}
Consider an additive Gaussian model (\Cref{def:agm}). Fix $0 < \lambda < \lambda''$ and write $\rho := \lambda/\lambda''$. Suppose the following hold for some $d \ge 0$.
\begin{enumerate}[label=(\roman*)]
    \item {\bf Beat the LDLR ratio:} There is a randomized function $F \randmap \RR^N \to \RR$ such that
    \begin{enumerate}[label=(\alph*)]
        \item \label{item:beat-ratio} $F$ beats the LDLR ratio, i.e.,
        \[
        \frac{\EE_{\PP_\lambda}[F]}{\sqrt{\EE_\QQ[F^2]}} \ge \|L_{\le d}^{\PP_\lambda}\|_{\QQ} + \Delta
        \]
        for some $\Delta \in (0,1]$,
        \item $F$ admits a decomposition $F = R + S$,
        \item $R$ is a randomized function whose outputs are always bounded by $|R(y)| \le M \|F\|_\QQ$ for some parameter $M \ge 1$,
        \item $S$ is a (deterministic) polynomial of degree $\le d$,
        \item $R$ and $S$ are both computable in time $T$.
    \end{enumerate}
    \item \label{item:ratio-of-norms} {\bf Ratio-of-norms bound:} There is a parameter $\tau \ge 1$ such that every degree-$\le d$ polynomial $p$ satisfies
    \[
    \|p\|_\PP \le \tau \|p\|_{\QQ} \qquad \text{for} \qquad \PP \in \{\PP_\lambda,\PP_{\lambda''}\}.
    \]
\end{enumerate}
Then there is a test $A'' \randmap \RR^N \to \{0,1\}$ computable in time $M^2 N^{O(d)} \cdot T$ such that
\begin{equation}\label{eq:one-sided-err-prob}
\QQ(A''=1) \le \frac{C \rho^{2d}}{\Delta^2}
\qquad\text{and}\qquad
\PP_{\lambda''}(A''=1) \ge \frac{\Delta^2}{C M^2 \tau^2}
\end{equation}
where $C > 0$ is an absolute constant. The algorithm for $A''$ requires access to the parameters $\rho,d,\Delta,M,\tau$.
\end{lemma}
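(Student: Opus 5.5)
Throughout, normalize so that $\|F\|_\QQ = 1$; this rescales $R$ and $S$ and preserves both the ratio and the hypothesis $|R| \le M\|F\|_\QQ$. Write $L := L^{\PP_\lambda}_{\le d}$. The plan follows the ``noise the high-degree part'' idea from the overview: isolate the high-degree part of $F$, show it carries a nontrivial share of the ratio, apply $T_\rho$ to shrink its $\QQ$-norm by $\rho^{d}$, and threshold the result.

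\textbf{Step 1: isolate a high-degree part with $\Omega(\Delta)$ advantage.} Let $\bar R(y) := \EE_\omega[R_\omega(y)]$ and let $p := \Pi_{\le d}\bar R$ be its orthogonal projection in $L^2(\QQ)$. Then $\Pi_{\le d}F = p + S$, and $\|p+S\|_\QQ \le \|F\|_\QQ = 1$ by Jensen. By \Cref{prop:ldlr-facts}\ref{item:ldlr-1} and Cauchy--Schwarz,
\[ \EE_{\PP_\lambda}[p+S] = \langle p+S, L\rangle_\QQ \le \|L\|_\QQ. \]
Combining this with hypothesis \ref{item:beat-ratio} gives
\[ \EE_{\PP_\lambda}[F - p - S] = \EE_{\PP_\lambda}[R - p] \ge \Delta. \]
Also $\|p\|_\QQ \le \|\bar R\|_\QQ \le M$.

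\textbf{Step 2: apply noise.} Define the randomized function
\[ \mu(y) := R\bigl(\rho y + \sqrt{1-\rho^2}G\bigr) - (T_\rho p)(y), \qquad G \sim \cN(0,I_N). \]
Since $T_\rho \PP_{\lambda''} = \PP_\lambda$, we get
\[ \EE_{\PP_{\lambda''}}[\mu] = \EE_{\PP_\lambda}[R - p] \ge \Delta. \]
The seed-averaged function is $\bar\mu := T_\rho(\bar R - p)$. Because $\bar R - p$ has no Hermite mass at degrees $\le d$, \eqref{eq:T-hermite} (extended to $L^2(\QQ)$) gives
\[ \|\bar\mu\|_\QQ \le \rho^{d+1}\|\bar R\|_\QQ \le \rho^{d} M. \]
This is not yet the claimed bound, because the fresh randomness in $R$ and $G$ still has variance up to $M^2$ under $\QQ$. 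To fix this, I average $m$ independent copies of the first term. Call the result $\mu_m$; its variance contribution is $O(M^2/m)$. Choosing $m \asymp M^2\rho^{-2d}/\Delta^2$ gives $\EE_\QQ[\mu_m^2] \le 2\rho^{2d}(1+\cdots)$. More carefully, I will replace $\bar\mu$ by $\bar R$-centered quantities so that the bound becomes $O(\rho^{2d})$ independent of $M$. If that fails, I will absorb $M$ into the threshold, using the bound $\EE_{\PP_{\lambda''}}[\mu]\ge\Delta$ relative to $\|\bar R - p\|_\QQ$ rather than $M$.

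\textbf{Step 3: threshold.} Define $A'' := \mathbf 1\{\mu_m \ge \Delta/2\}$.
\begin{itemize}
\item[] \emph{Type I error.} Chebyshev gives $\QQ(A''=1) \le 4\EE_\QQ[\mu_m^2]/\Delta^2 = O(\rho^{2d}/\Delta^2)$.
\item[] \emph{Power.} Paley--Zygmund gives $\PP_{\lambda''}(\mu_m \ge \Delta/2) \ge (\Delta/2)^2/\EE_{\PP_{\lambda''}}[\mu_m^2]$. To bound the second moment, the random term is pointwise bounded by $M$, and $\|T_\rho p\|_{\PP_{\lambda''}} \le \tau\|T_\rho p\|_\QQ \le \tau M$ by the ratio-of-norms hypothesis \ref{item:ratio-of-norms}. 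Hence $\EE_{\PP_{\lambda''}}[\mu_m^2] \le (M+\tau M)^2 \le 4\tau^2M^2$, which yields the power bound in \eqref{eq:one-sided-err-prob}.
\end{itemize}

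\textbf{Main obstacle: computability of $T_\rho p$.} The polynomial $p$ has $N^{O(d)}$ Hermite coefficients $\EE_\QQ[R\, h_\alpha]$, and these are unknown. I will estimate each one by Monte Carlo: sample $Y\sim\QQ$ together with a fresh seed and average $R(Y)h_\alpha(Y)$. Each sample has variance at most $M^2$, so with $K$ samples the total squared $L^2(\QQ)$ error is about $N^{O(d)}M^2/K$. I need this error to be at most $c\,\Delta^2\rho^{2d}$ so that Chebyshev still applies. That requires $K = M^2 N^{O(d)}\rho^{-2d}/\Delta^2$; with $\rho$ fixed and $\Delta$ accounted for by the algorithm's parameters, this fits the runtime $M^2N^{O(d)}\cdot T$. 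The error in the estimate $\hat p$ must also be controlled under $\PP_{\lambda''}$ and $\PP_\lambda$. This is exactly where the ratio-of-norms bound for both distributions is used: the error polynomial has $\PP$-norm at most $\tau$ times its $\QQ$-norm, so it perturbs both the mean $\Delta$ and the second moment by negligible amounts. The remaining work is a Markov bound so that the random estimate is good with constant probability, folded into the absolute constant $C$. I expect this estimation bookkeeping to be the most delicate part.
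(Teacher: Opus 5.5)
Your architecture matches the paper's. You project the seed-averaged score onto degrees $\le d$, noise the remainder using $T_\rho\PP_{\lambda''}=\PP_\lambda$, apply Chebyshev under $\QQ$ and Paley--Zygmund under $\PP_{\lambda''}$, then use Monte Carlo. Your Step~1 is correct, and so is your planted second-moment bound, which applies the ratio of norms at $\PP_{\lambda''}$ to $T_\rho p$ instead of the paper's Jensen step back to $\PP_\lambda$.

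\textbf{The main gap is the null-side bound in Step~2.} You bound $\|\bar\mu\|_\QQ$ through $\|\bar R\|_\QQ\le M$. You correctly see that this only gives $\QQ(A''=1)=O(M^2\rho^{2d}/\Delta^2)$, but you leave it unresolved. Averaging into $\mu_m$ removes only the seed variance $M^2/m$, not the $\|\bar\mu\|_\QQ^2$ term, so your claim $\EE_\QQ[\mu_m^2]\le 2\rho^{2d}(1+\cdots)$ is unsupported. Raising the threshold cannot help either, since the planted mean is only $\Delta$. The missing observation is one line. Put $f:=\EE_\omega F=\bar R+S$; since $S$ has degree $\le d$,
\[
\bar R-p=\bar R-\Pi_{\le d}\bar R=f-\Pi_{\le d}f=f_{>d}.
\]
Hence $\|\bar R-p\|_\QQ\le\|f\|_\QQ\le\|F\|_\QQ=1$, and so $\|\bar\mu\|_\QQ\le\rho^{d+1}$. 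By contrast, $\|\bar R\|_\QQ$ itself can be far larger than $\|F\|_\QQ$, because $\bar R$ and $S$ may largely cancel; that is exactly why bounding through it loses $M$. This is the paper's Step~1.

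\textbf{Two computability points are also missing.}

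First, normalizing $\|F\|_\QQ=1$ is free for the analysis but not for the algorithm. Your threshold is really $\Delta\|F\|_\QQ/2$, and $\|F\|_\QQ$ is not among the parameters $\rho,d,\Delta,M,\tau$ available to the test. The paper estimates $\EE_\QQ[F^2]$ by sampling. Because $S$ is an unbounded polynomial, the variance of $F^2$ must be controlled using $\EE_\QQ[S^4]\le 9^d(\EE_\QQ[S^2])^2$ (\Cref{prop:gauss-bonami}) together with $\|S\|_\QQ\le(1+M)\|F\|_\QQ$.

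Second, an estimate that is ``good with constant probability, folded into $C$'' fails on the null side. If $\hat p$ is bad with constant probability, $\QQ(A''=1)$ can be constant. There are two ways to fix this:
\begin{itemize}
\item Make every estimation failure have probability $O(\rho^{2d}/\Delta^2)$. The paper does this, after first reducing to the case $\tau\le\Delta^2/(M\rho^d)$ via a trivial test, so that these failures do not eat into the power.
\item More cleanly for your one-shot coefficient estimate $\hat p$, apply Chebyshev and Paley--Zygmund to the joint randomness. Since $\hat p$ is unbiased and independent of $Y$, you get $\EE\|T_\rho(\hat p-p)\|_\QQ^2\le (N+1)^dM^2/K$. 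The planted mean is unchanged in expectation. The ratio of norms at $\PP_{\lambda''}$ then bounds the error's planted second moment realization by realization.
\end{itemize}
Done the second way, your global estimate is a valid and somewhat simpler alternative to the paper's per-input estimate of $T_\rho r_{\le d}(y)$ with its Markov bound on $\sigma_y^2$.
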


\begin{remark}
Above, the expression $N^{O(d)}$ carries the implicit assumptions $N \ge 2$ and $d \ge 1$, and $O(\cdot)$ hides a multiplicative constant depending on $\rho$.
\end{remark}

\begin{proof}
We may assume
\begin{equation}\label{eq:tau-assum}
\frac{\rho^{2d}}{\Delta^2} \le \frac{\Delta^2}{M^2 \tau^2} \qquad \text{i.e.,} \qquad \tau \le \frac{\Delta^2}{M \rho^d},
\end{equation}
since otherwise the desired conclusion is achieved by the trivial test that outputs 1 with probability $\min\{1, \, \rho^{2d}/\Delta^2\}$ and 0 otherwise.

Define $f(y) = \EE[F(y)]$ where the expectation is over $F$'s internal randomness. By Jensen's inequality, $\EE_\QQ[f^2] \le \EE_\QQ[F^2]$, which is finite by assumption.

A function $f \in L^2(\QQ)$ can be decomposed into low- and high-degree parts as follows: recalling the notation from \Cref{def:ldlr},
\[ f_{\le d} := \Pi_{\le d} f, \qquad f_{> d} := f - f_{\le d}. \]
Note that $f_{\le d}$ is a polynomial of degree $\le d$, while $f_{> d}$ is orthogonal (with respect to $\langle \cdot , \cdot \rangle_\QQ$) to \emph{any} degree-$\le d$ polynomial. We emphasize that $f_{>d}$ has only high-degree terms ($|\alpha| > d$) in the Hermite basis, but may contain low-degree monomials when expanded in the monomial basis.

For our function $f = \EE[F]$ in question, we will use the shorthand $f = p+q$ where
\[
p := f_{\le d},
\qquad
q := f_{>d},
\qquad
\mu := T_\rho q,
\]
where $T_\rho$ denotes the Gaussian noise operator (see \Cref{sec:background-gaussian}) and, recall, $\rho := \lambda/\lambda''$. We will first show that thresholding $\mu$ at the value $\Delta \|F\|_\QQ / 3$ produces a test with the desired properties. The algorithm does not have direct access to $\mu$ but we will later show how to compute a suitable approximation to $\mu$.

\paragraph{Step 1: the null second moment is small.}
Expand the high-degree part, $q$, in the orthonormal Hermite basis as
\[
q = \sum_{|\alpha|>d} \hat q_\alpha h_\alpha.
\]
Recalling the action of $T_\rho$ on the Hermite basis~\eqref{eq:T-hermite} we have
\[
\mu = T_\rho q = \sum_{|\alpha|>d} \rho^{|\alpha|} \hat q_\alpha h_\alpha,
\]
and therefore
\[
\EE_\QQ[\mu^2]
= \sum_{|\alpha|>d} \rho^{2|\alpha|} \hat q_\alpha^2
\le \rho^{2d} \sum_{|\alpha|>d} \hat q_\alpha^2
\le \rho^{2d} \|f\|_{\QQ}^2
\le \rho^{2d} \|F\|_{\QQ}^2.
\]
Consequently, by Markov's inequality,
\begin{equation}\label{eq:mu-1}
\QQ\!\left(\mu \ge \frac{\Delta}{4}\|F\|_\QQ\right)
\le \frac{16}{\Delta^2 \|F\|_\QQ^2}\EE_\QQ[\mu^2]
\le \frac{16\rho^{2d}}{\Delta^2}.
\end{equation}

\paragraph{Step 2: the planted mean is large.}
Using $T_\rho \PP_{\lambda''} = \PP_\lambda$,
\[
\EE_{\PP_{\lambda''}}[\mu]
= \EE_{\PP_{\lambda''}}[T_\rho q]
= \EE_{\PP_\lambda}[q]
= \EE_{\PP_\lambda}[f] - \EE_{\PP_\lambda}[p].
\]
Now $p$ has degree $\le d$, so
\[
\EE_{\PP_\lambda}[p]
= \langle L_{\le d}^{\PP_\lambda}, p\rangle_{\QQ}
\le \|L_{\le d}^{\PP_\lambda}\|_{\QQ} \|p\|_{\QQ}
\le \|L_{\le d}^{\PP_\lambda}\|_{\QQ} \|F\|_{\QQ},
\]
where we have used \Cref{prop:ldlr-facts}\ref{item:ldlr-1} and the fact $\|p\|_\QQ \le \|f\|_\QQ \le \|F\|_\QQ$. From assumption~\ref{item:beat-ratio},
\[
\EE_{\PP_\lambda}[f] \ge \|L_{\le d}^{\PP_\lambda}\|_\QQ \|F\|_\QQ + \Delta \|F\|_\QQ.
\]
Combining the above yields
\[
\EE_{\PP_{\lambda''}}[\mu] \ge \Delta \|F\|_\QQ.
\]

\paragraph{Step 3: the planted second moment is controlled.}
By Jensen,
\begin{align*}
\EE_{\PP_{\lambda''}} [\mu^2] 
&= \EE_{Y \sim \PP_{\lambda''}} \! \left[\left(\EE_{G \sim \cN(0,I_N)} \, q(\rho Y + \sqrt{1-\rho^2} G)\right)^2 \right] \\
&\le \EE_{Y \sim \PP_{\lambda''}} \, \EE_{G \sim \cN(0,I_N)} [q(\rho Y + \sqrt{1-\rho^2} G)^2] \\
&= \EE_{\PP_\lambda}[q^2].
\end{align*}
From the decomposition $F = R+S$ we have $f = r+S$ where $r(y) = \EE[R(y)]$, and furthermore, $q = f_{>d} = r_{>d}$. Therefore
\begin{equation}\label{eq:q-r-high}
q = r - r_{\le d}
\end{equation}
and so
\[
q^2 \le 2r^2 + 2(r_{\le d})^2,
\]
which means
\[
\EE_{\PP_{\lambda''}}[\mu^2]
\le 2\EE_{\PP_\lambda}[r^2] + 2\EE_{\PP_\lambda}[(r_{\le d})^2].
\]
For the first term, we have $r^2 \le M^2 \|F\|_\QQ^2$ using the boundedness of $R$. For the second term, assumption~\ref{item:ratio-of-norms} gives
\[
\EE_{\PP_\lambda}[(r_{\le d})^2]
= \|r_{\le d}\|_{\PP_\lambda}^2
\le \tau^2 \|r_{\le d}\|_{\QQ}^2
\le \tau^2 \|r\|_{\QQ}^2
\le \tau^2 M^2 \|F\|_\QQ^2.
\]
Hence
\[
\EE_{\PP_{\lambda''}}[\mu^2] \le 2M^2(1+\tau^2) \|F\|_\QQ^2 \le 4 M^2 \tau^2 \|F\|_\QQ^2.
\]

\paragraph{Step 4: thresholding $\mu$ gives a one-sided test.}
Let $\nu := \max\{\mu,0\}$. Then
\[
\EE_{\PP_{\lambda''}}[\nu] \ge \EE_{\PP_{\lambda''}}[\mu] \ge \Delta \|F\|_\QQ,
\qquad
\EE_{\PP_{\lambda''}}[\nu^2] \le \EE_{\PP_{\lambda''}}[\mu^2] \le 4 M^2 \tau^2 \|F\|_\QQ^2.
\]
Applying the Paley--Zygmund inequality to $\nu$ yields
\begin{equation}\label{eq:mu-2}
\PP_{\lambda''}\!\left(\mu \ge \frac{\Delta}{2}\|F\|_\QQ\right)
\ge \PP_{\lambda''}\!\left(\nu \ge \frac12 \EE_{\PP_{\lambda''}}[\nu]\right)
\ge \frac{\EE_{\PP_{\lambda''}}[\nu]^2}{4\EE_{\PP_{\lambda''}}[\nu^2]}
\ge \frac{\Delta^2}{16 M^2 \tau^2}.
\end{equation}

\paragraph{Step 5: compute an approximation to $\mu$.}

At this point we have shown that thresholding $\mu$ at the value $\Delta \|F\|_\QQ/3$ produces a one-sided distinguisher with the desired behavior~\eqref{eq:one-sided-err-prob}, but the algorithm does not have direct access to $\mu$ or $\|F\|_\QQ$. We will now show how to approximately compute these. 

First we will show how to approximate $\mu(y)$ for a given $y$. Recall from~\eqref{eq:q-r-high} that $q = r - r_{\le d}$, and so
\begin{equation}\label{eq:r-high}
\mu(y) = T_\rho r(y) - T_\rho r_{\le d}(y).
\end{equation}
We will treat the two terms in~\eqref{eq:r-high} separately. For the first term,
\[
T_\rho r(y) = \EE_{G \sim \cN(0,I_N)}\!\left[r(\rho y + \sqrt{1-\rho^2}G)\right] = \EE\!\left[R(\rho y + \sqrt{1-\rho^2}G)\right],
\]
which can be estimated by averaging independent samples of the bounded random variable $R(\rho y + \sqrt{1-\rho^2}G)$ --- with $y$ fixed but $G,R$ random --- which lies in the interval $[-M \|F\|_\QQ, M \|F\|_\QQ]$. That is, draw independent Gaussian samples $G^{(1)},\ldots,G^{(m)} \sim \cN(0,I_N)$ and define the estimator
\[ \widehat{T_\rho r(y)} := \frac{1}{m} \sum_{j=1}^m R(\rho y + \sqrt{1-\rho^2}G^{(j)}), \]
where the internal randomness of $R$ is independent across terms. Fix a small absolute constant $c \in (0,1)$ to be chosen later. We will bound the failure probabilities of our approximation steps by $c \rho^{2d} / \Delta^2$ so that --- in light of~\eqref{eq:tau-assum} --- they do not interfere with the test's behavior in~\eqref{eq:one-sided-err-prob}. For any fixed $y$, apply Hoeffding's inequality and the fact $\exp(-x) \le 1/x$ for $x > 0$:
\begin{equation}\label{eq:m-cond-1}
\Pr\!\left(\left|\widehat{T_\rho r(y)}-T_\rho r(y)\right| \ge c\Delta\|F\|_\QQ\right) \le 2\exp\left(-\frac{c^2\Delta^2 m}{2M^2}\right) \le \frac{4M^2}{c^2\Delta^2m},
\end{equation}
which is $\le c \rho^{2d}/\Delta^2$ provided $m \ge \frac{4M^2}{c^3 \rho^{2d}}$. 

Now for the second term in~\eqref{eq:r-high},
\[ T_\rho r_{\le d}(y) = \sum_{|\alpha| \le d} \rho^{|\alpha|} \hat{r}_\alpha h_\alpha(y) = \sum_{|\alpha| \le d} \rho^{|\alpha|} h_\alpha(y) \, \EE[R(G)h_\alpha(G)], \]
where $G \sim \cN(0,I_N)$. Again, we can estimate this by averaging independent samples: draw independent Gaussians $G^{(j,\alpha)} \sim \cN(0,I_N)$ for $j \in [m]$ and $|\alpha| \le d$, and define
\[ \widehat{T_\rho r_{\le d}(y)} := \sum_{|\alpha| \le d} \rho^{|\alpha|} h_\alpha(y) \cdot \frac{1}{m} \sum_{j=1}^m R(G^{(j,\alpha)})h_\alpha(G^{(j,\alpha)}). \]
We will bound the variance of this estimator and then apply Chebyshev: for any fixed $y$,
\[ \Var\left(\widehat{T_\rho r_{\le d}(y)}\right) \le \frac{1}{m} \sum_{|\alpha| \le d} \rho^{2|\alpha|} h_\alpha(y)^2 \, \EE[R(G)^2 h_\alpha(G)^2]. \]
Now
\[ \EE[R(G)^2 h_\alpha(G)^2] \le M^2 \|F\|_\QQ^2 \, \EE[h_\alpha(G)^2] = M^2 \|F\|_\QQ^2, \]
so
\[ \Var\left(\widehat{T_\rho r_{\le d}(y)}\right) \le \frac{M^2 \|F\|_\QQ^2}{m} \, \sigma_y^2 \qquad \text{where} \qquad \sigma_y^2 := \sum_{|\alpha| \le d} h_\alpha(y)^2. \]
By Chebyshev, for any fixed $y$,
\begin{equation}\label{eq:est-h-1}
\Pr\left(\left|\widehat{T_\rho r_{\le d}(y)} - T_\rho r_{\le d}(y)\right| \ge c \Delta \|F\|_\QQ\right) \le \frac{M^2}{c^2 \Delta^2 m} \, \sigma_y^2.
\end{equation}
Now we bound $\sigma_Y^2$ with high probability over $Y \sim \QQ$ or $Y \sim \PP_{\lambda''}$. The number of $\alpha$ such that $|\alpha| \le d$ is at most $(N+1)^d$. We have
\[ \EE_{Y \sim \QQ}[\sigma_Y^2] = \sum_{|\alpha| \le d} \EE_\QQ[h_\alpha^2] = \sum_{|\alpha| \le d} 1 \le (N+1)^d, \]
and similarly, using assumption~\ref{item:ratio-of-norms},
\[ \EE_{Y \sim \PP_{\lambda''}}[\sigma_Y^2] = \sum_{|\alpha| \le d} \EE_{\PP_{\lambda''}}[h_\alpha^2] \le \sum_{|\alpha| \le d} \tau^2 \le (N+1)^d \tau^2. \]
By Markov, under either $Y \sim \QQ$ or $Y \sim \PP_{\lambda''}$,
\begin{equation}\label{eq:est-h-2}
\Pr(\sigma_Y^2 \ge t) \le \frac{(N+1)^d \tau^2}{t}.
\end{equation}
Now take a union bound over the previous two steps~\eqref{eq:est-h-1},\eqref{eq:est-h-2}, setting both failure probabilities to $c \rho^{2d}/\Delta^2$: under either $Y \sim \QQ$ or $Y \sim \PP_{\lambda''}$,
\begin{equation}\label{eq:m-cond-2}
\Pr\left(\left|\widehat{T_\rho r_{\le d}(Y)} - T_\rho r_{\le d}(Y)\right| \ge c \Delta \|F\|_\QQ\right) \le \frac{2c\rho^{2d}}{\Delta^2} \qquad \text{provided} \qquad m \ge \frac{M^2 \Delta^2 (N+1)^d \tau^2}{c^4 \rho^{4d}}.
\end{equation}

Finally, we will approximate $\|F\|_\QQ^2 = \EE_\QQ[F^2]$ so that we know where to threshold the (approximated) value of $\mu$. Again, draw independent Gaussian samples $G^{(1)},\ldots,G^{(m)} \sim \cN(0,I_N)$ and define
\[ \widehat{\|F\|_\QQ^2} := \frac{1}{m} \sum_{j=1}^m F(G^{(j)})^2. \]
Bound the variance:
\[ \Var\left(\widehat{\|F\|_\QQ^2}\right) \le \frac{1}{m} \EE_\QQ[F^4] = \frac{1}{m} \EE_\QQ[(R+S)^4] \le \frac{2^3}{m}\!\left(\EE_\QQ[R^4] + \EE_\QQ[S^4]\right) \le \frac{2^3}{m}\!\left(M^4 \|F\|_\QQ^4 + \EE_\QQ[S^4]\right). \]
Using \Cref{prop:gauss-bonami},
\[ \EE_\QQ[S^4] \le 9^d (\EE_\QQ[S^2])^2. \]
Furthermore,
\[ \EE_\QQ[S^2] = \EE_\QQ[(F-R)^2] \le 2 (\EE_\QQ[F^2] + \EE_\QQ[R^2]) \le 2(\|F\|_\QQ^2 + M^2 \|F\|_\QQ^2) \le 4M^2 \|F\|_\QQ^2, \]
and so,
\[ \Var\left(\widehat{\|F\|_\QQ^2}\right) \le C \cdot 9^d \cdot \frac{M^4 \|F\|_\QQ^4}{m} \]
for an absolute constant $C \ge 1$. By Chebyshev,
\begin{equation}\label{eq:m-cond-3}
\Pr\left(\left|\widehat{\|F\|_\QQ^2} - \|F\|_\QQ^2\right| \ge c \|F\|_\QQ^2\right) \le \frac{c \rho^{2d}}{\Delta^2} \qquad \text{provided} \qquad m \ge \frac{C M^4 \Delta^2 \, 9^d}{c^3 \rho^{2d}}.
\end{equation}

Putting it together, choose
\[ m \ge \frac{CM^4 \, 9^d (N+1)^d \tau^2}{c^4 \rho^{4d}} \]
to satisfy~\eqref{eq:m-cond-1},\eqref{eq:m-cond-2},\eqref{eq:m-cond-3}; we have used $\Delta \le 1$ here. The algorithm $A''$ receives a sample $Y$ and computes
\[ \widehat{\mu(Y)} := \widehat{T_\rho r(y)} - \widehat{T_\rho r_{\le d}(y)} \]
and thresholds it at the value $\Delta \widehat{\|F\|_\QQ^2}/3$. Combine the approximation results~\eqref{eq:m-cond-1},\eqref{eq:m-cond-2},\eqref{eq:m-cond-3}: under either $Y \sim \QQ$ or $Y \sim \PP_{\lambda''}$, with probability $\ge 1 - 4c\rho^{2d}/\Delta^2$,
\[ \left|\widehat{\mu(Y)} - \mu(Y)\right| \le 2c\Delta\|F\|_\QQ^2 \qquad \text{and} \qquad \left|\widehat{\|F\|_\QQ^2} - \|F\|_\QQ^2\right| \le c \|F\|_\QQ^2. \]
Choosing $c$ to be a sufficiently small absolute constant and recalling the behavior of $\mu$ from~\eqref{eq:mu-1},\eqref{eq:mu-2} along with~\eqref{eq:tau-assum}, the test $A''$ achieves the desired behavior~\eqref{eq:one-sided-err-prob}.

The runtime is dominated by drawing $m N^{O(d)}$ Gaussian samples and running the subroutines $R$ and/or $S$ for each sample. Therefore the runtime is
\[ m N^{O(d)} \cdot T = M^4 N^{O(d)} \tau^2 \cdot T \le M^2 N^{O(d)} \cdot T, \]
where we have used~\eqref{eq:tau-assum} and $\Delta \le 1$ in the last step.
\end{proof}

\subsection{From one-sided to two-sided test}
\label{sec:one-sided-to-two-sided}

We now show how to boost a one-sided distinguisher to a two-sided distinguisher, again in the context of additive Gaussian models. We will need to define a property called \emph{large-set expansion}, and to do so, we need the following definition.

\begin{definition}[Symmetry operator]
\label{def:sym-op}
Let $P = P_\omega$ be a randomized function $P \randmap \RR^N \to \RR^N$ with internal randomness $\omega \sim \Omega$ (see \Cref{sec:background-randf}). We call $P$ a \emph{symmetry operator} if for any $y \in \RR^N$, any fixed $\omega \in \Omega$, and a random $\omega' \sim \Omega$, we have that $P_{\omega'}(y)$ and $P_{\omega'}(P_\omega(y))$ have the same distribution.

A distribution $\QQ$ over $\RR^N$ is called \emph{$P$-invariant} if for $Y \sim \QQ$, we have that $Y$ and $P(Y)$ have the same distribution.

A function $g: \RR^N \to \RR$ is called \emph{$P$-invariant} if for every $y \in \RR^N$ and every $\omega \in \Omega$, we have $g(y) = g(P_\omega(y))$.
\end{definition}

An example to keep in mind, which we will use in the spiked Wigner setting, is the following: $\omega$ is a uniformly random permutation of $[n]$ and $P_\omega$ acts on an $n \times n$ matrix by permuting both the rows and columns by $\omega$. Another (trivial) example of a symmetry operator is the identity.

\begin{lemma}[One-sided to two-sided test]
\label{lem:one-sided-to-two-sided}
Consider an additive Gaussian model (\Cref{def:agm}). Fix $0 < \lambda'' < \lambda'$, and write $\rho := \lambda''/\lambda'$. Suppose there are parameters $\beta,\gamma,\eta,\delta \in (0,1)$ such that the following hold.
\begin{enumerate}[label=(\roman*)]
    \item \label{item:B-success} {\bf One-sided test:} There is a test $A''\randmap \RR^N \to \{0,1\}$ computable in time $T$ such that
    \[
    \QQ(A''=1) \le \delta
    \qquad and \qquad
    \PP_{\lambda''}(A''=1) \ge \beta.
    \]
    \item \label{item:sym-op}{\bf Symmetry:} There is a symmetry operator (\Cref{def:sym-op}) $P \randmap \RR^N \to \RR^N$ computable in time $T$ such that $\QQ$ and $\PP_{\lambda''}$ are both $P$-invariant.
    \item \label{item:rev-hyp} {\bf Large-set expansion:} For every $P$-invariant function $g:\RR^N \to [0,1]$ satisfying
    \[
    \EE_{\PP_{\lambda''}}[g] \ge \beta,
    \]
    we have
    \[
    \PP_{\lambda'}\!\left(T_\rho g \ge \gamma\right) \ge 1-\eta.
    \]
\end{enumerate}
Let $m \ge 1$ be an integer. Then there is a test $A'\randmap \RR^N \to \{0,1\}$ computable in time $O(mT)$ such that
\[
\QQ(A'=1) \le m\delta
\qquad \text{and} \qquad
\PP_{\lambda'}(A'=1) \ge 1-\eta-e^{-m\gamma}.
\]
The algorithm for $A'$ requires access to the parameters $\rho,m$.
\end{lemma}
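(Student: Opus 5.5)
The test $A'$ runs the one-sided test on many correlated noisy copies of the input and outputs the OR. On input $y$, draw independent $G^{(1)},\ldots,G^{(m)} \sim \cN(0,I_N)$ and set $Y^{(j)} := P(\rho y + \sqrt{1-\rho^2}\,G^{(j)})$, using fresh internal randomness for $P$ each time. Then run $A''$ on each $Y^{(j)}$, again with independent internal randomness, and output $A'(y) := 1$ iff some run outputs 1. The runtime is $O(mT)$: each copy needs one application of $P$, one call to $A''$, and $O(N)$ arithmetic, and we assume $T \ge N$ so that reading the input is covered.

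\emph{Type I error.} Under $Y \sim \QQ = \cN(0,I_N)$, each $\rho Y + \sqrt{1-\rho^2}G^{(j)}$ is again distributed as $\QQ$. By $P$-invariance of $\QQ$, so is $Y^{(j)}$. Hence $\QQ$-probability that run $j$ outputs 1 is at most $\delta$, and a union bound gives $\QQ(A'=1) \le m\delta$.

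\emph{Type II error.} Define $g(z) := \EE_{\omega}\Pr[A''(P_\omega(z)) = 1]$, averaging over the randomness of both $P$ and $A''$. So $g:\RR^N \to [0,1]$.

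First, $g$ is $P$-invariant. For fixed $\omega$, the symmetry-operator property says $P_{\omega'}(P_\omega(z))$ and $P_{\omega'}(z)$ have the same law for random $\omega'$, which gives $g(P_\omega(z)) = g(z)$.

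Second, $g$ has large mean under $\PP_{\lambda''}$. Since $\PP_{\lambda''}$ is $P$-invariant, $\EE_{\PP_{\lambda''}}[g] = \PP_{\lambda''}(A''=1) \ge \beta$.

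Now fix an input $y$. Conditionally on $y$, the $m$ runs are independent, because the $G^{(j)}$, the seeds of $P$, and the seeds of $A''$ are all fresh. Each run outputs 1 with probability exactly $\EE_G[g(\rho y + \sqrt{1-\rho^2}G)] = T_\rho g(y)$. Therefore
\[
\Pr[A'(y)=0] = (1 - T_\rho g(y))^m \le e^{-m\, T_\rho g(y)}.
\]
By large-set expansion (assumption (iii)), applied to $g$, the event $\{T_\rho g(Y) \ge \gamma\}$ has $\PP_{\lambda'}$-probability at least $1-\eta$. On that event $\Pr[A'(Y)=0] \le e^{-m\gamma}$, and off it we bound the probability by 1. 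So
\[
\PP_{\lambda'}(A'=0) \le \eta + e^{-m\gamma}.
\]

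The only delicate point is checking that $g$ is genuinely $P$-invariant and measurable, so that assumption (iii) applies. This is exactly why $P$ is applied once more, with fresh randomness, inside each copy; the rest is routine.
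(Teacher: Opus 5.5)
Your proof is correct and follows the paper's argument essentially step for step. It uses the same OR of $m$ runs of $A''\circ P$ on noisy copies, the same union bound under $\QQ$ via $P$-invariance, and the same function $g(z)=\Pr[A''(P(z))=1]$ with conditional independence and large-set expansion under $\PP_{\lambda'}$. You even spell out the $P$-invariance of $g$, which the paper only asserts. One cosmetic difference: you assume $T \ge N$, whereas the paper notes this is automatic because $P$ must write an $N$-dimensional output within time $T$.
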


In particular, if we are in an asymptotic regime where $m\delta = o(1)$, $\eta = o(1)$, and $m\gamma = \omega(1)$, then $A'$ is a two-sided distinguisher with success probability $1-o(1)$ under both $\QQ$ and $\PP_{\lambda'}$.

The presence of the symmetry operator makes it easier to verify large-set expansion. One could choose the symmetry operator $P = I_N$, and then every function is $P$-invariant. For our spiked Wigner application, we will take $P$ to be a random permutation as mentioned above, so that we only need to verify large-set expansion for permutation-invariant functions.

\begin{proof}
Let
\[
g(y) := \Pr\{A''(P(y))=1\},
\]
where the probability is over the internal randomness of $P$ and $A''$. Note that $g$ is $P$-invariant (in the sense of \Cref{def:sym-op}). Using the $P$-invariance of $\PP_{\lambda''}$ and the assumption~\ref{item:B-success} on $A''$,
\[
\EE_{\PP_{\lambda''}}[g] = \PP_{\lambda''}(A'' \circ P = 1) = \PP_{\lambda''}(A''=1) \ge \beta.
\]
We now define $A'$. On input $Y \in \RR^N$, generate independent Gaussian vectors $G^{(1)},\ldots,G^{(m)} \sim \cN(0,I_N)$ and form
\[
Y^{(i)} := \rho Y + \sqrt{1-\rho^2}G^{(i)}
\qquad \text{for } i=1,\ldots,m.
\]
Run $A'' \circ P$ independently on each $Y^{(i)}$, using fresh internal randomness each time, and output $1$ if and only if at least one of these $m$ runs outputs $1$. The runtime is $O(mT)$.

First suppose $Y \sim \QQ$. Then each $Y^{(i)}$ is distributed as $\QQ$, so by a union bound, and using the $P$-invariance of $\QQ$,
\[
\QQ(A'=1) \le \sum_{i=1}^m \QQ(A''(P(Y^{(i)}))=1) \le m\delta.
\]

Now suppose $Y \sim \PP_{\lambda'}$. Let
\[
E := \{T_\rho g(Y) \ge \gamma\}.
\]
By assumption~\ref{item:rev-hyp},
\[
\PP_{\lambda'}(E) \ge 1-\eta.
\]
Condition on a realization of $Y$ for which $E$ occurs. Since the Gaussian vectors $G^{(1)},\ldots,G^{(m)}$ and the internal randomness of the $m$ calls to $A'' \circ P$ are mutually independent, the $m$ trials are conditionally independent given $Y$. Moreover, for each $i$,
\[
\Pr\{A''(P(Y^{(i)}))=1 \mid Y\}
= \EE\!\left[g(Y^{(i)}) \mid Y\right]
= T_\rho g(Y)
\ge \gamma.
\]
Therefore, conditional on $Y \in E$,
\[
\Pr\{A'=0 \mid Y\} \le (1-\gamma)^m \le e^{-m\gamma}.
\]
Taking expectation over $Y$ gives
\[
\PP_{\lambda'}(A'=1)
\ge \PP_{\lambda'}(E)\bigl(1-e^{-m\gamma}\bigr)
\ge (1-\eta)\bigl(1-e^{-m\gamma}\bigr)
\ge 1-\eta-e^{-m\gamma},
\]
as claimed.

The runtime is dominated by making $m$ calls to the functions $P$ and $A''$, which gives the runtime $O(mT)$. An additional $O(mN)$ arithmetic operations and Gaussian samples are needed to prepare the inputs to those functions. Since $T = \Omega(N)$ --- the time required to write the output of $P$ --- this can be absorbed into $O(mT)$.
\end{proof}

\section{Computational Neyman--Pearson Lemma}
\label{sec:comp-np-formal}

We now state our computational Neyman--Pearson lemma, the formal version of \Cref{thm:comp-np-informal} from the Introduction.

\begin{theorem}\label{thm:comp-np}
Consider an additive Gaussian model (\Cref{def:agm}) in the asymptotic regime $n \to \infty$; the model is specified by $N = N_n \to \infty$ and $X = X_n$. Fix constants $0 < \lambda < \lambda'$, and set $\lambda'' := \sqrt{\lambda \lambda'}$ and $\rho := \lambda/\lambda'' = \lambda''/\lambda' \in (0,1)$. Fix an arbitrary sequence $d = d_n \to \infty$. Suppose the following hold for all sufficient large $n$.
\begin{itemize}
    \item {\bf Beat the ROC curve of LDLR:} Let $A \randmap \RR^N\to\{0,1\}$ be a test computable in time $T = T_n$. Let $B : \RR^N \to \{0,1\}$ be an LDLR threshold test of degree $d$ with threshold $t = t_n$ (\Cref{def:ldlr-threshold}) and suppose
    \[ \QQ(A=1) = \QQ(B=1) \qquad \text{and} \qquad \PP_\lambda(A=1) \ge \PP_\lambda(B=1) + \eps, \]
    for some constant $\eps > 0$.

    \item {\bf Ratio-of-norms bound:} There is a parameter $\tau = \tau_n \ge 1$ such that every degree-$\le d$ polynomial $p$ satisfies
    \[
    \|p\|_\PP \le \tau \|p\|_{\QQ} \qquad \text{for} \qquad \PP \in \{\PP_\lambda,\PP_{\lambda''}\}.
    \]

    \item {\bf Symmetry:} There is a symmetry operator (\Cref{def:sym-op}) $P \randmap \RR^N \to \RR^N$ computable in time $T = T_n$ such that $\QQ$ and $\PP_{\lambda''}$ are both $P$-invariant.
    
    \item {\bf Large-set expansion:} For some parameters $\beta_n,\gamma_n,\eta_n \in (0,1)$, the following holds. For every $P$-invariant function $g:\RR^N \to [0,1]$ (\Cref{def:sym-op}) satisfying
    \[
    \EE_{\PP_{\lambda''}}[g] \ge \beta,
    \]
    we have
    \[
    \PP_{\lambda'}\!\left(T_\rho g \ge \gamma\right) \ge 1-\eta.
    \]

    \item {\bf Scaling:} The parameters obey the asymptotics $\eps = \Omega(1)$, $\beta = o(1/\tau^2)$, $\gamma \le \beta$, $\gamma = \exp(-o(d))$, $\eta = o(1)$, and $\|L_{\le d}^{\PP_\lambda}\|_\QQ = O(1)$.
    
\end{itemize}
Then there is a test computable in time $N^{O(d)} \cdot T$ that strongly distinguishes (\Cref{def:strong-dist}) $\PP_{\lambda'}$ and $\QQ$. The algorithm requires access to the parameters $\lambda,\rho,d,t,\eps,\tau,\gamma$ as well as the order-$\le d$ moments of $X$, i.e., the values $\EE[X^\alpha]$ for $\alpha \in \NN^N$ with $|\alpha| \le d$.
\end{theorem}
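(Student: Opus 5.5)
The plan is to chain the three modular components of \Cref{sec:general}: \Cref{lem:direct-roc-to-ratio}, then \Cref{lem:ratio-to-one-sided} with $(\lambda,\lambda'')$, then \Cref{lem:one-sided-to-two-sided} with $(\lambda'',\lambda')$. Both of the latter lemmas use the same noise parameter $\rho=\lambda/\lambda''=\lambda''/\lambda'$, which is why $\lambda''=\sqrt{\lambda\lambda'}$ is chosen.

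\textbf{Step 1 (ROC to ratio).} Apply \Cref{lem:direct-roc-to-ratio} to $A$ and $B$ under $\PP_\lambda$, taking without loss of generality $\eps\le 1$. This gives the score $F=L^{\PP_\lambda}_{\le d}+\eps(A-B)$ with
\[
R^{\PP_\lambda}(F)\ge \|L^{\PP_\lambda}_{\le d}\|_\QQ+\Delta,\qquad \Delta:=\frac{\eps^2}{3\|L^{\PP_\lambda}_{\le d}\|_\QQ}.
\]
Since $\|L_{\le d}^{\PP_\lambda}\|_\QQ=O(1)$ and $\eps=\Omega(1)$, we get $\Delta=\Omega(1)$. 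Decompose $F=R+S$ with $S=L^{\PP_\lambda}_{\le d}$ and $R=\eps(A-B)$.

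For the boundedness hypothesis, note $|R|\le \eps\le 1$. We also need $\|F\|_\QQ$ bounded below. From $\EE_\QQ[F^2]\ge (\EE_\QQ F)^2$ and $\EE_\QQ F=1+\eps(\QQ(A=1)-\QQ(B=1))=1$, we get $\|F\|_\QQ\ge 1$, so $M=1$ works.

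For computability, $S$ is evaluated in time $N^{O(d)}$ from the order-$\le d$ moments of $X$. The reason is that $\EE_{\PP_\lambda}[h_\alpha]=\lambda^{|\alpha|}\EE[X^\alpha]/\sqrt{\alpha!}$ by \Cref{prop:hermite-shift}. The test $B$ is then computed by thresholding $S$ at $t$. So $R$ and $S$ both run in time $N^{O(d)}\cdot T$.

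\textbf{Step 2 (one-sided test).} \Cref{lem:ratio-to-one-sided} produces $A''$ in time $N^{O(d)}T$ with
\[
\delta:=\QQ(A''=1)\le \frac{C\rho^{2d}}{\Delta^2}=e^{-\Omega(d)},\qquad \PP_{\lambda''}(A''=1)\ge \frac{\Delta^2}{C\tau^2}.
\]
Since $\beta=o(1/\tau^2)$ and $\Delta=\Omega(1)$, for large $n$ the power $\Delta^2/(C\tau^2)$ is at least $\beta$. Hence $A''$ satisfies hypothesis~\ref{item:B-success} of \Cref{lem:one-sided-to-two-sided} with the theorem's $\beta$.

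\textbf{Step 3 (two-sided test).} Symmetry and large-set expansion are assumed directly, so \Cref{lem:one-sided-to-two-sided} yields $A'$ with
\[
\QQ(A'=1)\le m\delta,\qquad \PP_{\lambda'}(A'=1)\ge 1-\eta-e^{-m\gamma}.
\]
It remains to choose $m$, which is where the scaling must be checked carefully. Take $m=\lceil \gamma^{-1}\log(1/\gamma)\rceil$, or $m=\lceil\omega_n/\gamma\rceil$ for a slowly growing $\omega_n$. Then $m\gamma\to\infty$, since $\gamma\le\beta=o(1)$ forces $\log(1/\gamma)\to\infty$.

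The type I error is $m\delta\le e^{o(d)}\cdot e^{-\Omega(d)}=o(1)$, because $\gamma=e^{-o(d)}$ and $\delta\le C\rho^{2d}/\Delta^2$ with $\rho<1$ fixed and $d\to\infty$. The type II error is at most $\eta+e^{-m\gamma}=o(1)$.

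The runtime is $m\cdot N^{O(d)}T=e^{o(d)}N^{O(d)}T=N^{O(d)}T$. The main obstacle is this final bookkeeping, and I would verify it by tracking explicitly where each asymptotic assumption is used:
\begin{itemize}
\item $\|L_{\le d}^{\PP_\lambda}\|_\QQ=O(1)$ gives $\Delta=\Omega(1)$;
\item $\beta=o(1/\tau^2)$ matches the one-sided power to the expansion hypothesis;
\item $\gamma=e^{-o(d)}$ beats $\rho^{2d}$;
\item $\|F\|_\QQ\ge 1$ is what allows $M=1$.
\end{itemize}
The parameters the algorithm needs are $\lambda,\rho,d,t,\eps,\tau,\gamma$ together with the moments of $X$, and these match the list in the theorem.
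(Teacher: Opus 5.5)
Your proposal is correct and follows the paper's proof: you chain \Cref{lem:direct-roc-to-ratio}, then \Cref{lem:ratio-to-one-sided} with $R=\eps(A-B)$, $S=L^{\PP_\lambda}_{\le d}$ and $M=1$, then \Cref{lem:one-sided-to-two-sided}, and the asymptotic bookkeeping is the same. The differences are cosmetic and both valid: you get $\|F\|_\QQ\ge 1$ from Jensen and $\EE_\QQ F=1$ instead of the paper's triangle inequality with $\eps\le 1/2$, and you take $m=\lceil\gamma^{-1}\log(1/\gamma)\rceil$ instead of the paper's $m=\lceil 1/\sqrt{\gamma\delta}\rceil$.
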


We will see later that the spiked Wigner model satisfies the two main assumptions --- ratio-of-norms bound and large-set expansion --- with parameters that obey the required scalings; see \Cref{sec:ratio-of-norms,sec:lse}, as well as the proof of \Cref{prop:main-reduction-wigner} in \Cref{sec:wigner-pf-overview}.

One implication of our scaling assumptions is $\tau = \exp(o(d))$. This ensures that \Cref{lem:ratio-to-one-sided} will produce a nontrivial one-sided test, where the true positive rate is significantly larger than the false positive rate.

We have assumed the LDLR norm is bounded: $\|L_{\le d}^{\PP_\lambda}\|_\QQ = O(1)$. This is natural in light of the usual usage of the LDLR norm discussed in \Cref{sec:comp-np}. Namely, we require a low-degree lower bound for strong detection before we conclude anything more precise about weak detection. In the spiked Wigner model, the LDLR norm is bounded when $\lambda < 1$~\cite{ld-notes} (see~\cite{precise-error} for our exact version of the model).

\begin{remark}
We have assumed the algorithm has access to the parameter $t = t_n$. This is somewhat restrictive, as it may be nontrivial to compute the exact $t$ for which $\QQ(A=1) = \QQ(B=1)$ holds. One may prefer to think of this as a non-uniform algorithm (in the sense of circuit complexity) which receives the parameter $t$ as ``advice.'' It should also be possible to approximate $t$ to high accuracy by taking samples from $\QQ$, but we have opted not to analyze this in favor of a simpler proof.
\end{remark}

The proof is essentially a direct combination of the three general steps from \Cref{sec:general}.

\begin{proof}
Assume $\eps \le 1/2$ without loss of generality. By \Cref{lem:direct-roc-to-ratio}, the randomized score $F := L^{\PP_\lambda}_{\le d} + \eps(A-B)$ satisfies
\[
R^{\PP_\lambda}(F) \ge \|L^{\PP_\lambda}_{\le d}\|_\QQ + \frac{\eps^2}{3\|L^{\PP_\lambda}_{\le d}\|_\QQ}.
\]

Next we apply \Cref{lem:ratio-to-one-sided} with
\[ \Delta = \min\left\{1,\,\frac{\eps^2}{3\|L^{\PP_\lambda}_{\le d}\|_\QQ}\right\} \]
and $R = \eps(A-B)$, $S = L_{\le d}^{\PP_\lambda}$. To choose the parameter $M$, note that $|R(y)| \le \eps \le 1/2$, so we need a lower bound on $\|F\|_\QQ$. Write
\[ \|F\|_\QQ \ge \|S\|_\QQ - \|R\|_\QQ \ge 1 - \eps \ge 1/2, \]
where we have used \Cref{prop:ldlr-facts}\ref{item:ldlr-4}. Therefore we can take $M = 1$.

We now address how to compute $R$ and $S$. Since we assume access to the moments of $X$, the LDLR $L_{\le d}^{\PP_\lambda}$ can be computed from its definition~\eqref{eq:ldlr} in time $N^{O(d)}$, as \Cref{prop:hermite-shift} implies
\[ \EE_{\PP_\lambda}[h_\alpha] = \EE[h_\alpha(\lambda X + Z)] = \frac{\lambda^{|\alpha|}}{\sqrt{\alpha!}} \EE[X^\alpha]. \]
Since we assume access to the parameters $t$ and $\eps$, this means $R$ can be computed in time $N^{O(d)}+T$ and $S$ can be computed in time $N^{O(d)}$. Similarly, $\|L_{\le d}^{\PP_\lambda}\|_\QQ$ --- and thus $\Delta$ --- can be computed in time $N^{O(d)}$ using the formula in \Cref{prop:ldlr-facts}\ref{item:ldlr-3}.

From the conclusion of \Cref{lem:ratio-to-one-sided} we now have a test $A''$, computable in time $N^{O(d)} \cdot T$, such that
\[ \QQ(A''=1) \le O(\rho^{2d}) \qquad \text{and} \qquad \PP_{\lambda''}(A''=1) \ge \Omega(1/\tau^2), \]
where we have used $\Delta = \Omega(1)$ due to the assumptions $\eps = \Omega(1)$ and $\|L_{\le d}^{\PP_\lambda}\|_\QQ = O(1)$.

Next we apply \Cref{lem:one-sided-to-two-sided} to produce a test $A'$. The above behavior of $A''$ implies we can set $\delta = C\rho^{2d}$ for a constant $C > 0$ (depending on $\Delta$). Also, our assumption $\beta = o(1/\tau^2)$ implies $\PP_{\lambda''}(A''=1) \ge \beta$ for sufficiently large $n$, as required. Choose
\[ m = \lceil 1/\sqrt{\gamma\delta} \rceil. \]
Note that $\gamma\delta = o(1)$ because $\delta = \exp(-\Theta(d))$ from above, and $\gamma \le \beta = o(1/\tau^2) \le o(1)$. Therefore $m = \Theta(1/\sqrt{\gamma\delta})$. Also, using the assumption $\gamma = \exp(-o(d))$, we have $\delta = o(\gamma)$. Therefore $m\delta = o(1)$ and $m\gamma = \omega(1)$. Together with the assumption $\eta = o(1)$, the conclusion of \Cref{lem:one-sided-to-two-sided} gives $\QQ(A'=1) = o(1)$ and $\PP_{\lambda'}(A'=1) = 1-o(1)$ as desired. The runtime of $A'$ is $m N^{O(d)} \cdot T$, where $m = \exp(O(d)) \le N^{O(d)}$.
\end{proof}

\section{Spiked Wigner Application}
\label{sec:wigner-application}

This section is devoted to the proof of \Cref{thm:main-wig}, with some ingredients deferred to \Cref{app:ratio-norms} and \Cref{app:lse}. Note that the statement of \Cref{thm:main-wig} only becomes weaker for larger $d$, so we are free to make assumptions such as $d = o(\log n / \log \log n)$ without loss of generality.

\subsection{Equivalent additive Gaussian formulation}
\label{sec:equiv-gaussian}

It will be convenient to work with an equivalent variation of the spiked Wigner model, where redundant information is removed and the diagonal entries are rescaled so that the null distribution becomes a standard Gaussian vector.

\begin{definition}[Spiked Wigner model --- additive Gaussian formulation]
\label{def:wigner-additive}
The observations will be indexed by
\[
E := \{(i,j) : 1 \le i < j \le n\} \sqcup [n],
\]
corresponding to the upper-triangular entries of an $n\times n$ matrix, where $i \in [n]$ corresponds to the diagonal entry $(i,i)$. Let $N = |E| = n + \binom{n}{2}$, and identify $\RR^N$ with $\RR^E$. Consider the linear map $\Phi:\RR^{n\times n}\to \RR^{N}$ defined by
\[\Phi(Y) = \tri(\phi(Y)),\]
where $\phi:\RR^{n\times n}\to \RR^{n\times n}$ has the effect of scaling each diagonal entry by a factor of $1/\sqrt{2}$, and $\tri:\RR^{n\times n}\to \RR^{N}$ extracts the upper-triangular part of its input, in the sense that $\tri(Y)_{ij} = Y_{ij}$ for $i < j$ and $\tri(Y)_i = Y_{ii}$ for $i \in [n]$.  Now consider testing between the following two distributions:
\begin{itemize}
    \item Under $\QQ = \QQ_n$, $Y \in \RR^N$ is an i.i.d.\ standard Gaussian vector $Y \sim \cN(0,I_N)$.
    \item Under $\PP_\lambda = \PP_{\lambda,n}$, $Y = \lambda X + Z$ where $Z \sim \cN(0,I_N)$ and
    \begin{equation}\label{eq:X}
    X = \mathbf{1}\{x\neq 0\}\cdot \sqrt{n}\cdot \frac{\Phi(xx^\top)}{\|x\|^2}\quad\text{where }x\sim \pi^{\otimes n}.
    \end{equation}
\end{itemize}
As usual, we will be interested in the regime $n \to \infty$ with $\lambda$ and $\pi$ held fixed.
\end{definition}

This is an additive Gaussian model in the sense of \Cref{def:agm}. Furthermore, this model is equivalent to the original spiked Wigner model (\Cref{def:wig-intro}) via the linear transformation $\Phi$, which is invertible on the domain of symmetric $n \times n$ matrices. As a result, it will suffice to consider this new model for our proof of \Cref{thm:main-wig}, allowing us to apply the general machinery for additive Gaussian models from \Cref{sec:general}.

\subsection{Proof overview}
\label{sec:wigner-pf-overview}

Our main objective will be to establish the following by combining the ingredients from \Cref{sec:ratio-to-one-sided,sec:one-sided-to-two-sided}.

\begin{proposition}[Main reduction for spiked Wigner: from ratio to strong detection]
\label{prop:main-reduction-wigner}
Consider the spiked Wigner model as formulated in \Cref{def:wigner-additive}. Fix constants $0 < \lambda < \lambda' < 1$ and fix a prior $\pi$ that is mean-zero, variance-one, and supported on a finite subset of $\RR$. Fix an arbitrary sequence $d = d_n \to \infty$. Suppose we have a randomized function $F \randmap \RR^N \to \RR$ that is computable in time $T = T_n$ and always bounded by $F(y) \in [1/B,B]$ for a constant $B \ge 1$. For all sufficiently large $n$, suppose further that $F$ beats the LDLR ratio, i.e.,
\[ \frac{\EE_{\PP_\lambda}[F]}{\sqrt{\EE_\QQ[F^2]}} \ge \|L_{\le d}^{\PP_\lambda}\|_\QQ + \Delta \]
for a constant $\Delta > 0$. Then there is a test computable in time $n^{O(d)} \cdot T$ that strongly distinguishes $\PP_{\lambda'}$ and $\QQ$. The algorithm requires access to the parameters $\lambda, \lambda', \pi, d, B, \Delta$.
\end{proposition}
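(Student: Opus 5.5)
The plan is to chain \Cref{lem:ratio-to-one-sided} and \Cref{lem:one-sided-to-two-sided}, mirroring the proof of \Cref{thm:comp-np}. We skip the ROC-to-ratio step, since $F$ is given. Set $\lambda'' := \sqrt{\lambda\lambda'}$ and $\rho := \lambda/\lambda'' = \lambda''/\lambda' \in (0,1)$. We may shrink $d$ so that it grows slowly, e.g.\ $d = o(\log n/\log\log n)$; this is free because the conclusion only weakens as $d$ grows. We may also assume $\Delta \le 1$.

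For \Cref{lem:ratio-to-one-sided}, take $R = F$ and $S = 0$. Since $F \in [1/B,B]$, we have $\|F\|_\QQ \ge 1/B$, so $|R| \le B \le B^2 \|F\|_\QQ$ and we can take $M = B^2$, a constant. Because $\rho$, $M$ and $\Delta$ are constants, the output is a one-sided test $A''$ running in time $n^{O(d)} T$ with
\[ \QQ(A''=1) \le C\rho^{2d} \qquad\text{and}\qquad \PP_{\lambda''}(A''=1) \ge c/\tau^2 \]
for constants $c, C > 0$. This step requires the ratio-of-norms bound for the spiked Wigner model at SNRs $\lambda,\lambda'' < 1$ and degree $d$, with $\tau = \exp(o(d))$. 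I would import this from \Cref{sec:ratio-of-norms} (\Cref{app:ratio-norms}), where I expect $\tau$ to be bounded by a constant, or at worst $d^{O(1)}$, for slowly growing $d$.

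For \Cref{lem:one-sided-to-two-sided}, I would use as symmetry operator $P$ the uniformly random simultaneous row/column permutation acting on $E$. Both $\QQ$ and $\PP_{\lambda''}$ are invariant under it because $x$ is i.i.d., and it is a symmetry operator because composing with a fixed permutation leaves a uniform permutation uniform. I would take $\beta = c/\tau^2$ and invoke large-set expansion from \Cref{sec:lse} (\Cref{app:lse}). That result should state that every permutation-invariant $g:\RR^N\to[0,1]$ with $\EE_{\PP_{\lambda''}}[g] \ge \beta$ satisfies $\PP_{\lambda'}(T_\rho g \ge \gamma) \ge 1-\eta$, with $\eta = o(1)$ and $\gamma = \exp(-o(d))$. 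For this to hold, $\beta$ itself must be $\exp(-o(d))$, which it is since $\tau = \exp(o(d))$. Set $\delta = C\rho^{2d}$ and $m = \lceil (\gamma\delta)^{-1/2} \rceil$. Then $\delta = o(\gamma)$, so $m\delta \to 0$ and $m\gamma \to \infty$. The lemma then yields $\QQ(A'=1) = o(1)$ and $\PP_{\lambda'}(A'=1) = 1-o(1)$, in time $m\, n^{O(d)} T = n^{O(d)} T$ because $m = \exp(O(d))$.

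The main obstacle is establishing the two model-specific inputs, which I treat as black boxes here. The first is the ratio-of-norms bound with $\tau = \exp(o(d))$ while the LDLR norm stays a constant above 1. The second, and harder, is large-set expansion with the parameters matched: $\gamma$ must be no smaller than $\exp(-o(d))$ even when $\beta$ is as small as $1/\tau^2$. For the latter I would first try the following route. Condition on the spike; by finite support of $\pi$, permutation invariance reduces the planted law to a mixture over empirical type profiles of $x$. Then apply Gaussian hypercontractivity or a reverse-hypercontractive inequality for $T_\rho$ between the correlated Gaussian shifts $\lambda'' X$ and $\lambda' X$, for each typical type. Finally, I would check that $\lambda'' < 1$ keeps all quantities within the regime where these bounds apply.
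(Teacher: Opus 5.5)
Your proposal is correct and matches the paper's proof essentially step for step. The paper also applies \Cref{lem:ratio-to-one-sided} with $R=F$, $S=0$ and a constant $M$ depending on $B$, then \Cref{lem:one-sided-to-two-sided} with the random-permutation symmetry operator, large-set expansion from \Cref{lem:lse} (giving $\eta=\beta$ and $\gamma=\beta^{c}$), and $m=\lceil 1/\sqrt{\gamma\delta}\rceil$.

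One correction to your aside: the ratio-of-norms bound the paper proves is $\tau=d^{O(\sqrt{d})}$, not a constant or $d^{O(1)}$. The trace already embeds a scalar Gaussian-shift problem with an $\exp(\Theta(\sqrt{d}))$ blow-up, so a bounded $\tau$ is impossible. This does not hurt your argument, since you only use $\tau=\exp(o(d))$. With this $\tau$, $\beta=c/\tau^{2}\to 0$ automatically, and that is exactly what makes $\eta=\beta=o(1)$ when you invoke \Cref{lem:lse}.
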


Before proving this, we show how it implies our main result using machinery from~\cite{precise-error}.

\begin{proof}[Proof of \Cref{thm:main-wig}]
This is similar to the proof of~\cite[Corollary~2.22]{precise-error} (using the numbering in arXiv v3). Without loss of generality, assume $d = o(n / \log n)$. By assumption, we have a test $A$ with runtime $T$ that achieves some point $(\alpha,\beta)$ above the ROC curve $\phi_\lambda$. There is also a polynomial-time algorithm, namely LSS, that asymptotically achieves any given point on the curve $\phi_\lambda$ (this is stated as Theorem~2.10 in~\cite{precise-error} but follows from previous work~\cite{sk-ferro,weak-wigner}). With these two ingredients in hand, we can invoke~\cite[Proposition~2.20]{precise-error} (with properties of $\phi_\lambda$ provided by~\cite[Lemmas~2.12 and~2.19]{precise-error}) to obtain a function $F \randmap \RR^N \to \RR$ that achieves the ratio
\[ \frac{\EE_{\PP_\lambda}[F]}{\sqrt{\EE_\QQ[F^2]}} \ge (1-\lambda^2)^{-1/4} + \Omega(1). \]
The function $F$ has runtime $O(T) + n^{O(1)}$, requires knowledge of $\lambda,\alpha,\beta$, and may be randomized because it uses $A$ as a subroutine. By~\cite[Theorem~2.13]{precise-error} and the assumption $d = o(n / \log n)$,
\[ \|L_{\le d}^{\PP_\lambda}\|_\QQ = (1-\lambda^2)^{-1/4} + o(1), \]
so $F$ beats the LDLR ratio as required for \Cref{prop:main-reduction-wigner}. Furthermore, $F$ outputs values from a finite list of positive constants (see~\cite[Remark~2.21]{precise-error}), so $F(y) \in [1/B,B]$ for some constant $B \ge 1$. Now apply \Cref{prop:main-reduction-wigner} to obtain a test computable in time $n^{O(d)} \cdot T$ that strongly distinguishes $\PP_{\lambda'}$ and $\QQ$. The parameters $B,\Delta$ for \Cref{prop:main-reduction-wigner} are known to the algorithm, as they are constants that only depend on the known parameters $\lambda,\alpha,\beta$.
\end{proof}

Now we give the proof of \Cref{prop:main-reduction-wigner}, with some technical ingredients deferred to later sections, namely the verification of two properties for spiked Wigner: ratio-of-norms bound (\Cref{sec:ratio-of-norms}) and large-set expansion (\Cref{sec:lse}).

\begin{proof}[Proof of \Cref{prop:main-reduction-wigner}]
Without loss of generality, assume $d = o(\log n / \log \log n)$. Let $\lambda'' := \sqrt{\lambda \lambda'}$ and $\rho := \lambda/\lambda'' = \lambda''/\lambda' \in (0,1)$. First apply \Cref{lem:ratio-to-one-sided} with $R = F$ and $S = 0$. Due to the boundedness assumption on $F$, we can take the parameter $M \ge 1$ to be a constant depending on $B$. The ratio-of-norms bound comes from \Cref{lem:ratio-norms} below (noting that $\pi$ is subgaussian because it has finite support), with $\tau = d^{O(\sqrt{d})}$. The conclusion of \Cref{lem:ratio-to-one-sided} gives a test $A''$, computable in time $N^{O(d)} \cdot T$, such that
\[ \QQ(A''=1) \le O(\rho^{2d}) = \exp(-\Omega(d)) \qquad \text{and} \qquad \PP_{\lambda''}(A''=1) \ge \Omega(1/\tau^2) = d^{-O(\sqrt{d})}. \]

This allows us to apply \Cref{lem:one-sided-to-two-sided} with certain parameters $\delta = \exp(-\Theta(d))$ and $\beta = d^{-\Theta(\sqrt{d})}$. Here we use the symmetry operator $P$ that applies a uniformly random permutation to $[n]$ (see \Cref{def:perm-inv}). Large-set expansion comes from \Cref{lem:lse} below, which provides the parameters $\eta = \beta$ and $\gamma = \beta^c$ for a constant $c \ge 1$; so $\eta,\gamma$ are both $d^{-\Theta(\sqrt{d})}$. Note that $\gamma \le \beta = \eta \le o(1)$ and $\delta = o(\gamma)$, so similar to the proof of \Cref{thm:comp-np} we can choose $m = \lceil 1/\sqrt{\gamma\delta} \rceil$ and conclude that the resulting test $A'$ achieves $\QQ(A'=1)=o(1)$ and $\PP_{\lambda'}(A'=1)=1-o(1)$ as desired. Since $m = \exp(O(d))$, the runtime is $N^{O(d)} \cdot T$.
\end{proof}

\subsection{Ratio of norms}
\label{sec:ratio-of-norms}

The following ratio-of-norms bound was used in the proof of \Cref{prop:main-reduction-wigner} above. This holds for the class of subgaussian priors (see \Cref{sec:subg}), which includes all finitely-supported priors.

\begin{lemma}[Ratio-of-norms bound for spiked Wigner]
\label{lem:ratio-norms}
Consider the spiked Wigner model as formulated in \Cref{def:wigner-additive}. Fix $0 \le \lambda\le 1$ and fix a prior $\pi$ that is mean-zero, variance-one, and subgaussian (\Cref{def:subg}). Suppose $d = o(\log n/\log\log n)$. Then for every degree-$\le d$ polynomial $p:\RR^N \to \RR$,
\[
\|p\|_{\PP_\lambda} \le d^{O(\sqrt{d})}\cdot \|p\|_{\QQ}.
\]
\end{lemma}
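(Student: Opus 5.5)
We plan to bound $\EE_{\PP_\lambda}[p^2]$ directly through the Hermite expansion of $p$. Write $p=\sum_{|\alpha|\le d}\hat p_\alpha h_\alpha$, so that $\|p\|_\QQ^2=\|\hat p\|_2^2$ and
\[ \EE_{\PP_\lambda}[p^2]=\hat p^\top K\hat p, \qquad K_{\alpha\beta}:=\EE_{\PP_\lambda}[h_\alpha h_\beta]. \]
It therefore suffices to bound the operator norm of the Gram matrix $K$ (indexed by $|\alpha|,|\beta|\le d$) by $d^{O(\sqrt d)}$.

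\textbf{Step 1: condition on the signal and expand.} Conditioning on $X$ and using $Y=\lambda X+Z$ together with \Cref{prop:hermite-shift}, we expand each $h_\alpha(\lambda X+Z)$ in $h_\gamma(Z)$ with coefficients $\lambda^{|\alpha-\gamma|}\sqrt{\gamma!/\alpha!}\binom{\alpha}{\gamma}X^{\alpha-\gamma}$. For fixed $X$, let $v_\gamma(X):=\sum_\alpha \hat p_\alpha c_{\alpha\gamma}(X)$. Orthonormality in $Z$ then gives
\[ \EE_{\PP_\lambda}[p^2]=\EE_X\sum_{|\gamma|\le d} v_\gamma(X)^2. \]
So we must control, uniformly over unit $\hat p$, the second moment of a linear transform of $\hat p$ whose entries are monomials in $X$.

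\textbf{Step 2: use the rank-one structure.} Entries of $X$ are $\sqrt n\,x_ix_j/\|x\|^2\approx x_ix_j/\sqrt n$. A monomial $X^{\alpha-\gamma}$ with $|\alpha-\gamma|=k$ edges corresponds to a multigraph $H$ on $[n]$ and scales like $n^{-k/2}\prod_i x_i^{\deg_H(i)}$. Applying Cauchy--Schwarz in the sum over $\alpha$ alone loses a factor of the number of terms, which is too much. Instead, we group $\alpha$ by the shape of $\alpha-\gamma$ and exploit the fact that $\EE\prod x_i^{\deg(i)}$ vanishes unless every vertex has even degree (mean zero) and is otherwise bounded by $\prod (C\deg(i))^{\deg(i)/2}$ by subgaussianity. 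The normalization $\|x\|^2$ is handled first: on the event $\|x\|^2\in[n/2,2n]$ we replace it by $n$, and the complementary event has probability $e^{-\Omega(n)}$, which is absorbed via Cauchy--Schwarz and hypercontractivity (\Cref{prop:gauss-bonami}) since $d=o(\log n/\log\log n)$.

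\textbf{Step 3: the key combinatorial bound (main obstacle).} After expanding the square in Step 1, we reduce to bounding the operator norm of a matrix indexed by pairs $(\alpha,\beta)$, whose entries are sums over $\gamma$ of $\lambda^{|\alpha-\gamma|+|\beta-\gamma|}\,n^{-(|\alpha-\gamma|+|\beta-\gamma|)/2}\,\EE[x\text{-moment}]$ times combinatorial factors. We plan to decompose this matrix by the isomorphism type of the ``difference graphs'' $\alpha-\gamma$ and $\beta-\gamma$, and to bound each piece with a Schur test (max row sum), by counting how many $\beta$ attach to a given $\alpha$ and giving up $n^{1/2}$ per new vertex against $n^{-1/2}$ per edge. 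Pieces in which every vertex of the symmetric difference has even degree and the graph is a union of cycles and paths give $O(1)$ each, because $\lambda\le1$. This is where the LDLR norm $(1-\lambda^2)^{-1/4}$ comes from, and the case $\lambda$ close to $1$ is the delicate one. The number of shapes with $k\le d$ edges, together with the factorial moment factors $\prod\deg(i)^{\deg(i)/2}$, supplies the loss. The crux is to show that high-degree vertices, the only source of large moment factors, cost enough powers of $n$ to be negligible unless the total degree excess is $O(\sqrt d)$, which is where the $d^{O(\sqrt d)}$ comes from. We would first attempt to split according to whether the maximum vertex multiplicity exceeds $\sqrt d$: below this threshold the moment factor is at most $d^{O(\sqrt d)}$ per component, and above it the $n$-power savings dominate, using $d^{d}=n^{o(1)}$.
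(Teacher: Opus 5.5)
There is a genuine gap. Step~3, which you rightly call the crux, is not carried out, and the mechanism you sketch for it is not the one that governs the bound.

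\emph{Where the $d^{O(\sqrt d)}$ comes from.} Vertices of degree other than $2$ are never the bottleneck. Any vertex of degree $1$ or $\ge 3$ in $\alpha+\beta-2\kappa$ forces an extra factor of at least $n^{-1/2}$ beyond what the count of choices for $\beta$ pays for. All moment and combinatorial factors are $d^{O(d)}=n^{o(1)}$ when $d=o(\log n/\log\log n)$, so these configurations together contribute $o(1)$, and no threshold at multiplicity $\sqrt d$ is needed. The $\Theta(1)$ interactions, and the actual source of $d^{O(\sqrt d)}$, are pairs $\alpha,\beta$ that differ by exchanging whole cycle components of $\alpha$ (self-loops, double edges, $r$-cycles) for vertex-disjoint cycles on fresh vertices.

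\emph{Why a row-sum bound fails as set up.} A row-sum (Schur) bound does not work on $K$ or on your unbalanced pieces. The row $\alpha=\emptyset$ contains $\EE_{\PP_\lambda}[Y_i]\approx\lambda/\sqrt{2n}$ for each of the $n$ diagonal coordinates, so its $\ell_1$ norm is about $\lambda\sqrt{n/2}$. More generally, trading $c_1$ self-loops for $c_2\neq c_1$ self-loops gives row and column sums differing by a factor $n^{c_2-c_1}$. Your ``$n^{1/2}$ per new vertex'' bookkeeping is valid only for a balanced or geometric-mean Schur test. You would also have to aggregate over $d^{\Theta(d)}$ shapes without losing more than $d^{O(\sqrt d)}$.

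\emph{What the paper does instead.} It splits $p$ itself, not $K$, by the signature $(|\alpha|,|V(\alpha)|,C^\alpha)$, where $C^\alpha$ is the profile of cycle components. There are only $\exp(O(\sqrt d))$ signatures (integer partitions, Hardy--Ramanujan), so the triangle inequality plus Cauchy--Schwarz across classes is cheap. Within a class, $|V(\beta)\setminus V(\alpha)|=|V(\alpha)\setminus V(\beta)|=k$ is forced, and the row sum splits into two parts. The first is cycle swaps: at most $n^k\prod_r\sum_c C^\alpha(r)^c/(c!)^2\le n^k\exp(O(\sqrt{d\log d}))$ of them, each $O(n^{-k})$. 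The second is a remainder where a degree count gives $|\gamma|+2v_1\ge 2k+1$, hence total $d^{O(d)}n^{-1/2}=o(1)$.

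\emph{The gap in Step~2.} The whole accounting above relies on each degree-$1$ vertex of $\gamma$ costing an extra $n^{-1}$. Without it, exchanging an edge $\{1,2\}$ of $\alpha$ for a disjoint edge $\{3,4\}$ already gives a row sum of order $n^2\cdot n^{-1}=n$. (Your claim that moments vanish at odd-degree vertices is false anyway when $\EE_\pi[u^3]\neq 0$, though degree $\ge 3$ is harmless.) With the normalized spike the cancellation is not exact. Restricting to $\{\|x\|^2\in[n/2,2n]\}$ does not let you replace $\|x\|^2$ by $n$: the factor $(n/\|x\|^2)^{|\gamma|}$ remains and couples the coordinates $x_i$, $i\in V(\gamma)$, and the event itself depends on those coordinates. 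So the mean-zero cancellation is not available as stated. The paper gets $|\EE X^\gamma|\le d^{O(d)}n^{-|\gamma|/2-v_1}$ by writing $u^{-r}=\frac{1}{(r-1)!}\int_0^\infty s^{r-1}e^{-su}\,ds$, which factorizes over coordinates and gives $|\EE_{u\sim\pi}[u\,e^{-su^2}]|\le s\,\EE_{u\sim\pi}|u|^3$.

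Finally, nothing is delicate as $\lambda\to 1$ and the LDLR norm plays no role. The proof uses only $\lambda^{|\gamma|}\le 1$, and the bound holds uniformly up to $\lambda=1$.
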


The proof is deferred to \Cref{app:ratio-norms} but we give a brief summary here. Consider the Hermite expansion of an arbitrary degree-$\le d$ polynomial: $p = \sum_{|\alpha| \le d} \hat{p}_\alpha h_\alpha$. Note that
\[ \frac{\|p\|^2_{\PP_\lambda}}{\|p\|^2_\QQ} = \frac{\hat{p}^\top M \hat{p}}{\|\hat{p}\|_2^2} \]
where $M = (M_{\alpha\beta})_{|\alpha|,|\beta| \le d}$ is the ``Hermite Gram matrix'' with entries
\[ M_{\alpha\beta} := \EE_{\PP_\lambda}[h_\alpha h_\beta]. \]
Therefore, the maximum value of $\|p\|^2_{\PP_\lambda}/\|p\|^2_\QQ$ is the spectral norm of $M$. One tractable upper bound on the spectral norm is the maximum $\ell_1$-norm of any row of $M$. This bound is not quite good enough for our purposes, but a modification of this strategy works: we will first partition the terms of $p$ based on their structure, and then apply the $\ell_1$ bound separately within each group. The remaining details are deferred to \Cref{app:ratio-norms}.

We make some remarks on the conclusion of \Cref{lem:ratio-norms}. As mentioned earlier, it is important for our downstream application that the ``blow-up factor'' $d^{O(\sqrt{d})}$ is subexponential in $d$, i.e.\ $\exp(o(d))$, since this must ``compete'' against $\rho^{2d}$ in~\eqref{eq:one-sided-err-prob}. The assumption $\lambda \le 1$ is essential here: when $\lambda > 1$, the ratio $\|p\|_{\PP_\lambda}/\|p\|_\QQ$ can be exponentially large in $d$, i.e., $\Omega(\lambda^d)$. For instance, this is achieved by the ``$d$-cycle polynomial''
\[ p(Y) = \sum_{\substack{i_1,i_2,\ldots,i_d \in [n] \\ \mathrm{distinct}}} Y_{i_1,i_2} Y_{i_2,i_3} \cdots Y_{i_d,i_1}. \]

In \Cref{lem:ratio-norms} we have assumed $d = o(\log n / \log \log n)$ for convenience, as we are primarily interested in the case where $d$ grows arbitrarily slowly with $n$. It is likely that this requirement on $d$ can be relaxed --- perhaps to $d = o(n)$ --- with a more careful proof, but we have not attempted this here.

\begin{remark}[The exponent $\sqrt{d}$]
The exponent $\sqrt{d}$ that we obtain in the ``blow-up factor'' is essentially sharp, and its origin can be understood by comparison to a scalar Gaussian problem where $\QQ$ is $\cN(0,1)$ and $\PP$ is $\cN(\mu,1)$ for a constant $\mu > 0$. In this scalar problem, the maximum ratio $\|p\|_\PP/\|p\|_\QQ$ over degree-$d$ polynomials scales as $\exp(\Theta(\sqrt{d}))$, as we will justify shortly. The spiked Wigner problem ``contains'' this scalar problem by considering the trace of the observed matrix, so certainly the Wigner problem cannot have a smaller blow-up factor than the scalar problem. The fact that both have similar blow-up factors (when $\lambda \le 1$) reflects the intuition that, to low-degree polynomials, the Wigner problem ``looks like'' the scalar one. This intuition is consistent with the fact that LSS, the conjectured-optimal algorithm for the Wigner problem, behaves like the scalar problem (when $\lambda \le 1$).

We now sketch the analysis of the scalar Gaussian problem claimed above. Expand an arbitrary $p$ in the Hermite basis: $p(z) = \sum_{k=0}^d \hat{p}_k h_k(z)$. Now $\|p\|_\QQ = \|\hat{p}\|_2$ and
\[ \|p\|_\PP \le \sum_{k=0}^d \hat{p}_k \|h_k\|_\PP \le \|\hat{p}_k\|_2 \cdot \sqrt{\sum_{k=0}^d \|h_k\|_\PP^2}. \]
Up to a factor of $\sqrt{d+1}$, the blow-up factor that we seek is the maximum of the values $\|h_k\|_\PP$. Write $y = \mu + z$ with $z \sim \cN(0,1)$ and apply \Cref{prop:hermite-shift} along with Hermite orthonormality to arrive at
\[ \|h_k\|_\PP^2 = \EE[h_k(\mu+z)^2] = \sum_{\ell=0}^k \frac{\ell!}{k!} \binom{k}{\ell}^2 \mu^{2(k-\ell)}. \]
This has value $\exp(\Theta(\sqrt{k}))$, with the dominant contribution coming from $\ell \approx k - \mu \sqrt{k}$.
\end{remark}

\subsection{Large-set expansion}
\label{sec:lse}

We will now state the large-set expansion property for spiked Wigner, which is the final ingredient that was used in the proof of \Cref{prop:main-reduction-wigner} above. This will apply to permutation-invariant functions, which we first define.

\begin{definition}[Permutation invariance]
\label{def:perm-inv}
Consider the spiked Wigner model as formulated in \Cref{def:wigner-additive}, where the observations $Y \in \RR^N$ are indexed by either $i$ or $(i,j)$ with $i < j$. A permutation $\sigma \in S_n$ acts on $y \in \RR^N$ by permuting the indices $1,\ldots,n$, namely $(\sigma \cdot y)_i = y_{\sigma^{-1}(i)}$ and $(\sigma \cdot y)_{i,j} = y_{\sigma^{-1}(i),\sigma^{-1}(j)}$ (with the convention $y_{i,j} := y_{j,i}$ when $i > j$).

A function $g: \RR^N \to \RR$ is called \emph{permutation-invariant} if $g(y) = g(\sigma \cdot y)$ for every $y \in \RR^N$ and every $\sigma \in S_n$.
\end{definition}

\begin{lemma}[Large-set expansion for spiked Wigner]
\label{lem:lse}
Consider the spiked Wigner model as formulated in \Cref{def:wigner-additive}. Fix $0 < \lambda < \lambda'$ and write $\rho := \lambda/\lambda'$. Fix a prior $\pi$ that is mean-zero, variance-one, and supported on a finite set. Then there exist constants $c,\kappa,\eps_0>0$ depending only on $\lambda,\lambda',\pi$, such that the following holds for all sufficiently large $n$. For any permutation-invariant $g:\RR^N\to[0,1]$ with $\EE_{\PP_{\lambda}}[g]\ge \eps$ for some $2^{-\kappa n}\le \eps \le \eps_0$, we have
\[
\PP_{\lambda'}\!\left(T_\rho g\ge \eps^{c}\right)\ge 1-\eps.
\]
\end{lemma}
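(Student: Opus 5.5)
Since $\pi$ has finite support, I will exploit the conditional structure of the planted model. Condition on the spike $x$. Because $g$ is permutation-invariant and $\PP_\lambda$ depends on $x$ only through $xx^\top/\|x\|^2$ up to relabeling, $\EE[g \mid x]$ depends only on the \emph{empirical type} of $x$, i.e.\ the vector of counts $(n_a)_{a \in \mathrm{supp}(\pi)}$. Write $\PP_\lambda^{x}$ for the law of $\lambda X(x) + Z$ with $x$ fixed, and $G_\lambda(x) := \EE_{\PP^{x}_\lambda}[g]$. The key coupling is
\[
T_\rho g(Y) \;=\; \EE_{G}\, g\bigl(\rho Y + \sqrt{1-\rho^2}G\bigr),
\]
and for $Y = \lambda' X(x) + Z$ with fixed $x$, the argument $\rho Y + \sqrt{1-\rho^2}G$ is distributed as $\lambda X(x) + Z'$. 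So $\EE_{Z}[T_\rho g(\lambda'X(x)+Z)] = G_\lambda(x)$. In other words, $T_\rho g(Y)$ is exactly the conditional probability for an Ornstein--Uhlenbeck-correlated copy \emph{sharing the same spike}. The plan has three steps.

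\textbf{Step 1 (spike-level averaging).} From $\EE_{\PP_\lambda}[g] \ge \eps$ and the fact that there are only $n^{O(1)}$ types (since the support is finite), together with large deviations of the type around $\pi$, I want to show that $G_\lambda(x) \ge \eps^{c_1}$ for all $x$ in a set of types of $\pi^{\otimes n}$-probability $\ge 1-\eps/2$. The mechanism is transporting mass between types. Two spikes whose types differ by $k$ counts differ (after permutation, which $g$ ignores) in $k$ coordinates, which changes $X$ by $O(\sqrt{k/n})$ in Frobenius norm. Hence $\mathrm{KL}(\PP^{x}_\lambda\|\PP^{x'}_\lambda) = O(k\lambda^2/n)$ up to constants, and a change-of-measure/Hölder argument gives $G_\lambda(x') \ge G_\lambda(x)^{1+o(1)}\cdot e^{-O(k)}$. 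Starting from one type where $G_\lambda \ge \eps/2$ (which exists by averaging), I would reach typical types within distance $k \approx \sqrt{n\log(1/\eps)}$. This is likely too crude, so the real plan is different. I would combine the fact that the set of "bad" types has small probability with a hypercontractivity inequality for the OU semigroup in the Gaussian coordinates, applied at fixed $x$:
\[
\|T_\rho h\|_{L^{q}} \le \|h\|_{L^{p}} \quad\text{with } q-1 = (p-1)/\rho^2 ,
\]
used in its reverse form (Borell's reverse hypercontractivity) for $h = g$ under the \emph{shifted} Gaussian measure centered at $\lambda' X(x)$.

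\textbf{Step 2 (reverse hypercontractivity at fixed spike).} Fix $x$ and recenter at $\lambda X(x)$: the map $Z \mapsto$ argument is an OU step from $\lambda' X + Z$ to $\lambda X + Z'$, but the means differ. I would absorb the mean shift $(\lambda'-\rho\lambda')X = 0$; indeed $\rho\lambda' = \lambda''$. So I should instead apply the lemma's $\rho = \lambda/\lambda'$ directly, which gives exactly mean $\lambda X$. Then the pair (argument, $Y$) is a $\rho$-correlated Gaussian pair around the means $(\lambda X, \lambda' X)$, and after subtracting means it is a standard OU coupling up to a deterministic shift. Borell's reverse hypercontractivity (equivalently Mossel–O'Donnell–et al.'s small-set version) gives
\[
\Pr_{Z}\bigl(T_\rho g \ge G_\lambda(x)^{c}\bigr) \ge 1 - G_\lambda(x)^{c'} ,
\]
or more cleanly, a lower bound $\EE_Z[(T_\rho g)^{p}]^{1/p} \ge G_\lambda(x)^{1/q}$ for negative $p$, from which a lower-tail Markov bound follows. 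The deterministic shift must be handled by Girsanov, costing a factor $\exp(O(\|X\|^2)) = O(1)$ since $\|X\|_2 = O(1)$. That is the crucial reason to work at fixed $x$.

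\textbf{Step 3 (combine).} Integrate over $x$. For typical $x$ with $G_\lambda(x) \ge \eps^{c_1}$, we get $T_\rho g \ge \eps^{c}$ except with probability $\eps/2$. Atypical $x$ contribute at most $\eps/2$. The lower limit $\eps \ge 2^{-\kappa n}$ ensures that type-probability losses of order $e^{-\Theta(n)}$, together with the polynomial count of types, are dominated.

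The main obstacle is Step 1: showing that a function that is large on average over spikes is not concentrated on a rare set of types. I would handle it by additionally noting that $T_\rho$ also mixes spikes, since $\PP_\lambda$ with the spike perturbed is contiguous. Concretely, I would apply reverse hypercontractivity jointly to the product measure on $(x, Z)$, treating $x$ as a finite-alphabet product coordinate with a noise operator that resamples each $x_i$ with probability $1-\rho'$. This works because the multinomial type distribution satisfies reverse hypercontractivity on finite product spaces (Mossel–Oleszkiewicz–Sen), and it is where the finite support assumption enters.
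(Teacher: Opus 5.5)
\textbf{The gap is in Step~1.} Your Step~2 is essentially the paper's fixed-spike lemma (Lemma~\ref{lem:gaussian-expansion}), but no Girsanov step is needed, and none would be affordable. Since $\rho\lambda'=\lambda$, setting $\widetilde g(z):=g(\mu_\lambda(x)+z)$ gives $T_\rho g(\mu_{\lambda'}(x)+z)=T_\rho\widetilde g(z)$ exactly. If a shift did remain, it would cost $\exp(\Theta(n))$, because in this normalization $\|X\|_2=\sqrt{n/2}$, not $O(1)$.

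Your reduction of the lemma to Step~1 is sound. It suffices to know $\EE_{\PP_{\lambda,x}}[g]\ge\eps^{c_1}$ for every $B$-typical $x$ with $B^2\asymp\log(1/\eps)$, and then apply the fixed-spike inequality with a large failure exponent. But that statement is the heart of the lemma, and neither of your mechanisms proves it.

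\emph{The fixed-spike change of measure fails quantitatively.} Spikes differing in $k$ coordinates give means differing in about $kn$ entries of size $\Theta(1/\sqrt n)$. So the divergence is $\Theta(k)$; your $O(k/n)$ has the same normalization slip. With $k\asymp B\sqrt n$, the cost $e^{-\Theta(B\sqrt n)}$ is useless.

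\emph{Joint reverse hypercontractivity controls the wrong quantity.} An operator that also resamples $x$ lower-bounds $\EE_{\tilde x\mid x}\,T_\rho g(\lambda'X(\tilde x)+Z)$. The lemma instead concerns $T_\rho g(\lambda'X(x)+Z)$, where the spike $x$ is \emph{not} resampled. Since $\tilde x$ differs from $x$ in $\Theta(n)$ coordinates, $\PP_{\lambda,x}$ and $\PP_{\lambda,\tilde x}$ are nearly singular, and an average over $\tilde x$ cannot be turned into a pointwise high-probability bound. More structurally, reverse hypercontractivity on the type space cannot rule out $x\mapsto\EE_{\PP_{\lambda,x}}[g]$ being supported on a single type of probability $n^{-\Theta(1)}$. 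Ruling that out requires showing that the laws $\PP_{\lambda,x}$ for nearby types are statistically close.

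\textbf{The missing idea is the paper's Claim~\ref{claim:transfer}.} You need a comparison across typical types that costs only $\exp(O(B^2))=\mathrm{poly}(1/\eps)$. The claim states: for $B$-typical $x,\hat x$ and permutation-invariant $A$, $\PP_{\lambda,x}(A)\le e^{CB^2}\PP_{\lambda,\hat x}(A)^{1/2}$.

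The trick is not to compare $\PP_{\lambda,\hat x}$ with another fixed spike. Instead, compare it with the mixture over spikes obtained from $\hat x$ by rewriting a \emph{uniformly random} set $R$ of $O(B\sqrt n)$ coordinates so that the type becomes $\nu_x$.
\begin{itemize}
\item By permutation invariance, this mixture gives $A$ the same probability as $\PP_{\lambda,x}$ does.
\item Its second moment relative to $\PP_{\lambda,\hat x}$ is $\exp(O(B^2))\,\EE\exp(O(|R\cap R'|))=\exp(O(B^2))$. This holds because two independent random subsets overlap in only $O(|R|^2/n)=O(B^2)$ coordinates.
\end{itemize}
Finite support is used here (and in the typicality bound), not through reverse hypercontractivity on the multinomial.

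With this claim your ordering goes through. Discarding atypical spikes (mass $\le 2me^{-2B^2}\le\eps/2$) gives a typical $x_*$ with $\EE_{\PP_{\lambda,x_*}}[g]\ge\eps/2$. Cauchy--Schwarz with $g^2\le g$ then gives $\EE_{\PP_{\lambda,x}}[g]\ge e^{-2CB^2}(\eps/2)^2=\eps^{O(1)}$ for every typical $x$. The paper orders things the other way: it applies Lemma~\ref{lem:gaussian-expansion} at $x_*$ and then transfers the small failure event under $\lambda'$ to all typical spikes.
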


\noindent The proof is deferred to \Cref{app:lse} but we give a brief summary here. As we will see in \Cref{sec:fixed-signal-lse}, this expansion property is already known to hold for the null distribution $\cN(0,I_N)$ and therefore holds for a variant of the spiked Wigner model where the spike $x$ is fixed to one deterministic vector. Due to our assumption of permutation invariance, expansion also follows for a variant of the model where the empirical distribution of $x$ is fixed but the entries are permuted at random. The main technical challenge is to handle the variability in the empirical distribution. To do this, we show that the spiked distributions induced by different ``typical'' empirical distributions are statistically ``close'' to each other. This involves computing a particular chi-squared divergence between one distribution with a fixed spike, and one with a random spike from a fixed empirical distribution. The details for how to sample the random spike are somewhat subtle, and we refer to \Cref{app:lse} for the full argument. We note that this argument is the only part of our proof where we use the assumption that $\pi$ has finite support.

\appendix

\section{Ratio of Norms for Spiked Wigner}
\label{app:ratio-norms}

This section is devoted to the proof of Lemma~\ref{lem:ratio-norms}.

\subsection{Subgaussian random variables}
\label{sec:subg}

In this section we will assume $\pi$ is \emph{subgaussian}, which has various equivalent definitions (e.g.~\cite[Proposition~2.6.1]{vershynin-hdp}). We will use the following definition in terms of moments.

\begin{definition}\label{def:subg}
A real-valued random variable $X$ is called \emph{subgaussian} if there exists $K > 0$ such that
\[
\EE[|X|^p] \le (K \sqrt{p})^p \qquad \text{for all } p \ge 1.
\]
\end{definition}

We will need the following tail bound for a sum of squares of subgaussian random variables. This can be found in~\cite{vershynin-hdp}, specifically the Bernstein inequality for subexponential random variables (Theorem~2.9.1) together with the fact that the square of a subgaussian random variable is subexponential (Lemma~2.8.5).

\begin{proposition}
\label{prop:subgaussian-square-lower-tail}
Let $u_1,\ldots,u_m$ be i.i.d.\ from a subgaussian law $\pi$ with mean 0 and variance 1. Then there exists a constant $c=c(\pi)>0$ such that for every $0 \le t \le m$,
\[
\Pr\!\left[\sum_{i=1}^m u_i^2 \le m-t\right]
\le \exp\!\left(-c \, \frac{t^2}{m}\right).
\]
\end{proposition}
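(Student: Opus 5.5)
The inequality only concerns the lower tail of $\sum_i u_i^2$, and this lets me avoid the full subexponential Bernstein machinery. I would run a one-sided Chernoff argument that uses nothing beyond a finite fourth moment, which subgaussianity supplies (\Cref{def:subg} with $p=4$ gives $\EE[u^4]\le 16K^4$). The plan is: (1) bound the moment generating function of $-u^2$; (2) apply Markov's inequality to $e^{-s\sum_i u_i^2}$; (3) optimize over $s\ge 0$.

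For step (1), use the elementary inequality $e^{-x}\le 1-x+x^2/2$, valid for all $x\ge 0$. Fix $s\ge 0$ and take expectations with $x=su^2$, using $\EE[u^2]=1$:
\[
\EE\bigl[e^{-su^2}\bigr]\le 1-s+a s^2, \qquad a:=\tfrac12\EE[u^4].
\]
The right-hand side is always positive, since $a\ge \tfrac12(\EE u^2)^2=\tfrac12$ makes the quadratic's discriminant $1-4a\le -1<0$. So we may take logarithms, and $\log(1+y)\le y$ gives $\log\EE[e^{-su^2}]\le -s+as^2$.

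For steps (2) and (3), independence and Markov's inequality give, for every $s\ge 0$,
\[
\Pr\Bigl[\sum_{i=1}^m u_i^2\le m-t\Bigr]\le e^{s(m-t)}\prod_{i=1}^m\EE\bigl[e^{-su_i^2}\bigr]\le \exp\bigl(s(m-t)-sm+ams^2\bigr)=\exp\bigl(-st+ams^2\bigr).
\]
Choosing $s=t/(2am)\ge 0$ yields the bound $\exp(-t^2/(4am))$. This is the claim with $c=1/(4a)=1/(2\EE_\pi[u^4])$, a constant depending only on $\pi$.

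This argument does not even need the restriction $t\le m$, which in the subexponential-Bernstein route is exactly what converts $\min\{t^2/m,\,t\}$ into $t^2/m$. As a fallback I could cite Bernstein's inequality for the centered subexponential variables $u_i^2-1$, as the text suggests: it gives $\exp(-c\min\{t^2/m,t\})$, and $t\le m$ finishes. The only mildly delicate point in either route is the validity of the one-sided MGF bound, namely that the quadratic upper bound stays positive so the logarithm step is legitimate; I checked this above, so I do not expect a real obstacle.
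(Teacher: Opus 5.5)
Your proof is correct, and it takes a genuinely different route from the paper's. The paper does not argue directly. It cites Bernstein's inequality for subexponential variables (applied to the centered variables $u_i^2-1$, using that the square of a subgaussian variable is subexponential). That gives a bound of the form $\exp(-c\min\{t^2/m,\,t\})$, and the restriction $t\le m$ is then used to reduce it to $t^2/m$.

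Your one-sided Chernoff argument uses only that $u^2\ge 0$. The inequality $e^{-x}\le 1-x+x^2/2$ on $x\ge 0$ gives a sub-Gaussian bound on the moment generating function of $-u^2$ depending only on $\EE[u^4]$. As a result you get the lower tail for all $t\ge 0$, with the explicit constant $c=1/(2\EE_\pi[u^4])$, assuming only a finite fourth moment. Your positivity check on the quadratic is not actually needed: $1+y\le e^y$ holds for every real $y$, so $\EE[e^{-su^2}]\le e^{-s+as^2}$ directly.

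The paper's route also yields upper-tail concentration, which is not needed here. Yours is self-contained, sharper in its dependence on $t$, and assumes less. Note, though, that this does not remove the subgaussian hypothesis from the overall argument: subgaussian moment growth is still used for the $d^{O(d)}$ bound in \Cref{lem:normalized-monomial-bound}.
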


\subsection{Proof overview}

The high-level structure of the proof is as follows. We will decompose $p = \sum_{\sigma}p_\sigma$ into a small number of terms, where $p_\sigma$ contains all terms of $p$ in the Hermite basis corresponding to a particular ``signature'' $\sigma$. Since the number of terms in the above decomposition is small enough, it will suffice to prove the ratio-of-norms inequality for each individual $p_\sigma$. This is equivalent to bounding the spectral norm of a certain Gram matrix $M^\sigma$, which we will control using a row-sum bound.

Let us now formally define the decomposition. We identify a multi-index $\alpha \in \NN^E$ with the undirected multigraph on vertex set $[n]$ having $\alpha_{ij}$ copies of the edge $\{i,j\}$ for all distinct pairs of vertices $i,j$, and $\alpha_i$ copies of the self-loop at $i$ for each vertex $i$.

For such a multigraph $\alpha$, we use the following notation.
\begin{itemize}
    \item $V(\alpha) \subseteq [n]$ is the set of non-isolated vertices.
    \item $C^\alpha : \NN_{\geq 1} \to \NN$ records the cycle components of $\alpha$: for each $r \ge 1$, $C^\alpha(r)$ is the number of connected components of $\alpha$ that are simple cycles on $r$ vertices. Here a self-loop counts as a simple $1$-cycle and a double edge counts as a simple $2$-cycle.
    \item The \emph{signature} of $\alpha$ is the $3$-tuple
    \[
    s(\alpha) := \bigl(|\alpha|,\ |V(\alpha)|,\ C^\alpha\bigr).
    \]
    \item $\core(\alpha)$ is the multigraph obtained from $\alpha$ after deleting every connected component that is a simple cycle.
\end{itemize}
Define the set of relevant signatures
\[
\Sigma_d := \{\sigma : \sigma = s(\alpha) \text{ for some } \alpha \in \NN^E \text{ with } |\alpha| \le d\}.
\]

\begin{lemma}\label{lem:ratio-signature-count}
We have $|\Sigma_d| \le \exp(O(\sqrt{d}))$.
\end{lemma}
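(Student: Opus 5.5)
We bound $|\Sigma_d|$ by counting the possible values of each of the three coordinates of $s(\alpha) = (|\alpha|, |V(\alpha)|, C^\alpha)$ separately and multiplying.

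The first coordinate $|\alpha|$ lies in $\{0,1,\ldots,d\}$, so it contributes at most $d+1$ choices. For the second coordinate, every non-isolated vertex is incident to at least one edge or self-loop. Each edge or loop covers at most two vertices, so $|V(\alpha)| \le 2|\alpha| \le 2d$. This gives at most $2d+1$ choices. Together the first two coordinates contribute a polynomial factor $O(d^2) = \exp(O(\log d))$, which is absorbed into $\exp(O(\sqrt{d}))$.

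The main point is the third coordinate $C^\alpha$. A component that is a simple cycle on $r$ vertices uses exactly $r$ edges: one self-loop when $r=1$, a double edge when $r=2$, and $r$ edges when $r \ge 3$. Distinct components have disjoint edge sets, so
\[ \sum_{r \ge 1} r\, C^\alpha(r) \le |\alpha| \le d. \]
Hence $C^\alpha$ is determined by a partition of some integer $k \le d$: take $C^\alpha(r)$ parts of size $r$. The map $\alpha \mapsto C^\alpha$ therefore takes at most $\sum_{k=0}^d p(k)$ values, where $p(k)$ is the partition function.

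We then invoke the Hardy--Ramanujan bound $p(k) \le \exp(\pi\sqrt{2k/3})$, or the elementary bound $p(k) \le \exp(O(\sqrt{k}))$, which follows from $\log p(k) \le \min_{x\in(0,1)} \bigl( k\log(1/x) + \sum_{j\ge1} x^j/(j(1-x^j)) \bigr)$ with $x = e^{-1/\sqrt{k}}$. This gives $\sum_{k \le d} p(k) \le (d+1)\exp(O(\sqrt{d}))$.

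Multiplying the three counts yields
\[ |\Sigma_d| \le (d+1)(2d+1)(d+1)\exp(O(\sqrt{d})) = \exp(O(\sqrt{d})). \]

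The only nontrivial step is the subexponential partition bound. To keep the proof self-contained, I would include the short generating-function argument sketched above rather than cite the Hardy--Ramanujan asymptotic.
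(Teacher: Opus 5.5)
Your proposal is correct and follows the paper's proof essentially verbatim: you bound the first two coordinates by $(d+1)(2d+1)$ choices, map cycle-count functions with $\sum_r r\,C(r)\le d$ to integer partitions of numbers at most $d$, and control $\sum_{k\le d}p(k)$ by a subexponential partition bound. Your added justification that $\sum_r r\,C^\alpha(r)\le|\alpha|$ is sound. Your generating-function bound is also valid: with $x=e^{-1/\sqrt{k}}$ and $\frac{x^j}{1-x^j}\le\frac{1}{j\log(1/x)}$ it gives $\log p(k)\le(1+\pi^2/6)\sqrt{k}$, which makes the argument self-contained instead of citing Hardy--Ramanujan.
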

\begin{proof}
The first two coordinates of a signature contribute at most $(d+1)\cdot (2d+1)$ possibilities. It therefore suffices to bound the number of possible cycle-count functions $C:\NN_{\geq 1}\to \NN$ satisfying
\[
\sum_{r \ge 1} r\cdot C(r) \le d.
\]
Such a function is exactly an integer partition of a number that is at most $d$, so the number of possibilities is
\[
    \sum_{t=0}^d p(t) \le \exp(O(\sqrt{d})),
\]
where $p(t)$ denotes the partition function, and the inequality follows from the classical Hardy--Ramanujan asymptotic formula~\cite{hardy-ramanujan-partitions}.
\end{proof}

Let $p : \RR^N \to \RR$ be a degree-$\le d$ polynomial, and write its expansion in the Hermite basis as
\[
p = \sum_{|\alpha| \le d} \hat p_\alpha \, h_\alpha.
\]
For each $\sigma \in \Sigma_d$, define
\[
p_\sigma := \sum_{s(\alpha)=\sigma} \hat p_\alpha \, h_\alpha,
\]
and note that $p = \sum_{\sigma \in \Sigma_d} p_\sigma$. For each $\sigma\in \Sigma_d$, we will prove
\begin{equation}\label{eqn:norm-bound-sig}
	\|p_\sigma\|_{\PP_\lambda}\leq d^{O(\sqrt{d})}\cdot \|p_\sigma\|_{\QQ}.
\end{equation}
By the following sequence of inequalities, this implies the desired conclusion:
\begin{align*}
\|p\|_{\PP_\lambda}
&\le \sum_{\sigma \in \Sigma_d} \|p_\sigma\|_{\PP_\lambda}
\tag{Triangle inequality} \\
&\le d^{O(\sqrt{d})}\cdot\sum_{\sigma \in \Sigma_d}\|p_\sigma\|_{\QQ}
\tag*{using \eqref{eqn:norm-bound-sig}} \\
&\le d^{O(\sqrt{d})}\cdot\sqrt{|\Sigma_d|}\cdot\left(\sum_{\sigma \in \Sigma_d}\|p_\sigma\|_{\QQ}^2\right)^{1/2}
\tag{Cauchy--Schwarz} \\
&= d^{O(\sqrt{d})}\cdot\sqrt{|\Sigma_d|}\cdot\|p\|_{\QQ}
\tag{orthogonality of Hermite polynomials under $\QQ$} \\
&\le d^{O(\sqrt{d})}\cdot\|p\|_{\QQ}.
\tag{\Cref{lem:ratio-signature-count}}
\end{align*}
It remains to show $\eqref{eqn:norm-bound-sig}$. For $\sigma\in \Sigma_d$, define the Gram matrix
\[
M^\sigma_{\alpha,\beta} := \EE_{\PP_\lambda}[h_\alpha(Y)h_\beta(Y)],
\qquad \alpha,\beta \in s^{-1}(\sigma).
\]
For intuition, $\|p_\sigma\|^2_{\PP_\lambda} = \hat{p}_\sigma^\top M^\sigma \hat{p}_\sigma$ and $\|p_\sigma\|^2_\QQ = \|\hat{p}_\sigma\|^2$ where $\hat{p}_\sigma$ is the vector of Hermite coefficients of $p_\sigma$, so our goal is to bound the spectral norm of $M^\sigma$. We will do this using the following row-sum bound.

\begin{lemma}\label{lem:row-sum-bound}
Consider the setting of \Cref{lem:ratio-norms}. For every $\sigma \in \Sigma_d$ and every $\alpha \in s^{-1}(\sigma)$,
\[
\sum_{\beta \in s^{-1}(\sigma)} \left| \EE_{\PP_\lambda}[h_\alpha(Y)h_\beta(Y)] \right|
\le d^{O(\sqrt{d})}.
\]
\end{lemma}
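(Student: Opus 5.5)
My plan is to compute each Gram entry explicitly via the shifted Hermite expansion and then count the $\beta$'s that give a nonzero contribution. Write $Y = \lambda X + Z$. Applying \Cref{prop:hermite-shift} to both $h_\alpha$ and $h_\beta$ and using orthonormality in $Z$ gives
\[
\EE_{\PP_\lambda}[h_\alpha h_\beta] = \sum_{\gamma \le \alpha\wedge\beta} \frac{\gamma!}{\sqrt{\alpha!\beta!}}\binom{\alpha}{\gamma}\binom{\beta}{\gamma}\lambda^{|\alpha|+|\beta|-2|\gamma|}\,\EE\bigl[X^{\alpha-\gamma+\beta-\gamma}\bigr].
\]
Here $X_{ij} = \sqrt n\, x_ix_j/\|x\|^2$ for $i<j$, and $X_i = \sqrt{n/2}\,x_i^2/\|x\|^2$. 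So a monomial $X^{\theta}$ with multigraph $\theta$ equals $n^{|\theta|/2}\prod_i x_i^{\deg_\theta(i)}/\|x\|^{2|\theta|}$, up to $2^{-O(d)}$ factors. I will first replace $\|x\|^2$ by $n$. By \Cref{prop:subgaussian-square-lower-tail}, $\|x\|^2 \ge n/2$ except with probability $e^{-cn}$. On that bad event I will use the indicator in \eqref{eq:X} together with the crude bound $|X_e| \le \sqrt n$ (Cauchy--Schwarz). Since $d = o(\log n/\log\log n)$, the resulting error $n^{O(d)}e^{-cn}$ is negligible. This leaves $|\EE X^\theta| \lesssim 2^{O(d)}\, n^{-|\theta|/2}\,\EE\prod_i |x_i|^{\deg_\theta(i)}$. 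The expectation vanishes unless every vertex of $\theta$ has even degree, and otherwise it is at most $d^{O(d)}$ only in a crude sense. To keep the bound at $d^{O(\sqrt d)}$ I will use the refined estimate: vertices of degree exactly $2$ contribute a factor of $1$ (variance one), and only vertices of degree $\ge 4$ cost subgaussian factors $(K\sqrt{\deg})^{\deg}$.

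Next comes the combinatorial accounting for a fixed $\alpha$. A term indexed by $(\beta,\gamma)$ is determined by the shared part $\gamma \le \alpha$, which allows at most $2^{d}$-type choices times multiplicities, and by $\beta-\gamma$, whose new vertices outside $V(\alpha)$ can be chosen in $n^{\#\text{new}}$ ways. Each such term carries weight $n^{-|\theta|/2}$ with $\theta = (\alpha-\gamma)+(\beta-\gamma)$. Since $\theta$ must have all degrees even, each new vertex has degree $\ge 2$ in $\beta-\gamma$. Hence $\#\text{new} \le |\theta|/2$ (edges $\ge$ vertices in an even-degree graph), with equality only when $\theta$ is a disjoint union of cycles on fresh or touching vertices. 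So the $n$-powers are at most $n^{0}$, and strictly negative powers arise whenever $\theta$ is not a cycle union. This is exactly where the signature constraint enters. Because $s(\beta) = s(\alpha)$ fixes $|V(\beta)|$, $|\beta|$, and the cycle counts, the $\Theta(1)$-weight terms are those in which $\beta$ is obtained from $\alpha$ by keeping $\core$-like structure and relocating or replacing cycle components. The number of such rearrangements for a fixed cycle-count profile $C^\alpha$ should be bounded by the number of ways to match cycles of $\alpha$ with cycles of $\beta$ of the same length. The $\lambda^{2r}\le 1$ factors (here $\lambda \le 1$ is used) and the $\gamma!/\sqrt{\alpha!\beta!}$ binomial factors then keep each cycle's contribution summable. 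A geometric series in the cycle length would blow up at $\lambda > 1$, which matches the $d$-cycle counterexample.

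The main obstacle is to make the last step rigorous, namely that the sum over fresh cycle placements does not produce a factor like $\exp(\Theta(d))$. My first attempt will be a factorization: I will treat each cycle component independently, so that summing over where a length-$r$ cycle of $\beta$ is placed gives $\sum_{\text{placements}} n^{-r}\lambda^{2r} \cdot n^{r}/\mathrm{aut} = O(1)$. The degree-$\ge4$ vertices and self-loops/double edges will be absorbed into the subgaussian moment factors, which total $d^{O(d')}$, where $d'$ counts high-multiplicity structure. I would aim to show that such structure is paid for by extra $n^{-1/2}$ factors unless it lies inside $\core(\alpha)$, whose contribution is fixed by $\alpha$. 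The $d^{O(\sqrt d)}$ ultimately comes from the number of cycle-matching patterns, which is bounded via partitions as in \Cref{lem:ratio-signature-count}, together with the $\exp(\Theta(\sqrt k))$ growth from repeated diagonal and cycle terms analogous to the scalar computation in the remark.
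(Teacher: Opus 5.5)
Your framework (shifted Hermite expansion, counting $\beta$ by its new vertices, cycle swaps as the leading terms, using $\lambda\le1$) matches the paper's, but two steps fail as written. First, the claim that $\EE X^\theta$ vanishes unless every vertex of $\theta$ has even degree is false in this model, and your count $\#\mathrm{new}\le|\theta|/2$ rests on it. The prior is only mean-zero, variance-one and subgaussian, so $\EE[u^3]\neq0$ is allowed and degree-$3$ vertices genuinely contribute. The random normalization $\|x\|^2$ also couples all coordinates, so a degree-$1$ vertex does not kill the expectation either. (It does for symmetric $\pi$, by flipping the sign of one $x_i$, but not in general.) Your absolute-value bound cannot compensate, because bounding $\|x\|^{-2|\theta|}$ by $(2/n)^{|\theta|}$ inside absolute values throws away exactly the mean-zero cancellation. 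Concretely, let $\alpha$ be the single edge $\{1,2\}$. Every disjoint edge $\beta=\{i,j\}$ has the same signature, and $\EE_{\PP_\lambda}[h_\alpha h_\beta]=\lambda^2\,\EE[X_{12}X_{ij}]$. Your bound gives $\Theta(1/n)$ for each of the $\Theta(n^2)$ such $\beta$, a row sum of order $n$, whereas the true entries are $O(n^{-5})$. You need an estimate like \Cref{lem:normalized-monomial-bound}. Writing $\|x\|^{-2r}=\frac{1}{(r-1)!}\int_0^\infty s^{r-1}e^{-s\|x\|^2}\,ds$ decouples the coordinates, and $|\EE[ue^{-su^2}]|\le s\,\EE|u|^3$ then gives an extra factor $n^{-1}$ per degree-$1$ vertex: $|\EE X^\theta|\le d^{O(d)}n^{-|\theta|/2-v_1}$, with $v_1$ the number of degree-$1$ vertices of $\theta$. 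The parity argument must then be replaced by $|\theta|+2v_1\ge2k+1$ for every pair $(\beta,\gamma)$ other than a pure cycle swap with $\gamma=\alpha\wedge\beta$. This uses $|V(\theta)|\ge|V(\alpha)\triangle V(\beta)|=2k$, which comes from $|V(\alpha)|=|V(\beta)|$, and the fact that each vertex of degree $\ge3$ contributes at least $3/2$ to $|\theta|$. It gives the $n^{-1/2}$ saving that beats the $n^kd^{O(d)}$ count of such $\beta$.

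Second, the leading-term count is left open, and the per-cycle factorization you propose would give $\exp(\Theta(d))$ rather than $d^{O(\sqrt d)}$. If each replaced cycle is summed over its placements independently and contributes $O(1)$, then summing over which cycles of $\alpha$ are replaced gives a factor exponential in the number of cycles of $\alpha$. For $\alpha$ consisting of $d$ distinct self-loops this is about $(1+\lambda^2/2)^d$. That is fatal, because the ratio-of-norms bound must be $\exp(o(d))$ to compete with $\rho^{2d}$. The missing point is that the new cycles of $\beta$ are unordered. Replacing $c_r$ of the $C^\alpha(r)$ cycles of each length $r$ yields at most $n^k\prod_r\binom{C^\alpha(r)}{c_r}/c_r!$ choices of $\beta$, where $k=\sum_r rc_r$. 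Each has $|\EE_{\PP_\lambda}[h_\alpha h_\beta]|=O(n^{-k})$: the Hermite coefficient at $\gamma=\alpha\wedge\beta$ is at most $1$, $\EE[u^2]=1$, and all terms with $\gamma\lneq\alpha\wedge\beta$ carry an extra $n^{-1}$. Hence the contribution is at most $\prod_r\sum_c C^\alpha(r)^c/(c!)^2\le\exp\bigl(2\sum_r\sqrt{C^\alpha(r)}\bigr)\le\exp(O(\sqrt{d\log d}))$, by Cauchy--Schwarz with $\sum_r rC^\alpha(r)\le d$. This is where $d^{O(\sqrt d)}$ comes from in the row sum. The partition count of \Cref{lem:ratio-signature-count} enters only the outer decomposition $p=\sum_\sigma p_\sigma$, and repeated-edge terms are negligible rather than a source of growth.
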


Before proving this lemma, let us see how it implies our goal~\eqref{eqn:norm-bound-sig}. Expanding $p_\sigma$ in the Hermite basis gives
\begin{align*}
\|p_\sigma\|_{\PP_\lambda}^2
&= \sum_{\alpha,\beta \in s^{-1}(\sigma)}
\hat p_\alpha \cdot \hat p_\beta \cdot \EE_{\PP_\lambda}[h_\alpha(Y)h_\beta(Y)] \\
&\le \sum_{\alpha,\beta \in s^{-1}(\sigma)}
\frac{\hat p_\alpha^2+\hat p_\beta^2}{2}\cdot 
\left|\EE_{\PP_\lambda}[h_\alpha(Y)h_\beta(Y)]\right|
\tag{$2ab\le a^2+b^2$} \\
&= \sum_{\alpha \in s^{-1}(\sigma)} \hat p_\alpha^2\cdot
\sum_{\beta \in s^{-1}(\sigma)}
\left|\EE_{\PP_\lambda}[h_\alpha(Y)h_\beta(Y)]\right|
\tag{symmetry of the double sum} \\
&\le d^{O(\sqrt{d})}\cdot\sum_{\alpha \in s^{-1}(\sigma)} \hat p_\alpha^2
\tag{\Cref{lem:row-sum-bound}} \\
&= d^{O(\sqrt{d})}\cdot \|p_\sigma\|_{\QQ}^2.
\tag{orthonormality under $\QQ$}
\end{align*}
This proves \eqref{eqn:norm-bound-sig}. Our proof of \Cref{lem:row-sum-bound} requires an auxiliary lemma bounding moments of the signal term in $\PP_\lambda$.
\begin{lemma}\label{lem:normalized-monomial-bound}
Let $\pi$ be a fixed subgaussian distribution with mean 0 and variance 1, and write
\[
m_q := \EE_{u \sim \pi}[|u|^q].
\]
Let $\gamma \in \NN^E$ with $|\gamma| \le d = n^{o(1)}$, and define $\deg_\gamma(i)$ to be the degree of vertex $i$ in the multigraph $\gamma$, with self-loops counting twice toward the degree. Write $v_1 :=|\deg_\gamma^{-1}(1)|$, the number of degree-1 vertices. We have
\begin{align*}
	|\EE[X^\gamma]|
&\le
O\!\left(n^{-|\gamma|/2-v_1}\right) \cdot (3|\gamma|\,m_3)^{v_1} \prod_{\deg_\gamma(i)>1} m_{\deg_\gamma(i)} \\
&\le d^{O(d)}\cdot n^{-|\gamma|/2 - v_1},
\end{align*}
where $X$ is distributed as the signal component~\eqref{eq:X} of $\PP_\lambda$.
\end{lemma}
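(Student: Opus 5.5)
Write $S := \|x\|^2$ and $k := |\gamma|$. Since $X = \mathbf{1}\{x\neq 0\}\sqrt{n}\,\Phi(xx^\top)/S$, each coordinate of $X^\gamma$ is, up to a factor $2^{-1/2}$ for diagonal entries, a product $x_i x_j$ or $x_i^2$. Hence
\[
X^\gamma = \pm c_\gamma\, n^{k/2}\, S^{-k} \prod_{i} x_i^{\deg_\gamma(i)}\,\mathbf{1}\{x\ne 0\},
\]
where $|c_\gamma|\le 1$. The bound to prove is therefore
\[
\bigl|\EE[\textstyle\prod_i x_i^{\deg_\gamma(i)}\, S^{-k}\mathbf{1}\{x\neq 0\}]\bigr| \lesssim n^{-k-v_1}\,(3k m_3)^{v_1}\prod_{\deg>1} m_{\deg}.
\]
Set $W := V(\gamma)$ with $|W|\le 2k = n^{o(1)}$, and split $S = S_W + S'$, where $S' := \sum_{i\notin W} x_i^2$ is independent of $(x_i)_{i\in W}$. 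Heuristically $S\approx n$, which yields the factor $n^{-k}$. The extra $n^{-v_1}$ must come from the degree-1 vertices: a factor $x_i$ with $i$ of degree 1 has mean zero, so its contribution appears only through the dependence of $S^{-k}$ on $x_i^2$.

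To handle the degree-1 vertices, I would exploit the symmetry $x_i\mapsto -x_i$ only in part, since $\pi$ need not be symmetric. Instead, expand in each degree-1 coordinate. For $i\in D := \deg_\gamma^{-1}(1)$, write $S = S_{-i} + x_i^2$ and Taylor expand $g(s) = s^{-k}$ around $S_{-i}$:
\[
S^{-k} = S_{-i}^{-k} - k x_i^2\,\xi^{-k-1},
\]
with $\xi$ between the two values. The zeroth-order term vanishes after taking the expectation over $x_i$, because $\EE x_i = 0$ and $x_i$ is independent of $S_{-i}$. The first-order term is bounded by $k|x_i|^3 S_{-i}^{-k-1}$, which contributes a factor of order $k\,m_3/n$. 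Iterating over all of $D$ (equivalently, applying a $v_1$-fold finite difference of $g$ in the variables $x_i^2$, $i\in D$) gives
\[
\Bigl|\EE\bigl[\textstyle\prod_{i\in D}x_i\cdot(\cdots)\bigr]\Bigr| \le \EE\Bigl[\textstyle\prod_{i\in D}|x_i|^3\prod_{\deg>1}|x_i|^{\deg}\cdot \binom{k+v_1-1}{v_1}v_1!\, S_{-D}^{-k-v_1}\Bigr],
\]
where $S_{-D}$ excludes the coordinates in $D$. The combinatorial factor is at most $(k+v_1)^{v_1}\le (3k)^{v_1}$ because $v_1\le 2k$. After this step the coordinates in $W$ separate from $S_{-W}$.

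What remains is to control $\EE[S_{-W}^{-k-v_1}\mathbf{1}\{\cdot\}]$, or more carefully the full quantity including the indicator $x\ne 0$. This is the main obstacle, since $S$ can be small with tiny probability and $S^{-k}$ can be huge there. I would split on the event $\{S_{-W}\ge n/2\}$. On that event, independence of the $W$-coordinates from $S_{-W}$ gives exactly $O(2^{k+v_1}n^{-k-v_1})$ times the moment product $m_3^{v_1}\prod m_{\deg}$, and $2^{O(k)}$ is absorbed into the $O(\cdot)$ and $d^{O(d)}$. On the complement, \Cref{prop:subgaussian-square-lower-tail} gives probability $\exp(-cn)$. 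To bound the integrand there, I would not use $S_{-W}$ at all, and instead use the crude deterministic bound $|X^\gamma|\le n^{k/2}$. This holds since $|x_ix_j|/S\le 1$ and hence every entry of $X$ is at most $\sqrt n$ in absolute value. That bound should be applied to the original expression before the Taylor step, so the argument must be organized as a decomposition of the whole expectation by the event $\{S_{-W}\ge n/2\}$. One then checks that the resulting contribution $n^{k/2}e^{-cn}$ is negligible against $n^{-k/2-v_1}$ when $k = n^{o(1)}$.

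The delicate point is that this event is not independent of the Taylor remainder. To manage it, I would condition on $S_{-W}$ (which is independent of $x_W$) and perform the Taylor argument only in the $x_D$ variables with $S_{-W}\ge n/2$ fixed, so that every intermediate value $\xi\ge S_{-W}\ge n/2$. The second inequality in the statement then follows from $m_q\le (K\sqrt q)^q$, the bounds $q\le 2d$ and $\sum\deg = 2k$, and $(3km_3)^{v_1}\le d^{O(d)}$.
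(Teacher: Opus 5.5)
Your architecture matches the paper's. You split according to whether $\sum_{j\notin W}x_j^2$ is large, and off that event you use $|X^\gamma|\le n^{|\gamma|/2}$ together with \Cref{prop:subgaussian-square-lower-tail}. On that event you use $\EE x_i=0$ to convert each mean-zero factor $x_i$ ($i$ of degree one) into $|x_i|^3$, at the cost of one extra power of $1/S$. The paper does the degree-one step with the identity $u^{-r}=\frac{1}{(r-1)!}\int_0^\infty s^{r-1}e^{-su}\,ds$ and the bound $|\EE[ue^{-su^2}]|\le m_3 s$. Your mixed finite difference in the variables $x_i^2$, $i\in D$, is a valid substitute and gives the same factor $(k+v_1-1)!/(k-1)!\le(3k)^{v_1}$. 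It is the finite difference (or an integral-form remainder) that makes this rigorous: Lagrange remainders do not iterate, because $\xi$ depends on the other $x_j$ in an uncontrolled way.

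The gap is the threshold $n/2$. On $\{S_{-W}\ge n/2\}$ you get $n^{k/2}(n/2)^{-k-v_1}=2^{k+v_1}n^{-k/2-v_1}$. Since $k=|\gamma|$ may grow with $n$, the factor $2^{k+v_1}$ cannot be ``absorbed into the $O(\cdot)$'', which the paper specifies hides an absolute constant. So you have proved only the second, weaker inequality.

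This loss is not harmless. The first inequality is used in \Cref{lem:ratio-tight-class} to get $|\EE[X^{\alpha\triangle\beta}]|\le O(1)\,n^{-k}$, where $|\alpha\triangle\beta|=2k$ and $k$ can be of order $d$. A $4^k$ loss there would only give $\tau=e^{O(d)}$ in \Cref{lem:ratio-norms}. That violates the requirement $\tau=\exp(o(d))$ needed for the one-sided test to beat $\rho^{2d}$.

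The repair is a threshold of the form $(1-o(1/k))n$, as the paper uses with $G=\{S_{-W}\ge n-|W|-n^{3/4}\}$. \Cref{prop:subgaussian-square-lower-tail} with $t=n^{3/4}$ still gives $\Pr(G^c)\le\exp(-cn^{1/2})$. This is negligible against $n^{k+v_1}=\exp(n^{o(1)})$. On $G$ one has $S_{-W}^{-k-v_1}\le(1+o(1))\,n^{-k-v_1}$, because $(k+v_1)(|W|+n^{3/4})=o(n)$.
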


In the first inequality, $O(\cdot)$ hides an absolute constant provided $n$ is sufficiently large, where the threshold for ``sufficiently large'' depends on $\pi$ and the growth rate of $|\gamma|$. The second inequality follows from the first using the bound on subgaussian moments (Definition~\ref{def:subg}).

The rest of the section is organized as follows. In \Cref{subsec:row-sum}, we prove \Cref{lem:row-sum-bound} using \Cref{lem:normalized-monomial-bound}. In \Cref{subsec:monomial-bound}, we prove \Cref{lem:normalized-monomial-bound}. Together these complete the proof of \Cref{lem:ratio-norms}.

\subsection{Proof of the row-sum bound: \Cref{lem:row-sum-bound}}\label{subsec:row-sum}

Fix $\sigma \in \Sigma_d$ and $\alpha \in s^{-1}(\sigma)$. We will bound the row sum
\[\sum_{\beta \in s^{-1}(\sigma)} \left| \EE_{\PP_\lambda}[h_\alpha(Y)h_\beta(Y)] \right|\]
by classifying $\beta$ into one of two cases. Consider the symmetric difference
\[
\alpha \triangle \beta := \alpha + \beta - 2(\alpha \wedge \beta),
\]
where $\alpha \wedge \beta$ denotes entrywise minimum. For an integer $0\leq k\leq |V(\alpha)|$, define the sets
\begin{align*}
\mathcal{T}_k
&:= \left\{
\beta \in s^{-1}(\sigma) :
|V(\beta)\setminus V(\alpha)| = k,\,
\core(\alpha \triangle \beta) = \emptyset,\,
|\alpha \triangle \beta| = 2k
\right\}, \\
\mathcal{R}_k
&:= \left\{
\beta \in s^{-1}(\sigma) :
|V(\beta)\setminus V(\alpha)| = k
\right\}\setminus \mathcal{T}_k.
\end{align*}
Intuitively, $\mathcal{T}_k$ consists of all $\beta$ that one can obtain by replacing any number of disjoint simple cycles from $\alpha$ with another collection of disjoint simple cycles (that is vertex-disjoint from $\alpha$). In aggregate across all $k$, $\mathcal{T}_k$ and $\mathcal{R}_k$ form a partition of the set of all $\beta\in s^{-1}(\sigma)$, so we can write
\[\sum_{\beta \in s^{-1}(\sigma)} \left| \EE_{\PP_\lambda}[h_\alpha(Y)h_\beta(Y)] \right| = \sum_{k=0}^{|V(\alpha)|}\left(\sum_{\beta\in \mathcal{T}_k} \left| \EE_{\PP_\lambda}[h_\alpha(Y)h_\beta(Y)] \right| + \sum_{\beta\in\mathcal{R}_k} \left| \EE_{\PP_\lambda}[h_\alpha(Y)h_\beta(Y)] \right|\right).\]

Before bounding the sum in each case, we will establish some identities that will be common to the argument in both cases. For any $\beta \in s^{-1}(\sigma)$, applying \Cref{prop:hermite-shift} and then using orthogonality of the Hermite basis under $Z$ gives the formula
\begin{align}
\EE_{\PP_\lambda}[h_\alpha(Y)h_\beta(Y)]
&= \EE_{X,Z}[h_\alpha(\lambda X+Z)h_\beta(\lambda X+Z)] \notag \\
&= \sum_{\kappa \le \alpha \wedge \beta}
c_{\alpha,\beta}(\kappa)\cdot \EE[(\lambda X)^{\alpha+\beta-2\kappa}],
\label{eq:ratio-hermite-gram}
\end{align}
where we set
\[
c_{\alpha,\beta}(\kappa)
:=
\frac{\kappa!}{\sqrt{\alpha!\beta!}}
\binom{\alpha}{\kappa}\binom{\beta}{\kappa}.
\]
For integers $0 \le k \le a \wedge b$, we have
\[
\frac{k!}{\sqrt{a!b!}}\binom{a}{k}\binom{b}{k}
= \frac{\sqrt{a!b!}}{k!(a-k)!(b-k)!}
\le \sqrt{\binom{a}{k}\binom{b}{k}}
\le 2^{(a+b)/2}.
\]
Taking a product over coordinates, and using $|\alpha|=|\beta|\le d$, we obtain
\begin{equation}\label{eq:ratio-coeff-bound}
c_{\alpha,\beta}(\kappa) \le 2^d.
\end{equation}
Similarly,
\begin{equation}\label{eq:ratio-kappa-count}
|\{\kappa\in \NN^E : \kappa \le \alpha \wedge \beta\}|
= \prod_{e \in E}(\alpha_e \wedge \beta_e + 1)
\le \prod_{e \in E} 2^{\alpha_e \wedge \beta_e}
\le 2^d.
\end{equation}

\paragraph{Case 1: bounding the sum over $\mathcal{R}_k$.}

\begin{lemma}\label{lem:ratio-remainder-class}
Consider the setting of \Cref{lem:ratio-norms}. Uniformly over all $0 \le k \le |V(\alpha)|$, we have
\[
\sum_{\beta \in \mathcal{R}_k}
\left|\EE_{\PP_\lambda}[h_\alpha(Y)h_\beta(Y)]\right|
= o(1).
\]
\end{lemma}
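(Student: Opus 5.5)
We plan to bound each term via \eqref{eq:ratio-hermite-gram}, \eqref{eq:ratio-coeff-bound}, \eqref{eq:ratio-kappa-count} and \Cref{lem:normalized-monomial-bound}, and then count the $\beta\in\mathcal{R}_k$ by the number of new vertices they use. For each $\kappa\le\alpha\wedge\beta$ put $\gamma:=\alpha+\beta-2\kappa$. Then $|\gamma|\ge|\alpha\triangle\beta|$, and
\[
\left|\EE_{\PP_\lambda}[h_\alpha h_\beta]\right|\le 4^d\max_{\kappa}\lambda^{|\gamma|}|\EE[X^\gamma]|\le d^{O(d)}\max_\kappa n^{-|\gamma|/2-v_1(\gamma)},
\]
where we used $\lambda\le1$. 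The number of $\beta$ with $|V(\beta)\setminus V(\alpha)|=k$ and a prescribed ``shape'' (the multigraph up to relabeling of the new vertices) is at most $n^k$ times $d^{O(d)}$ choices of shape. So it suffices to prove, for every $\beta\in\mathcal{R}_k$ and every $\kappa$, the exponent inequality
\[
|\gamma|/2+v_1(\gamma)\ \ge\ k+\tfrac12 .
\]
Once this holds, the sum is $d^{O(d)}n^{-1/2}=o(1)$ uniformly in $k$, because $d=o(\log n/\log\log n)$.

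The combinatorial core is as follows. Every new vertex $v\in V(\beta)\setminus V(\alpha)$ has all its incident edges in $\beta$ only. Such edges cannot be cancelled by $\kappa$, since $\kappa\le\alpha\wedge\beta$ vanishes there. Hence $\deg_\gamma(v)=\deg_\beta(v)\ge1$. Symmetrically, the vertices of $V(\alpha)\setminus V(\beta)$ are also untouched by $\kappa$, and since $|V(\alpha)|=|V(\beta)|$ there are exactly $k$ of them. So $\gamma$ contains at least $2k$ vertices, each with at least one incident edge of $\gamma$.

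We then use a charging argument. Give each vertex $w$ of $\gamma$ weight $\deg_\gamma(w)/4$ if $\deg_\gamma(w)\ge2$, and weight $1$ if $\deg_\gamma(w)=1$. Since $|\gamma|=\tfrac12\sum_w\deg_\gamma(w)$, the quantity $|\gamma|/2+v_1$ is at least the total weight, with each degree-1 vertex contributing $1/4+3/4$. Every vertex among the $2k$ untouched ones has weight at least $1/2$, so the bound $\ge k$ is immediate. Equality forces every vertex of $\gamma$ to have degree exactly $2$, forces $\gamma$ to have no other vertices, and forces no old vertex to be cancelled partially. In that case $\gamma=\alpha\triangle\beta$ (no extra multiplicity), $\gamma$ is a disjoint union of cycles so $\core(\alpha\triangle\beta)=\emptyset$, and $|\gamma|=2k$. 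That is exactly $\beta\in\mathcal{T}_k$.

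Therefore, for $\beta\in\mathcal{R}_k$ the inequality must be strict. Since the exponent is a half-integer, strictness gives a gain of at least $1/2$. The main obstacle is making this rigidity step airtight. One must also check the cases where $\gamma$ carries extra mass on shared vertices. Those cases contribute additional degree beyond the $2k$ untouched vertices, and this should only increase the weight. The equality analysis needs care with self-loops, which count twice toward degree, and with double edges, which count as 2-cycles. The signature constraint $s(\beta)=s(\alpha)$ is not needed in this step beyond $|V(\beta)|=|V(\alpha)|$ and $|\alpha|=|\beta|$.
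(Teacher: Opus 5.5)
Your proposal is correct and follows the paper's proof essentially step for step. It uses the same per-term bound $d^{O(d)}n^{-|\gamma|/2-v_1}$, the same count $|\mathcal{R}_k|\le n^k d^{O(d)}$, and the same key inequality $|\gamma|+2v_1\ge 2k+1$, obtained by degree-counting over the $2k$ vertices of $V(\alpha)\triangle V(\beta)$ plus half-integrality. The only cosmetic difference is how you get strictness: you use an equality/rigidity analysis, while the paper splits into three cases according to $|V(\alpha\triangle\beta)|$, $|\alpha\triangle\beta|$, and whether $\core(\alpha\triangle\beta)$ is empty. Your rigidity step is airtight, because $2(\alpha\wedge\beta-\kappa)$ is supported on edges inside $V(\alpha)\cap V(\beta)$; equality therefore forces $\kappa=\alpha\wedge\beta$ and $\gamma=\alpha\triangle\beta$ to be $2$-regular on exactly $2k$ vertices, i.e.\ $\beta\in\mathcal{T}_k$.
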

\begin{proof}
	We begin with a crude bound on $|\mathcal{R}_k|$. One can specify a member $\beta\in \mathcal{R}_k$ with the following two pieces of information:
	\begin{enumerate}
		\item The $k$ new vertices in $V(\beta)\setminus V(\alpha)$. There are at most $n^k$ ways to pick them.
		\item The edges of $\beta$. Given $V(\beta)\setminus V(\alpha)$, every edge of $\beta$ has both endpoints in \(V(\beta)\subseteq V(\alpha)\cup (V(\beta)\setminus V(\alpha))\), a known set of \(O(d)\) vertices. Since $|\beta|\le d$, there are at most $d^{O(d)}$ possible choices for these edges, including multiplicities.
	\end{enumerate}
	Therefore we have
\begin{equation}\label{eq:ratio-remainder-count}
|\mathcal{R}_k| \le n^k d^{O(d)}.
\end{equation}
Now fix $\beta \in \mathcal{R}_k$ and $\kappa \le \alpha \wedge \beta$ (as in \eqref{eq:ratio-hermite-gram}). Set
\[
\gamma := \alpha + \beta - 2\kappa.
\]
Let
\[
v_1 := |\deg_\gamma^{-1}(1)|,\qquad
v_2 := |\deg_\gamma^{-1}(2)|,\qquad
v_{\ge 3} := |\{i : \deg_\gamma(i)\ge 3\}|.
\]
Summing degrees yields
\begin{align*}
|\gamma| + 2v_1
&= \sum_i \deg_\gamma(i)/2 + 2v_1
\tag{$\sum_i \deg_\gamma(i)=2|\gamma|$} \\
&\ge \frac{3}{2}v_1 + v_2 + \frac{3}{2}v_{\ge 3}\\
&= |V(\gamma)| + \frac{v_1+v_{\ge 3}}{2}\\
&\geq |V(\alpha\triangle\beta)|+ \frac{v_1+v_{\ge 3}}{2}.\tag{$\alpha\triangle\beta\leq \gamma$ because $\kappa\leq \alpha\wedge\beta$}\\
\end{align*}
	We claim that \(|\gamma|+2v_1\) is at least $2k+1$. We will prove this by considering three possible cases.
\begin{enumerate}
	\item If $|V(\alpha\triangle\beta)|\geq 2k+1$, then we are immediately done by the above.
	\item If $|\alpha\triangle\beta|\geq 2k+1$, then we are also done because $|\gamma|\geq |\alpha\triangle\beta|\geq 2k+1$.
	\item Otherwise, we have $|V(\alpha\triangle\beta)|\leq 2k$ and $|\alpha\triangle\beta|\leq 2k$. In fact, since $V(\alpha\triangle\beta)\supseteq V(\alpha)\triangle V(\beta)$, we must have
		\[|V(\alpha\triangle\beta)|\geq |V(\alpha)\triangle V(\beta)| = |V(\alpha)\setminus V(\beta)| + |V(\beta)\setminus V(\alpha)| = 2k,\]
		where we used that $|V(\alpha)\setminus V(\beta)| = |V(\beta)\setminus V(\alpha)|$ because $|V(\alpha)| = |V(\beta)|$. So it must be the case that $|V(\alpha\triangle\beta)| = 2k$ and $|\alpha\triangle\beta|\leq 2k$.
		
			Given this, we can conclude that \(\alpha\triangle\beta\) is not a vertex-disjoint union of simple cycles; in particular if $|\alpha\triangle\beta|< 2k$, the average degree is strictly smaller than $2$, and if $|\alpha\triangle\beta| = 2k$, the definition of $\mathcal{R}_k$ implies that $\core(\alpha\triangle\beta)$ is nonempty, and the claim follows.
		
			In particular, \(\alpha\triangle\beta\) has at least one vertex with degree not in \(\{0,2\}\). Since \(\gamma-\alpha\triangle\beta=2(\alpha\wedge\beta-\kappa)\) changes every vertex degree by an even nonnegative amount, this vertex still has degree not in \(\{0,2\}\) in \(\gamma\). Thus \(v_1+v_{\ge3}\ge1\), and the above inequalities imply
			\[
			|\gamma|+2v_1\ge |V(\alpha\triangle\beta)|+\frac{v_1+v_{\ge3}}{2}>2k.
			\]
			Since the left-hand side is an integer, it is at least \(2k+1\).
\end{enumerate}
With this bound, we can apply \Cref{lem:normalized-monomial-bound} to obtain
\[
\left|\EE[X^\gamma]
\right|
\le d^{O(d)} n^{-k-1/2}.
\]
Now we use \eqref{eq:ratio-hermite-gram}, \eqref{eq:ratio-coeff-bound}, \eqref{eq:ratio-kappa-count} and the assumption $\lambda \le 1$ to arrive at
\[
\left|\EE_{\PP_\lambda}[h_\alpha(Y)h_\beta(Y)]\right|
\le d^{O(d)} n^{-k-1/2}.
\]
Combining this with \eqref{eq:ratio-remainder-count},
\[
\sum_{\beta \in \mathcal{R}_k}
\left|\EE_{\PP_\lambda}[h_\alpha(Y)h_\beta(Y)]\right|
\le \left(n^k d^{O(d)}\right)\cdot d^{O(d)} n^{-k-1/2}
= d^{O(d)} n^{-1/2} = o(1),
\]
where we used $d = o(\log n / \log\log n)$.
\end{proof}

\paragraph{Case 2: bounding the sum over $\mathcal{T}_k$.}

\begin{lemma}\label{lem:ratio-tight-class}
Consider the setting of \Cref{lem:ratio-norms}. Uniformly over all $0 \le k \le |V(\alpha)|$, we have
\[
\sum_{\beta \in \mathcal{T}_k}
\left|\EE_{\PP_\lambda}[h_\alpha(Y)h_\beta(Y)]\right|
\le d^{O(\sqrt{d})}.
\]
\end{lemma}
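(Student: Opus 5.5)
The plan is to exploit the rigid structure of $\mathcal{T}_k$: every $\beta \in \mathcal{T}_k$ arises from $\alpha$ by deleting some collection of vertex-disjoint simple-cycle components of $\alpha$ (covering $k$ vertices) and inserting an equal-signature collection of simple cycles on $k$ fresh vertices outside $V(\alpha)$. We will parametrize $\beta$ by the subset $D$ of cycle components of $\alpha$ that is removed, together with the new cycle components $D'$. Since the signatures agree, the cycle-length multiset of $D'$ equals that of $D$. Thus $\alpha\wedge\beta = \alpha - D$ is common, and $\alpha \triangle \beta = D \sqcup D'$ is a vertex-disjoint union of $2k$-vertex cycles with $|\alpha\triangle\beta| = 2k$.

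Fixing $D$ (at most $2^{\#\text{cycles}} \le 2^d$ choices, or better, indexed by a sub-partition, giving $\exp(O(\sqrt d))$ up to the choice of which components), we will bound
\[
\sum_{D'} \left|\EE_{\PP_\lambda}[h_\alpha h_\beta]\right|
\]
using \eqref{eq:ratio-hermite-gram}. For each $\kappa \le \alpha\wedge\beta$ we get $\gamma = (\alpha\triangle\beta) + 2(\alpha\wedge\beta - \kappa)$. The key estimate is the $\kappa = \alpha\wedge\beta$ term, where $\gamma = D \sqcup D'$ is a union of cycles with $v_1 = 0$ and $|\gamma| = 2k$. By \Cref{lem:normalized-monomial-bound} this gives $|\EE[X^\gamma]| \le O(n^{-k})\prod_i m_{\deg(i)} = O(n^{-k})$, since all degrees are $2$ and $m_2 = 1$. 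The factor $\lambda^{2k} \le 1$ and the coefficient $c_{\alpha,\beta}(\kappa)$ cannot exceed $1$ for pure-cycle structure, up to the $2^{(a+b)/2}$ losses on double edges or loops.

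For the other $\kappa$, $\gamma$ has extra even-degree additions on $V(\alpha)\setminus$(vertices of $D$). These do not create new vertices, so the $n$-power is at least $n^{-|\gamma|/2} \le n^{-k}$, while the moment products only contribute $d^{O(d)}$. This is too lossy, and it is the main obstacle. To handle it, we will separate the components: $X^\gamma$ factorizes approximately as $X^{D\sqcup D'}$ times $X^{2(\alpha\wedge\beta-\kappa)}$. The terms of \eqref{eq:ratio-hermite-gram} then regroup into $\EE[\cdot]$ of Hermite products on the common part, contributing a bounded factor of the form $\EE[h_{\alpha\wedge\beta}(\lambda X+Z)^2]$ restricted to cycles. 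This factor is controlled by the per-cycle-length analysis. As in the scalar remark, a Hermite power of a single ``cycle statistic'' has norm $\exp(O(\sqrt{\cdot}))$. With $c$ cycles of length $r$ grouped by multiplicity, we get a product over lengths of $\exp(O(\sqrt{C(r)}))$ factors, bounded via the partition structure by $d^{O(\sqrt d)}$.

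Finally, we count the choices of $D'$. There are at most $n^k$ placements of the fresh vertices with cycle structure fixed, up to a combinatorial factor $\prod_r C(r)!^{-1}$-type symmetry. Multiplying by $O(n^{-k})\,d^{O(\sqrt d)}$ gives $d^{O(\sqrt d)}$. Summing over the $\exp(O(\sqrt d))$ sub-signature choices for $D$ completes the bound. The delicate step throughout is ensuring that no $d^{O(d)}$ factor appears: only the cycle-multiplicity combinatorics may contribute, and those are governed by partitions, hence $\exp(O(\sqrt d\log d))$.
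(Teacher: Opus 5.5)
Your skeleton, a per-term bound of $O(n^{-k})$ against $|\mathcal{T}_k| \le n^k d^{O(\sqrt{d})}$, is the paper's. Your description of $\mathcal{T}_k$ as cycle swaps and your treatment of the $\kappa=\alpha\wedge\beta$ term are correct.

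\textbf{First gap: the $\kappa \lneq \alpha\wedge\beta$ terms.} You call the direct bound ``too lossy,'' but that comes from a miscount. Set $\eta := \alpha\wedge\beta-\kappa \neq 0$, so $\gamma = \alpha\triangle\beta + 2\eta$. Then $|\gamma| = 2k+2|\eta| \ge 2k+2$, and $\gamma$ still has no degree-$1$ vertices. \Cref{lem:normalized-monomial-bound} therefore gives $|\EE[X^\gamma]| \le d^{O(d)} n^{-|\gamma|/2} \le d^{O(d)} n^{-k-1}$, not merely $n^{-k}$. The exponent counts edges, so each extra edge costs $n^{-1/2}$ even when it adds no vertex. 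There are at most $2^d$ such $\kappa$, each with $c_{\alpha,\beta}(\kappa)\le 2^d$. So these terms total $d^{O(d)}n^{-k-1} = o(n^{-k})$, because $d = o(\log n/\log\log n)$ forces $d^{O(d)} = n^{o(1)}$. That is the paper's entire argument for these terms.

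Your replacement argument is not valid as stated, for three reasons.
\begin{itemize}
\item All entries of $X$ are coupled through $x$ and $\|x\|_2$, so $X^{\alpha\triangle\beta+2\eta}$ does not factor over disjoint edge sets.
\item The common part $\alpha\wedge\beta$ contains $\core(\alpha)$, so it is not ``restricted to cycles.''
\item The scalar analogy is misplaced. By the same moment bound, every single $h_\gamma$ with $|\gamma|\le d$ has $\|h_\gamma\|_{\PP_\lambda}^2 = 1+o(1)$. The $\exp(\Theta(\sqrt d))$ effect enters only through the size of $\mathcal{T}_k$, not as a per-term factor.
\end{itemize}
Also, $c_{\alpha,\beta}(\alpha\wedge\beta)\le 1$ holds exactly, since each edge of $\alpha\triangle\beta$ contributes $1$ or $1/\sqrt{2}$. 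Allowing $2^{\Theta(d)}$ losses there, as your hedge does, would by itself break the bound.

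\textbf{Second gap: the count.} For a fixed sub-signature $C'$, the number of ways to choose which cycle components of $\alpha$ to delete is $\prod_r \binom{C^\alpha(r)}{C'(r)}$. This can be $2^{\Theta(d)}$: take $\alpha$ to be $d$ self-loops and $k=d/2$. So there are not $\exp(O(\sqrt{d}))$ choices of $D$, and multiplying a per-$D$ bound of $d^{O(\sqrt d)}$ by the number of $D$'s does not give $d^{O(\sqrt d)}$.

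The fix is to pair the deletion count with the $1/\prod_r C'(r)!$ gained from placing the unordered new cycles on $k$ fresh vertices. There are at most $(n^k/k!)\cdot k!/\prod_r C'(r)!$ such placements, which yields $|\mathcal{T}_k| \le n^k\sum_{C'}\prod_r C^\alpha(r)^{C'(r)}/C'(r)!^2$. Then $\sum_{c\ge0}x^c/c!^2\le e^{2\sqrt{x}}$ together with Cauchy--Schwarz, $\sum_r\sqrt{C^\alpha(r)}\le\sqrt{d\sum_{r\le d}r^{-1}}$, gives $|\mathcal{T}_k| \le n^k\exp(O(\sqrt{d\log d}))$. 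You mention a $\prod_r C(r)!^{-1}$ symmetry factor but never combine it with the choice of $D$. That combination is exactly where the $\sqrt{d}$ in the exponent comes from.
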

\begin{proof}
As in the previous case, we will first prove a bound on $|\mathcal{T}_k|$, and then prove a bound on each term in the sum. By definition of $\mathcal{T}_k$, for any $\beta\in \mathcal{T}_k$, the graphs 
\[
A=\alpha-\alpha\wedge\beta
\qquad\text{and}\qquad
B=\beta-\alpha\wedge\beta
\]
must each be a vertex-disjoint union of cycles such that \(V(A)\), \(V(B)\), and \(V(\alpha\wedge\beta)\) are pairwise disjoint. Indeed, \(A+B=\alpha\triangle\beta\) is a vertex-disjoint union of simple cycles with \(2k\) edges, hence has \(2k\) vertices; since it contains \(V(\alpha)\triangle V(\beta)\), which also has size \(2k\), it cannot contain any vertex of \(V(\alpha\wedge\beta)\). Furthermore the fact that $C^\alpha = C^\beta$ implies that $C^{A} = C^{B}$. Also, note that $\beta$ is completely determined by $A$ and $B$. With these basic facts, let us describe one way to specify $\beta\in \mathcal{T}_k$ in four steps.
\begin{enumerate}
	\item Pick a function \(C':\NN_{\geq 1}\to \NN\) such that $C' \le C^\alpha$ with $\sum_{r \ge 1} r C'(r)=k$.
	\item Pick the edges in $A$. To do this, for each $r$, we must choose \(C'(r)\) of the $C^\alpha(r)$ simple $r$-cycles from $\alpha$. The number of ways to do this is
	\[
	\prod_{r \in [d]} \binom{C^\alpha(r)}{C'(r)}
	\le \prod_{r \in [d]}
	\frac{C^\alpha(r)^{C'(r)}}{C'(r)!}.
	\]
	\item Pick the $k$ vertices in $V(\beta)\setminus V(\alpha)$. The number of ways to do this is at most $\binom{n}{k}\leq n^k/k!$.
	\item Pick the edges in $B$. This can be done by (1) partitioning the $k$ chosen vertices into a disjoint collection of subsets so that exactly $C'(r)$ subsets have size $r$ for each $r\in [d]$, and then (2) specifying a cycle on each of these subsets. The first sub-step can be done in
		\[\frac{k!}{\prod_{r\in [d]}{(r!)^{C'(r)}\cdot C'(r)!}}\]
		ways, and the second sub-step can be done in
		\[\prod_{r\geq 3}\left(\frac{(r-1)!}{2}\right)^{C'(r)}\]
		ways. Multiplying these together and bounding crudely, this number of ways to perform this step is at most
		\[\frac{k!}{\prod_{r\in [d]}C'(r)!}.\]
\end{enumerate}
Therefore
\begin{align*}
|\mathcal{T}_k|
&\le \sum_{C' \le C^\alpha}
\frac{n^k}{k!} \cdot k!\cdot \prod_{r \in [d]}
\frac{C^\alpha(r)^{C'(r)}}{C'(r)!^2} \\
&= n^k \cdot\prod_{r \in [d]}
\sum_{c \le C^\alpha(r)} \frac{C^\alpha(r)^c}{c!^2}
\tag{swapped $\sum$ and $\prod$} \\
&\le n^k \cdot\prod_{r \in [d]} \exp(2\sqrt{C^\alpha(r)})
\tag{$\sum_{c\ge 0} x^c/c!^2 \le e^{2\sqrt{x}}$} \\
&= n^k\cdot \exp\!\left(2\sum_{r \in [d]} \sqrt{C^\alpha(r)}\right)\\
&\le n^k \cdot\exp\!\left(
2\sqrt{\sum_{r \in [d]} r \cdot C^\alpha(r)}
\sqrt{\sum_{r \in [d]} r^{-1}}
\right)
\tag{Cauchy--Schwarz} \\
&\le n^k \cdot\exp(O(\sqrt{d\log d}))
\tag{$\sum_r r\cdot C^\alpha(r)\le d$ and Harmonic number asymptotics} \\
&\le n^k \cdot d^{O(\sqrt{d})}.
\end{align*}
Having proved a bound on $|\mathcal{T}_k|$, we now bound each term in the row sum. By \eqref{eq:ratio-hermite-gram} we can write
\begin{align*}
\EE_{\PP_\lambda}[h_\alpha(Y)h_\beta(Y)]
&= \sum_{\kappa \le \alpha \wedge \beta}
c_{\alpha,\beta}(\kappa)\cdot \EE[(\lambda X)^{\alpha+\beta-2\kappa}] \\
&= c_{\alpha,\beta}(\alpha\wedge\beta)\cdot \EE[(\lambda X)^{\alpha\triangle\beta}] + \sum_{\kappa \lneq \alpha \wedge \beta}
c_{\alpha,\beta}(\kappa)\cdot \EE[(\lambda X)^{\alpha+\beta-2\kappa}],
\end{align*}
where $\lneq$ means that $\le$ holds entrywise and the inequality is strict on at least one coordinate.

Using the first inequality in \Cref{lem:normalized-monomial-bound}, we can bound $|\EE[X^{\alpha\triangle\beta}]|$ by \(O(1)\cdot n^{-k}\) because all vertices in $\alpha\triangle\beta$ have degree in $\{0,2\}$ (and note $m_2 = 1$). For \(\kappa \lneq \alpha\wedge\beta\), write \(\eta := \alpha\wedge\beta-\kappa\). Then \(\alpha+\beta-2\kappa = \alpha\triangle\beta+2\eta\), where \(\eta\neq 0\). Thus there are no degree-\(1\) vertices and \(|\alpha+\beta-2\kappa|/2 \ge k+1\), so \Cref{lem:normalized-monomial-bound} gives
\[
{|\EE[X^{\alpha+\beta-2\kappa}]|
\leq d^{O(d)}\cdot n^{-k-1}.}
\]

Let us also calculate
\begin{align*}
	c_{\alpha,\beta}(\alpha\wedge\beta) &= \prod_{e\in E}\frac{(\alpha\wedge\beta)_e!}{\sqrt{\alpha_e!\beta_e!}}\binom{\alpha_e}{(\alpha\wedge\beta)_e}\binom{\beta_e}{(\alpha\wedge\beta)_e}.
\end{align*}
For all $e$ satisfying $(\alpha\triangle\beta)_e= 0$, the contribution to the product above is exactly $1$. Otherwise, since $\beta\in \mathcal{T}_k$, $\{\alpha_e,\beta_e\}$ is forced to be either $\{0,1\}$ {if \(e\) appears once in a simple cycle} or $\{0,2\}$ {if \(e\) is the edge of a simple \(2\)-cycle}. Therefore the contribution is either $1$ or $1/\sqrt{2}$, and in particular at most $1$. So we have $c_{\alpha,\beta}(\alpha\wedge\beta)\leq 1$. Plugging in these bounds and recalling $\lambda \le 1$, we have
\begin{align*}
	\lvert\EE_{\PP_\lambda}[h_\alpha(Y)h_\beta(Y)]\rvert
		&\leq c_{\alpha,\beta}(\alpha\wedge\beta)\cdot {\lvert}\EE[X^{\alpha\triangle\beta}]{\rvert} + \sum_{\kappa \lneq \alpha \wedge \beta}
c_{\alpha,\beta}(\kappa)\cdot {\lvert}\EE[X^{\alpha+\beta-2\kappa}]{\rvert}\\
	&\leq O(1)\cdot n^{-k} + d^{O(d)}\cdot n^{-k-1}\cdot \sum_{\kappa \lneq \alpha \wedge \beta}
c_{\alpha,\beta}(\kappa)\\
	&\leq O(1)\cdot n^{-k} + d^{O(d)}\cdot n^{-k-1}\tag{by \eqref{eq:ratio-coeff-bound}, \eqref{eq:ratio-kappa-count}}\\
	&\leq O(1)\cdot n^{-k}.\tag{$d = o(\log n / \log\log n)$}
\end{align*}
		Recalling $|\mathcal{T}_k| \le n^k \cdot d^{O(\sqrt{d})}$, the sum over all $\beta\in \mathcal{T}_k$ is bounded by \(O(1)\cdot n^{-k}\cdot |\mathcal{T}_k| \leq d^{O(\sqrt{d})}\).
\end{proof}

\paragraph{Completing the proof.} To complete the proof of \Cref{lem:row-sum-bound}, we sum the inequalities in \Cref{lem:ratio-remainder-class,lem:ratio-tight-class} over all $0 \le k \le {2}d$ to obtain
\begin{align*}
\sum_{\beta \in s^{-1}(\sigma)}
\left|\EE_{\PP_\lambda}[h_\alpha(Y)h_\beta(Y)]\right|
&\le \sum_{k=0}^{{2}d}
\left(
d^{O(\sqrt{d})} + o(1)
\right)  = d^{O(\sqrt{d})}.\\
\end{align*}

\subsection{Proof of the moment bound: \Cref{lem:normalized-monomial-bound}}\label{subsec:monomial-bound}

Write $r := |\gamma|$ and assume $r \ge 1$ because the case $r=0$ is immediate. Let
\[
S := V(\gamma) = \{i : \deg_\gamma(i)>0\}.
\]
For $i \in S$, write $d_i := \deg_\gamma(i)$. Since $\sum_{i \in S} d_i = 2r$, we have $|S| \le 2r = n^{o(1)}$.

Recalling the definition of $X$ from \eqref{eq:X}, we can write
\[
X^\gamma
= c_\gamma \cdot \mathbf{1}\{x \neq 0\}\cdot
	\left(\frac{\sqrt{n}}{\|x\|_2^2}\right)^r\cdot
\prod_{i \in S} x_i^{d_i},
\]
where $|c_\gamma| \le 1$ absorbs the diagonal $1/\sqrt{2}$ scaling. Define the random variable
\[
T := \sum_{j \notin S} x_j^2.
\]
\(T\) is a sum of \(n-|S|=n(1-o(1))\) independent squares \(x_j^2\), where each \(x_j\) is drawn from \(\pi\). Applying \Cref{prop:subgaussian-square-lower-tail} with $m=n-|S|$ and $t=n^{3/4}$, we obtain
\[
\Pr\!\left[T < (n-|S|)-n^{3/4}\right]
\le \exp(-c n^{1/2})
\]
for some constant $c=c(\pi)>0$ and all sufficiently large $n$. Define the ``good'' event
\[
G := \{T \ge (n-|S|)-n^{3/4}\}.
\]
We first bound the contribution from $G^c$. By weighted AM--GM,
\[
\prod_{i \in S} |x_i|^{d_i}
= \left(\prod_{i \in S}(x_i^2)^{d_i/(2r)}\right)^r
\le \left(\sum_{i \in S}\frac{d_i}{2r}x_i^2\right)^r
\le \|x\|_2^{2r}.
\]
Consequently,
\[
|X^\gamma|
\le |c_\gamma|
\left(\frac{\sqrt{n}}{\|x\|_2^2}\right)^r
\|x\|_2^{2r}
\le n^{r/2},
\]
and therefore
\[
|\EE[X^\gamma \mathbf{1}_{G^c}]|
\le n^{r/2}\exp(-c n^{1/2}).
\]
This satisfies the bound claimed in Lemma~\ref{lem:normalized-monomial-bound}, using the following observations. First, $v_1 \le 2r$, which means $v_1$ and $r = |\gamma|$ are both $n^{o(1)}$. Second, \(m_q\ge 1\) for every \(q\ge2\) due to Jensen combined with \(\EE_{u \sim \pi}[u^2]=1\).

We now turn to the contribution from $G$. For $t>0$, define
\[
B(t)
:= \Eop_{x_S}\left[
\left(t+\sum_{i \in S} x_i^2\right)^{-r}
\prod_{i \in S} x_i^{d_i}
\right].
\]
Since $T$ is independent of $x_S$,
\[
\EE[X^\gamma \mathbf{1}_G]
= c_\gamma n^{r/2} \cdot \Eop_T[\mathbf{1}_G \cdot B(T)].
\]
Using the identity
\[
u^{-r}
= \frac{1}{(r-1)!}\int_0^\infty s^{r-1}e^{-su}\,ds
\qquad (u>0),
\]
we obtain
\begin{align*}
B(t)
&= \frac{1}{(r-1)!}
\int_0^\infty s^{r-1}e^{-st}
\Eop_{x_S}\left[\prod_{i \in S} x_i^{d_i}e^{-s x_i^2}\right]\,ds \\
&= \frac{1}{(r-1)!}
\int_0^\infty s^{r-1}e^{-st}
\prod_{i \in S}\zeta(s,d_i)\,ds,
\end{align*}
where
\[
\zeta(s,q) := \EE_{u \sim \pi}[u^q e^{-s u^2}].
\]
For any $s \ge 0$, we claim that
\[
|\zeta(s,1)| \le m_3\cdot s,
\qquad
|\zeta(s,q)| \le m_q \quad\text{for every integer } q \ge 2.
\]
For the first claim, using $\EE_{u \sim \pi}[u]=0$,
\begin{align*}
|\zeta(s,1)|
&= \left|\EE_{u \sim \pi}[u(e^{-s u^2}-1)]\right|
\tag{$\EE[u]=0$} \\
&\le \EE_{u \sim \pi}[|u| \cdot |e^{-s u^2}-1|]
\tag{triangle inequality} \\
&\le s \EE_{u \sim \pi}[|u|^3]
\tag{$1-e^{-x}\le x$ for $x\ge 0$}
= m_3\cdot s.
\end{align*}
Now for the second claim,
\[ |\zeta(s,q)| \le \EE_{u \sim \pi} |u^q e^{-su^2}| \le \EE_{u \sim \pi} |u^q| = m_q. \]
Therefore
\begin{align*}
|B(t)|
&\le \frac{1}{(r-1)!}\cdot
m_3^{v_1}\cdot
\prod_{\deg_\gamma(i)>1} m_{\deg_\gamma(i)}\cdot 
\int_0^\infty s^{r-1+v_1}e^{-st}\,ds \\
&=
\frac{(r+v_1-1)!}{(r-1)!}\cdot
m_3^{v_1}\cdot
\prod_{\deg_\gamma(i)>1} m_{\deg_\gamma(i)}
\cdot t^{-r-v_1}.
\end{align*}
Since $v_1 \le 2r$,
\[
\frac{(r+v_1-1)!}{(r-1)!}
= \prod_{j=0}^{v_1-1}(r+j)
\le (3r)^{v_1},
\]
and hence
\[
|B(t)|
\le (3rm_3)^{v_1}\cdot 
\prod_{\deg_\gamma(i)>1} m_{\deg_\gamma(i)}
\cdot t^{-r-v_1}.
\]
On $G$ we have $T \ge n(1-O(n^{-1/4}))$, and recall $r+v_1=n^{o(1)}$, so $T^{-r-v_1} = O(n^{-r-v_1})$. Therefore
\[
|\EE[X^\gamma \mathbf{1}_G]|
\le O\!\left(n^{-r/2-v_1}\right)
\prod_{\deg_\gamma(i)>1} m_{\deg_\gamma(i)}
\cdot (3rm_3)^{v_1}.
\]
Combining this with the bound on $G^c$ proves the lemma.

\section{Large-Set Expansion for Spiked Wigner}
\label{app:lse}

This section is devoted to the proof of Lemma~\ref{lem:lse}.

\subsection{Proof overview}
\label{sec:lse-overview}

For a deterministic choice of $x\in\RR^n$, let $\PP_{\lambda,x}$ denote the spiked Wigner distribution (Definition~\ref{def:wigner-additive}) with the spike fixed to $x$ instead of chosen at random. That is, \(\PP_{\lambda,x}\) is distributed as \(Y = \mu_\lambda(x) + Z\) where \(Z\sim \cN(0, I_N)\) and
\[
\mu_\lambda(x)
:= \mathbf{1}\{x\neq 0\}\cdot \lambda \sqrt{n}\cdot \frac{\Phi(xx^\top)}{\|x\|^2}.
\]
The proof has two parts. First, in \Cref{sec:fixed-signal-lse} we establish expansion for $\PP_{\lambda,x}$ by appealing to known results on Gaussian space. Then, in \Cref{sec:mixture} we use this to show large-set expansion for the mixture $\PP_\lambda=\EE_{x\sim \pi^{\otimes n}}[\PP_{\lambda,x}]$, with a core technical claim deferred to \Cref{sec:transfer}. This last step uses permutation invariance and requires some delicate second-moment arguments to show that ``nearby'' signals induce spiked distributions that are statistically ``close enough'' (see Remark~\ref{rem:lr-choice}).

\subsection{Deterministic signal}
\label{sec:fixed-signal-lse}

\begin{lemma}[Expansion for deterministic signal]
\label{lem:gaussian-expansion}
For every \(\rho \in (0,1)\) and every \(a>0\), there exists a constant \(c>0\), depending only on \(\rho\) and \(a\), such that the following holds. Fix \(0<\lambda<\lambda'\) with \(\lambda/\lambda'=\rho\). For every \(x\in \RR^n\) and every \(\beta\ge 0\), if \(g:\RR^N \to [0,1]\) satisfies
\[
\EE_{\PP_{\lambda, x}}[g] \ge \beta,
\]
then
\[
\PP_{\lambda', x}\!\left(T_\rho g \ge \beta^{c}\right)\ge 1-\beta^{a}.
\]
\end{lemma}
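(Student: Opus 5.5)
Since $x$ is fixed, the plan is to translate everything to the standard Gaussian space and invoke a reverse hypercontractive (small-set/large-set expansion) inequality for the Ornstein--Uhlenbeck semigroup. Write $\mu' := \mu_{\lambda'}(x)$, so that $\mu_\lambda(x) = \rho\mu'$. Define the shifted function $\tilde g(z) := g(\rho\mu' + z)$, so the hypothesis reads $\EE_{Z\sim\cN(0,I_N)}[\tilde g(Z)] \ge \beta$. For $Y = \mu' + Z \sim \PP_{\lambda',x}$ we compute
\[
T_\rho g(Y) = \EE_G\bigl[g(\rho\mu' + \rho Z + \sqrt{1-\rho^2}G)\bigr] = (T_\rho \tilde g)(Z).
\]
So the desired statement is equivalent to a statement purely about $\cN(0,I_N)$: if $h:\RR^N\to[0,1]$ has $\EE[h]\ge\beta$, then $\Pr_Z(T_\rho h(Z) \ge \beta^c) \ge 1-\beta^a$, with $c$ depending only on $\rho,a$ (in particular not on $N$ or $x$). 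The shift exactly cancels, which is the point of choosing the noise operator with $T_\rho\PP_{\lambda',x}=\PP_{\lambda,x}$.

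To prove this Gaussian statement, I would use reverse hypercontractivity (Borell; see Mossel--O'Donnell--Regev--Steif--Sudakov, or O'Donnell's book, Ch.~10--11): for nonnegative $h$ and $-\infty<q<p<1$ with $\rho \le \sqrt{(1-p)/(1-q)}$, we have $\|T_\rho h\|_q \ge \|h\|_p$. Its standard corollary for sets is that for $A,B$ with Gaussian measures $\ge \beta$ and $\beta^{a}$ respectively, we get $\EE[\mathbf 1_B T_\rho \mathbf 1_A] \ge \beta^{c'}$ for a constant $c'=c'(\rho,a)$. I would apply it with $B := \{T_\rho h < \beta^c\}$, arguing by contradiction: suppose $\Pr(B) > \beta^a$. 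Then $\EE[\mathbf 1_B T_\rho h] < \beta^c\Pr(B)$. On the other hand, the two-function form $\EE[f\,T_\rho h] \ge \|f\|_{p'}\|h\|_{p}$ with $p,p'<1$ suitably chosen (Borell's reverse Hölder form), applied with $f=\mathbf 1_B$, gives the lower bound $\EE[\mathbf 1_B T_\rho h] \ge \Pr(B)^{1/p'}\,\EE[h^{p}]^{1/p}$. Since $h\in[0,1]$ and $0<p<1$, we have $\EE[h^p]\ge\EE[h]\ge\beta$. Hence
\[
\Pr(B)^{1/p'-1}\beta^{1/p} < \beta^c ,
\]
and using $\Pr(B)>\beta^a$, this gives a contradiction once $c > a(1/p'-1)+1/p$. (Here $p'$ is negative, or is taken via the appropriate conjugate, so that $1/p'-1$ has the right sign; I would pick concrete exponents, e.g.\ $p=p'$ symmetric with $\rho=(1-p)$ type relation, when writing this out.)

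The main obstacle is getting the exponents and signs in the reverse-Hölder/reverse-hypercontractive inequality right, so that the constant $c$ depends only on $\rho$ and $a$. One must also handle the non-indicator $h$ (which is fine since $h\le1$ gives $h^p\ge h$) and the edge cases $\beta=0$ (trivial) or $\beta$ near $1$. Dimension-freeness is automatic because reverse hypercontractivity tensorizes. If the two-function form proves awkward, I would instead cite directly the Gaussian version of the large-set expansion lemma in Mossel et al.\ with sets $A=\{h\ge\beta/2\}$, which has measure $\ge\beta/2$, and conclude via $T_\rho h \ge (\beta/2)T_\rho\mathbf 1_A$.
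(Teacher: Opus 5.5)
Your proposal is correct and is essentially the paper's proof: the paper uses the same shift $\tilde g(z)=g(\mu_\lambda(x)+z)$ to reduce to $\cN(0,I_N)$, then applies two-function reverse hypercontractivity $\langle f,T_\rho h\rangle\ge\|f\|_{1-\rho}\|h\|_{1-\rho}$ with $f=\mathbf 1\{T_\rho\tilde g<\beta^c\}$ and $h=\tilde g$, using $\tilde g^{1-\rho}\ge\tilde g$, and this gives $c=(1+\rho a)/(1-\rho)$, which is exactly your threshold with $p=p'=1-\rho$. Your parenthetical worry about signs is unfounded: $p'=1-\rho\in(0,1)$ already makes $1/p'-1=\rho/(1-\rho)>0$. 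A negative $p'$ would in fact break the argument, since $\mathbf 1_B^{p'}=\infty$ off $B$ forces $\|\mathbf 1_B\|_{p'}=0$ and the bound becomes vacuous.
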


We note that while this bound is good enough for our purposes, it is not sharp: for instance, the conclusion is vacuous when $\beta = 1$. See Remark~\ref{rem:iso} for further discussion.

\begin{proof}
If \(\beta=0\), the claim is immediate, so assume \(\beta>0\).
The hypothesis also forces \(\beta\le 1\).
Note that \(\mu_\lambda(x)=\rho\mu_{\lambda'}(x)\). Define
\[
\widetilde g(z):=g(\mu_\lambda(x)+z).
\]
Then, under $\QQ = \cN(0,I_N)$ we have \(\EE_{\QQ}[\widetilde g]=\EE_{\PP_{\lambda,x}}[g]\ge \beta\) and
\[
T_\rho\widetilde g(z)=T_\rho g(\mu_{\lambda'}(x)+z).
\]
It remains to prove the corresponding Gaussian statement for \(\widetilde g\). The two-function Gaussian reverse hypercontractivity inequality (e.g.~\cite[Exercise~10.6]{odonnell-book} or~\cite[Theorem~1,~Example~1]{CDP-gaussian-holder})
states that for every $\rho \in (0,1)$ and all nonnegative \(f,h:\RR^N\to\RR\),
\begin{equation}\label{eq:rev-hyp}
\langle f,T_\rho h\rangle
\geq
\|f\|_{1-\rho}\|h\|_{1-\rho}.
\end{equation}
Here we are using the inner product $\langle \cdot,\cdot \rangle = \langle \cdot,\cdot \rangle_\QQ$ and $p$-norm with respect to $\QQ$: $\|f\|_p := \EE_\QQ[f^{p}]^{1/p}$. We apply~\eqref{eq:rev-hyp} with
\[
f=\mathbf{1}\{T_\rho \widetilde g<\beta^{c}\},
\qquad
h=\widetilde g
\]
for a constant \(c\) to be specified later. Write \(q:=\QQ(T_\rho \widetilde g<\beta^{c})\). Then
\[
\langle f,T_\rho h\rangle
=
\EE_{\QQ}[f\,T_\rho \widetilde g]
\le
q\,\beta^{c},
\]
since \(f=1\) only on the event \(\{T_\rho \widetilde g<\beta^{c}\}\). On the other hand,
\begin{equation}\label{eq:apply-rh}
\langle f, T_\rho h\rangle
\ge
\|f\|_{1-\rho}\|\widetilde g\|_{1-\rho}
\ge
(\EE_\QQ[f])^{1/(1-\rho)} (\EE_\QQ[\widetilde g])^{1/(1-\rho)}
\ge
q^{1/(1-\rho)}\beta^{1/(1-\rho)},
\end{equation}
since \(f\) and \(\widetilde g\) have outputs in \([0,1]\). Combining the two inequalities above and setting
\[
c=\frac{1+\rho a}{1-\rho}
\]
yields \(q\le \beta^a\). Thus
\[
\PP_{\lambda',x}(T_\rho g\ge \beta^c)
=
\QQ(T_\rho\widetilde g\ge \beta^c)
\ge
1-\beta^a. \qedhere
\]
\end{proof}

\begin{remark}\label{rem:iso}
A key inequality in~\eqref{eq:apply-rh} above was a bound on $\langle f,T_\rho h\rangle$ in terms of $\EE[f]$ and $\EE[h]$, where $f,g$ are $[0,1]$-valued functions with standard Gaussian inputs. While not needed for our purposes, we note that sharper --- in fact, \emph{exact} --- bounds of this form are known, as a consequence of the \emph{two-function Borell isoperimetric theorem}~\cite{borell,MN-robust} (see~\cite{odonnell-book}, pg.~375 combined with Exercise~11.19c). Specifically, for fixed values of $\EE[f]$ and $\EE[h]$, the quantity $\langle f,T_\rho h\rangle$ is minimized by taking $f,h$ to be indicators for opposing half-spaces, and maximized by taking $f,h$ to be indicators for nested half-spaces.
\end{remark}

\subsection{Extending to a mixture of signals}
\label{sec:mixture}

We now turn to the proof of Lemma~\ref{lem:lse}, now considering a mixture of signals rather than a deterministic one. Recall that $\pi$ is supported on a finite set, which we denote by $\cS = \{s_1,\ldots,s_m\}$ where $m := |\cS|$.

For \(B\ge 1\), call a distribution \(\nu\) over \(\RR\) \(B\)-\emph{typical} if it is supported on \(\cS\) and
\[
|\nu(s_j)-\pi(s_j)|\le \frac{B}{\sqrt n}
\qquad\text{for every }j\in[m].
\]

\begin{claim}
\label{claim:finite-support-typicality}
If \(x\sim \pi^{\otimes n}\) and \(\nu_x\) denotes its empirical distribution, then
\[
\Pr(\nu_x\ \mathrm{is}\ B\text{-}\mathrm{typical}) \ge 1-2m\cdot \exp(-2B^2).
\]
\end{claim}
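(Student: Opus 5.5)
The plan is a coordinatewise Hoeffding bound followed by a union bound over the $m$ support points. For each $j\in[m]$, the empirical frequency $\nu_x(s_j)=\frac1n\sum_{i=1}^n \mathbf{1}\{x_i=s_j\}$ is an average of $n$ i.i.d.\ Bernoulli random variables with mean $\pi(s_j)$, each taking values in $[0,1]$.

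Hoeffding's inequality then gives, for each fixed $j$,
\[
\Pr\!\left(|\nu_x(s_j)-\pi(s_j)| > \frac{B}{\sqrt n}\right)\le 2\exp\!\left(-2n\cdot \frac{B^2}{n}\right)=2\exp(-2B^2).
\]
A union bound over the $m$ values $j\in[m]$ bounds the probability that some coordinate deviates by more than $B/\sqrt n$ by $2m\exp(-2B^2)$.

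It remains to check the support condition in the definition of $B$-typical. Since every $x_i\in\cS$ almost surely, $\nu_x$ is automatically supported on $\cS$. Hence, off the bad event, $\nu_x$ is $B$-typical, which is the claim.

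There is no real obstacle here. The only point needing care is using the correct form of Hoeffding's inequality for $[0,1]$-valued summands, namely the exponent $2nt^2$ with $t=B/\sqrt n$.
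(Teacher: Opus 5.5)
Your proposal is correct and matches the paper's proof: a Hoeffding bound for each support point $s_j$, applied to the empirical frequency $\nu_x(s_j)$ (the paper phrases it via $n\nu_x(s_j)\sim\mathrm{Binomial}(n,\pi(s_j))$), followed by a union bound over the $m$ points. Your explicit check that $\nu_x$ is automatically supported on the support of $\pi$ is a step the paper leaves implicit.
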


\begin{proof}
Fix \(j\in[m]\). Then \(n\nu_x(s_j)\sim \mathrm{Binomial}(n,\pi(s_j))\), so Hoeffding's inequality gives
\[
\Pr\!\left(\left|\nu_x(s_j)-\pi(s_j)\right|>\frac{B}{\sqrt n}\right)
\le 2\exp(-2B^2).
\]
A union bound over \(j\in[m]\) proves the claim.
\end{proof}

The following is a key claim that will allow us to compare the spiked models induced by two different ``typical'' signals.

\begin{claim}
\label{claim:transfer}
For any $\lambda \ge 0$ there are constants \(b,C>0\), depending only on $\lambda$ and $\pi$, such that if \(B\le b\sqrt n\) and \(\nu_x,\nu_{\hat x}\) are two \(B\)-typical empirical distributions for some \(x,\hat x\in \RR^n\), the following holds. For any event $A \subseteq \RR^N$ that is permutation-invariant in the sense that $y \in A \implies \sigma \cdot y \in A$ for all $\sigma \in S_n$ (see Definition~\ref{def:perm-inv}),
\[
{\PP}_{\lambda,x}(A)\le \exp(CB^2)\,{\PP}_{\lambda,\hat x}(A)^{1/2}.
\]
\end{claim}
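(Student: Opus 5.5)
The plan is to reduce to a chi-squared (second-moment) computation, using permutation invariance to replace the fixed spike $x$ by a cleverly randomized version of it. Since $A$ is permutation-invariant and $\PP_{\lambda,\sigma x}$ is the pushforward of $\PP_{\lambda,x}$ under the action of $\sigma$ (\Cref{def:perm-inv}), we have $\PP_{\lambda,x}(A)=\PP_{\lambda,\sigma x}(A)$ for every $\sigma\in S_n$. Hence for \emph{any} distribution $\mathcal{D}$ on $S_n$ (possibly depending on $x,\hat x$), $\PP_{\lambda,x}(A)=\overline{\PP}(A)$ where $\overline{\PP}:=\EE_{\sigma\sim\mathcal D}\PP_{\lambda,\sigma x}$. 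Then Cauchy--Schwarz against the fixed-spike reference $\PP_{\lambda,\hat x}$ gives
\[
\PP_{\lambda,x}(A)=\EE_{\PP_{\lambda,\hat x}}\!\left[\tfrac{d\overline{\PP}}{d\PP_{\lambda,\hat x}}\mathbf 1_A\right]\le \left(\EE_{\PP_{\lambda,\hat x}}\!\left[\left(\tfrac{d\overline{\PP}}{d\PP_{\lambda,\hat x}}\right)^2\right]\right)^{1/2}\PP_{\lambda,\hat x}(A)^{1/2},
\]
so it suffices to show that the second moment is at most $\exp(2CB^2)$. Because both measures are Gaussian shifts, the usual computation gives
\[
\EE_{\PP_{\lambda,\hat x}}\!\left[\left(\tfrac{d\overline{\PP}}{d\PP_{\lambda,\hat x}}\right)^2\right]=\EE_{\sigma,\sigma'\sim\mathcal D}\exp\bigl(\langle D_\sigma,D_{\sigma'}\rangle\bigr),\qquad D_\sigma:=\mu_\lambda(\sigma x)-\mu_\lambda(\hat x),
\]
with $\sigma,\sigma'$ independent.

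The choice of $\mathcal D$ is the subtle point. Taking $\sigma$ uniform fails, because $\|\mu_\lambda(\hat x)\|^2\asymp\lambda^2 n$ and a random permutation is nearly orthogonal to $\hat x$, so the exponent is of order $n$. Instead, I would take $\sigma$ uniform among permutations that make $\sigma x$ agree with $\hat x$ on as many coordinates as possible. Both empirical distributions are $B$-typical and live on the common finite set $\cS$, so one can arrange $\sigma x$ and $\hat x$ to differ on only $k\le 2mB\sqrt n$ coordinates. Concretely, $\mathcal D$ picks a uniformly random set $K$ of $k$ positions, subject to the constraint that $\hat x$ takes prescribed counts of each symbol on $K$. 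It matches symbols outside $K$ and places the surplus symbols of $x$ on $K$ in random order. Note that typicality also gives $\|x\|^2=\|\hat x\|^2(1+O(B/\sqrt n))$, so the normalizations $\|x\|^{-2}$ and $\|\hat x\|^{-2}$ differ only by a small correction.

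Writing $\sigma x=\hat x+\delta_\sigma$ with $\delta_\sigma$ supported on $K_\sigma$ and having bounded entries, I expand
\[
D_\sigma\approx\frac{\lambda}{\sqrt n}\,\Phi\bigl(\delta_\sigma\hat x^\top+\hat x\delta_\sigma^\top+\delta_\sigma\delta_\sigma^\top\bigr)+(\text{normalization error}).
\]
This yields $\langle D_\sigma,D_{\sigma'}\rangle\approx\lambda^2\bigl(c_1\langle\delta_\sigma,\delta_{\sigma'}\rangle+c_2\langle\delta_\sigma,\hat x\rangle\langle\delta_{\sigma'},\hat x\rangle/n\bigr)+\ldots$. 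The main term is governed by $|K_\sigma\cap K_{\sigma'}|$, which is hypergeometric with mean $k^2/n=O(B^2)$ and Poisson-type tails. Therefore $\EE\exp(c\lambda^2|K_\sigma\cap K_{\sigma'}|)\le\exp(O(B^2))$, provided $\lambda^2$ times the maximal entry size is bounded, which holds for finite support. The terms $\langle\delta,\hat x\rangle^2/n$ are $O(k^2/n)=O(B^2)$ deterministically, since $|\langle\delta,\hat x\rangle|=O(k)$. The $\delta\delta^\top$ and normalization terms are similarly $O(k^2/n)$, and here I would use $B\le b\sqrt n$ (so that $k\le\varepsilon n$) to keep the hypergeometric moment generating function under control.

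I expect the main obstacle to be making this second-moment bound rigorous with only $\exp(O(B^2))$ loss. This requires careful bookkeeping of all cross terms, including the diagonal $1/\sqrt2$ scaling and the random normalization $\|\cdot\|^{-2}$, and verifying that every term is either a bounded function of the overlap $|K_\sigma\cap K_{\sigma'}|$ or deterministically $O(B^2)$. Getting the constraint on which symbols sit on $K$ exactly right is fiddly, since it ensures that the symbol-wise overlaps, not just the set overlaps, are controlled. I would first work out the case where $\pi$ is Rademacher, where there are two symbols and the mismatched set is a random set of sign flips, and then generalize symbol by symbol.
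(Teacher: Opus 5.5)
Your proposal is correct and matches the paper's proof. Your $\mathcal D$, uniform over maximal-agreement permutations, induces exactly the paper's random vector $y$: delete uniformly random subsets $R_i \subseteq I_i$ of size $\Delta_i$ from $\hat x$ and refill them with the surplus symbols of $\nu_x$ in random order. The paper then does what you outline: permutation invariance, Cauchy--Schwarz against the fixed-spike law $\PP_{\lambda,\hat x}$, the bound $|\langle \mu_\lambda(y)-\mu_\lambda(\hat x),\mu_\lambda(y')-\mu_\lambda(\hat x)\rangle| \le O(B^2+|R\cap R'|)$, and an exponential-moment bound on the symbol-wise hypergeometric overlap $|R\cap R'|$ via $\EE\binom{H_i}{k}\le (\Delta_i^2/n_i)^k/k!$.
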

We defer the proof of \Cref{claim:transfer} to \Cref{sec:transfer}. We will first show how it implies the desired result.

\begin{proof}[Proof of \cref{lem:lse}]
Let \(b,C\) be the constants from \Cref{claim:transfer} applied at \(\lambda'\), and set \(a:=2C+4\).
Let \(c'>0\) be the constant supplied by \Cref{lem:gaussian-expansion} with failure exponent \(a\), and set \(c:=2c'\). Define
\[
\eps_0
:=
\min\left\{
\frac12,
\frac{1}{2(4m)^C}
\right\}.
\]
For a given \(\eps\in(0,\eps_0]\), set
\[
B:=\sqrt{\log(4m/\eps)}.
\]
The definition of \(\eps_0\) ensures \(B\ge 1\) and \((4m)^C\eps\le 1/2\).
Let \(\mathfrak T_B\) denote the set of \(B\)-typical empirical distributions on \(\cS\). Choose $\kappa \in (0,b^2/\log 2)$ so that (for sufficiently large $n$) the lower bound \(\eps\ge 2^{-\kappa n}\) ensures that \(B\le b\sqrt n\), as required by \Cref{claim:transfer}. Writing \(\PP_\lambda=\EE_{x\sim \pi^{\otimes n}}[\PP_{\lambda,x}]\), we have
\[
\eps\le \EE_{\PP_\lambda}[g]
=
\EE_x\!\left[\EE_{\PP_{\lambda,x}}[g]\cdot \mathbf{1}\{\nu_x\in \mathfrak T_B\}\right]
\;+\;
\EE_x\!\left[\EE_{\PP_{\lambda,x}}[g]\cdot \mathbf{1}\{\nu_x\notin \mathfrak T_B\}\right].
\]
Applying \cref{claim:finite-support-typicality} yields
\[
\eps
\le
\EE_x\!\left[\EE_{\PP_{\lambda,x}}[g]\cdot \mathbf{1}\{\nu_x\in \mathfrak T_B\}\right]
\, + \, 2m\exp(-2B^2),
\]
since \(0\le g\le 1\). With our choice of \(B\), the error term is
\[
2m\exp(-2B^2)
=
\frac{\eps^2}{8m}
\le \eps/2.
\]
Thus
\[
\EE_x\!\left[\EE_{\PP_{\lambda,x}}[g]\cdot \mathbf{1}\{\nu_x\in \mathfrak T_B\}\right]\ge \frac{\eps}{2},
\]
meaning there exists \(x_*\in\RR^n\) such that \(\nu_{x_*}\in \mathfrak T_B\) and
\[
\EE_{\PP_{\lambda,x_*}}[g]\ge \frac{\eps}{2}.
\]
Applying \cref{lem:gaussian-expansion} gives
\[
\PP_{\lambda',x_*}(T_\rho g< (\eps/2)^{c'})\le (\eps/2)^a.
\]
Since \(g\) is permutation-invariant, the event \(\{T_\rho g<(\eps/2)^{c'}\}\) is also permutation-invariant. Using \Cref{claim:finite-support-typicality} for the atypical profiles and \Cref{claim:transfer} with reference profile \(x_*\) for the typical profiles,
\begin{align*}
	\PP_{\lambda'}(T_\rho g < (\eps/2)^{c'}) &= \EE_{x\sim \pi^{\otimes n}}[{\PP}_{\lambda',x}(T_\rho g < (\eps/2)^{c'})]\\
	&\leq \Pr_{x\sim \pi^{\otimes n}}[\nu_x\notin \mathfrak T_B] + \max_{x \, : \, \nu_x\in \mathfrak T_B}{\PP}_{\lambda',x}(T_\rho g < (\eps/2)^{c'})\\
	&\leq 2m\cdot \exp(-2B^2) + \exp(CB^2)\cdot (\eps/2)^{a/2}\\
	&\le \eps/2 + (4m)^C\eps^{a/2-C}.
\end{align*}
Since \(a/2-C=2\), the last display is at most \(\eps/2+(4m)^C\eps^2\le \eps\).
Also, since \(\eps_0\le 1/2\) and \(c=2c'\), we have \(\eps^c\le(\eps/2)^{c'}\) for every \(\eps\le \eps_0\). Therefore
\[
\PP_{\lambda'}(T_\rho g < \eps^{c})\le \eps,
\]
as claimed.
\end{proof}

\subsection{Proof of the transfer claim: \Cref{claim:transfer}}
\label{sec:transfer}

\begin{proposition}
\label{prop:finite-support-norm}
If \(z\in\RR^n\) has \(B\)-typical empirical distribution, then
\[
\bigl|\|z\|_2^2-n\bigr|\le \sum_{j=1}^m s_j^2\cdot B\sqrt{n}.
\]
\end{proposition}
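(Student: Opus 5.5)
The plan is to write $\|z\|_2^2$ in terms of the empirical distribution and compare it with the second moment of $\pi$. Let $\nu_z$ be the empirical distribution of $z$. By $B$-typicality it is supported on $\cS=\{s_1,\ldots,s_m\}$, so every coordinate of $z$ lies in $\cS$. Grouping coordinates by value gives
\[
\|z\|_2^2=\sum_{i=1}^n z_i^2 = n\sum_{j=1}^m s_j^2\,\nu_z(s_j).
\]

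Next, use that $\pi$ has variance one and mean zero, so $\EE_{u\sim\pi}[u^2]=1$. Since $\pi$ is supported on $\cS$, this reads $\sum_{j=1}^m s_j^2\,\pi(s_j)=1$, and hence
\[
n=n\sum_{j=1}^m s_j^2\,\pi(s_j).
\]
Subtracting the two displays,
\[
\|z\|_2^2-n=n\sum_{j=1}^m s_j^2\bigl(\nu_z(s_j)-\pi(s_j)\bigr).
\]

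Finally, apply the triangle inequality together with the typicality bound $|\nu_z(s_j)-\pi(s_j)|\le B/\sqrt n$ for each $j$:
\[
\bigl|\|z\|_2^2-n\bigr|\le n\sum_{j=1}^m s_j^2\cdot\frac{B}{\sqrt n}=\sum_{j=1}^m s_j^2\cdot B\sqrt n,
\]
which is the claimed bound.

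The only point needing care is that typicality forces the support of $\nu_z$ inside $\cS$, so the sum over $j\in[m]$ accounts for every coordinate of $z$. This holds by the definition of $B$-typical, so I expect no real obstacle.
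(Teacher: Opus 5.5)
Your proposal is correct and matches the paper's proof essentially line for line. Both write $\|z\|_2^2-n = n\sum_{j} s_j^2\bigl(\nu_z(s_j)-\pi(s_j)\bigr)$ using $\sum_j \pi(s_j)s_j^2=1$, then apply the triangle inequality with the typicality bound $B/\sqrt{n}$.
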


\begin{proof}
Write \(\nu_z\) for the empirical distribution of \(z\). Since \(\sum_j \pi(s_j)s_j^2=1\),
\[
\|z\|_2^2-n
=
n\sum_{j=1}^m (\nu_z(s_j)-\pi(s_j))s_j^2.
\]
Therefore,
\[
\bigl|\|z\|_2^2-n\bigr|
\le
n\sum_{j=1}^m \frac{B}{\sqrt n}s_j^2
=
\sum_{j=1}^m s_j^2\cdot B\sqrt{n}.
\]
\end{proof}

\begin{proof}[Proof of \Cref{claim:transfer}]
Fix \(\lambda\ge 0\). Let \(b>0\) be small enough that
\[
b\le
\min\left\{
\frac12\min_{i\in[m]}\pi(s_i), \,
\frac{1}{2\sum_{i=1}^m s_i^2}
\right\}.
\]
Then every \(B\)-typical empirical distribution with \(B\le b\sqrt n\) assigns mass at least \(\pi(s_i)/2\) to each \(s_i\).

Fix \(x,\hat x\in \RR^n\) with $B$-typical empirical distributions as in the claim. We will describe a procedure for sampling a random vector $y$ that is ``close'' to $\hat{x}$ but has the empirical distribution $\nu_x$.
For each \(i\in[m]\), let
\[
I_i:=\{k\in[n]:\hat x_k=s_i\},
\qquad
\Delta_i:=n\max\{0,\nu_{\hat x}(s_i)-\nu_x(s_i)\}.
\]
Let \(J:=\{i\in[m]:\Delta_i>0\}\). For each \(i\in J\), choose a uniformly random subset \(R_i\subseteq I_i\) of size \(\Delta_i\), independently across \(i\), and write
\[
R:=\bigcup_{i\in J} R_i.
\]
Thus \(R\) records the coordinates where \(\hat x\) has too many copies of a symbol relative to \(\nu_x\). Let \(T\) be the multiset containing \(n\max\{0,\nu_x(s_i)-\nu_{\hat x}(s_i)\}\) copies of \(s_i\) for each \(i\). Since \(\nu_x\) and \(\nu_{\hat x}\) are both probability measures, $|R| = |T|$. Form a random vector \(y\) as follows: choose $R$ randomly as above, delete the entries of \(\hat x\) indexed by \(R\), and then refill those \(|R|\) vacant positions with the values from $T$, inserted in a uniformly random order. For each \(i\in[m]\), this removes exactly \(n\max\{0,\nu_{\hat x}(s_i)-\nu_x(s_i)\}\) copies of \(s_i\) from \(\hat x\) and then inserts exactly \(n\max\{0,\nu_x(s_i)-\nu_{\hat x}(s_i)\}\) copies of \(s_i\). Hence the total number of \(s_i\)-valued coordinates in \(y\) is
\[
n\nu_{\hat x}(s_i)
-n\max\{0,\nu_{\hat x}(s_i)-\nu_x(s_i)\}
+n\max\{0,\nu_x(s_i)-\nu_{\hat x}(s_i)\}
=
n\nu_x(s_i),
\]
so \(y\) has empirical distribution \(\nu_x\) with probability \(1\). Let \(\PP_{\lambda,\hat x\to \nu_x} := \EE_{y}[\PP_{\lambda, y}]\) denote the corresponding planted law.

Our aim will be to show that the distributions $\PP_{\lambda,\hat x\to \nu_x}$ and $\PP_{\lambda,\hat x}$ are statistically ``close'' in an appropriate sense. We approach this using a standard tool: the second moment of the likelihood ratio (closely related to the \emph{chi-squared divergence}). Specifically, we will show the bound \begin{equation}\label{eq:lr-bound}
\left\|
\frac{d\PP_{\lambda,\hat x\to \nu_x}}{d\PP_{\lambda,\hat x}}
\right\|_{\PP_{\lambda,\hat x}}^2
\le
\exp(CB^2)
\end{equation}
for a constant $C = C(\lambda,\pi)$. Let us first see how this bound implies the desired result. Because the event \(A\) is permutation-invariant, \(\PP_{\lambda,z}(A)\) depends only on the empirical distribution of \(z\). Since every realization of \(y\) has empirical distribution \(\nu_x\), this gives
\[
\PP_{\lambda,\hat x\to \nu_x}(A)=\EE_y[\PP_{\lambda,y}(A)]=\PP_{\lambda,x}(A).
\]
Finally, Cauchy--Schwarz gives
\[
\PP_{\lambda,x}(A)
=
\PP_{\lambda,\hat x\to \nu_x}(A)
= \left\langle \frac{d\PP_{\lambda,\hat x\to \nu_x}}{d\PP_{\lambda,\hat x}}, \mathbf{1}_A \right\rangle_{\PP_{\lambda,\hat x}}
\le
\left\|
\frac{d\PP_{\lambda,\hat x\to \nu_x}}{d\PP_{\lambda,\hat x}}
\right\|_{\PP_{\lambda,\hat x}}
\PP_{\lambda,\hat x}(A)^{1/2}
\le
\exp(CB^2)\,\PP_{\lambda,\hat x}(A)^{1/2},
\]
as claimed. Before proving~\eqref{eq:lr-bound}, we remark on some specifics of this strategy.

\begin{remark}\label{rem:lr-choice}
The specific choice of second moment in~\eqref{eq:lr-bound} is somewhat delicate. First, it is important that the distribution in the denominator has a deterministic signal rather than a mixture, in order to calculate the second moment in a tractable way. For the numerator, one could imagine other choices for the distribution, e.g., a uniform mixture over all signals with empirical distribution $\nu_x$, or a fixed signal that has empirical distribution $\nu_x$ and is ``close'' to $\hat{x}$. Neither of these choices gives a small enough value for the second moment. It is important that the distribution in the numerator is a mixture over many signals, all of which are ``close'' to $\hat{x}$.
\end{remark}

It remains to prove~\eqref{eq:lr-bound} by bounding the second moment. Recall the notation $\mu_\lambda(x)$ from \Cref{sec:lse-overview}. A standard calculation with Gaussian densities (see e.g.\ Lemma~1 of~\cite{BMVVX}) shows that if \(y,y'\) are two independent draws of the random vector $y$ described above,
\[
\left\|
\frac{d\PP_{\lambda,\hat x\to \nu_x}}{d\PP_{\lambda,\hat x}}
\right\|_{\PP_{\lambda,\hat x}}^2
=
\EE_{y,y'} \exp\!\left(\left\langle \mu_\lambda(y)-\mu_\lambda(\hat x),\mu_\lambda(y')-\mu_\lambda(\hat x)\right\rangle\right).
\]
Let \(R'\) be the analogue of \(R\) in the generation of \(y'\). Because \(\nu_x\) and \(\nu_{\hat x}\) are \(B\)-typical,
\[
|R|=|R'|=\sum_{i=1}^m \Delta_i
\le
\sum_{i=1}^m n|\nu_{\hat x}(s_i)-\nu_x(s_i)|
\le 2mB\sqrt n.
\]
Since \(y\) and \(\hat x\) differ only on the coordinates in \(R\), every nonzero coordinate $(i,j)$ (or $i \equiv (i,i)$) of \(\Phi(y)-\Phi(\hat x)\) has at least one endpoint ($i$ or $j$) in \(R\). Likewise, every nonzero coordinate of \(\Phi(y')-\Phi(\hat x)\) has at least one endpoint in \(R'\). Therefore, every coordinate that is nonzero in both differences lies in one of the four regions
\[
R\times R',
\qquad
R'\times R,
\qquad
(R\cap R')\times [n],
\qquad
[n]\times (R\cap R').
\]
The total number of such coordinates is at most \(2(|R|^2+n|R\cap R'|)\), and every entry of \(\Phi(y)-\Phi(\hat x)\) and \(\Phi(y')-\Phi(\hat x)\) is \(O(1)\). Here and in the following, $O(\cdot)$ hides a constant factor depending only on $\lambda$ and $\pi$ (with no requirement that $n$ be large). Hence
\[
\left|
\left\langle \Phi(y)-\Phi(\hat x),\Phi(y')-\Phi(\hat x)\right\rangle
\right|
\le
O\bigl(|R|^2+n|R\cap R'|\bigr).
\]
Defining
\[
\widetilde \mu_\lambda(z):=\frac{\lambda}{\sqrt n}\Phi(z),
\]
this yields
\begin{equation}\label{eq:before-normalize}
\left|
\left\langle
\widetilde \mu_\lambda(y)-\widetilde \mu_\lambda(\hat x),
\widetilde \mu_\lambda(y')-\widetilde \mu_\lambda(\hat x)
\right\rangle
\right|
\le
O\bigl(B^2+|R\cap R'|\bigr).
\end{equation}

Next we bound the normalization error between \(\widetilde \mu_\lambda(z)\) and \(\mu_\lambda(z)\). Write
\[
\mu_\lambda(z)=\widetilde \mu_\lambda(z)+r_\lambda(z),
\qquad
r_\lambda(z)
=
\frac{\lambda}{\sqrt n}\left(\frac{n}{\|z\|_2^2}-1\right)\Phi(z).
\]
By \Cref{prop:finite-support-norm} and our choice of \(b\), every vector whose empirical distribution is either \(\nu_x\) or \(\nu_{\hat x}\) satisfies \(|\|z\|_2^2-n|\le O(B\sqrt n)\) and \(\|z\|_2^2\ge n/2\). Therefore
\[
\left|\frac{n}{\|z\|_2^2}-1\right|\le O\!\left(\frac{B}{\sqrt n}\right).
\]
Also, every entry of \(\Phi(z)\) is \(O(1)\), so
\[
\left|\left\langle \Phi(y)-\Phi(\hat x),\Phi(z)\right\rangle\right|
\le O(n|R|) \le O(Bn^{3/2}) \qquad \text{for } z\in\{y',\hat x\}.
\]
It follows that
\[
\left|
\left\langle \widetilde \mu_\lambda(y)-\widetilde \mu_\lambda(\hat x),r_\lambda(z)\right\rangle
\right|
\le
O(B^2) \qquad \text{for } z\in\{y',\hat x\},
\]
and similarly
\[
\left|
\left\langle r_\lambda(z),r_\lambda(z')\right\rangle
\right|
\le
O(B^2) \qquad \text{for } z,z'\in\{y,y',\hat x\}.
\]
Set
\[
D:=\widetilde \mu_\lambda(y)-\widetilde \mu_\lambda(\hat x),
\qquad
D':=\widetilde \mu_\lambda(y')-\widetilde \mu_\lambda(\hat x),
\]
and
\[
E:=r_\lambda(y)-r_\lambda(\hat x),
\qquad
E':=r_\lambda(y')-r_\lambda(\hat x).
\]
From~\eqref{eq:before-normalize} we have
\[
|\langle D,D'\rangle|\le O(B^2+|R\cap R'|),
\]
and the subsequent calculations show
\[
|\langle D,E'\rangle|,\quad
|\langle E,D'\rangle|,\quad
|\langle E,E'\rangle|
\quad\le O(B^2),
\]
where the bound for \(\langle E,D'\rangle\) uses the same argument with \(y\) and \(y'\) interchanged. Expanding
\[
\left\langle \mu_\lambda(y)-\mu_\lambda(\hat x),\mu_\lambda(y')-\mu_\lambda(\hat x)\right\rangle
=
\langle D+E,D'+E'\rangle
\]
therefore gives
\[
\left|
\left\langle \mu_\lambda(y)-\mu_\lambda(\hat x),\mu_\lambda(y')-\mu_\lambda(\hat x)\right\rangle
\right|
\le O\bigl(B^2+|R\cap R'|\bigr).
\]
Hence
\[
\left\|
\frac{d\PP_{\lambda,\hat x\to \nu_x}}{d\PP_{\lambda,\hat x}}
\right\|_{\PP_{\lambda,\hat x}}^2
\le
\exp(C'B^2)\,\EE\exp(C'|R\cap R'|)
\]
for a constant $C' = C'(\lambda,\pi) > 0$.

It remains to bound the exponential moment of $|R \cap R'|$. Since \(|R\cap R'|=\sum_{i\in J}|R_i\cap R_i'|\) and the summands are independent, it suffices to work one \(i\) at a time. Write \(H_i:=|R_i\cap R_i'|\) and \(n_i:=|I_i|=n\nu_{\hat x}(s_i)\). For every \(u\ge 0\),
\[
\EE[(1+u)^{H_i}]
=
\sum_{k=0}^{\Delta_i} u^k \, \EE\binom{H_i}{k}.
\]
For fixed \(k\), the random variable \(\binom{H_i}{k}\) counts the common \(k\)-subsets of \(R_i\) and \(R_i'\). Since \(R_i\) and \(R_i'\) are independent uniformly random \(\Delta_i\)-subsets of \(I_i\),
\[
\EE\binom{H_i}{k}
=
\frac{\binom{\Delta_i}{k}^2}{\binom{n_i}{k}}
\le
\frac{1}{k!}\left(\frac{\Delta_i^2}{n_i}\right)^k.
\]
Therefore
\[
\EE[(1+u)^{H_i}]
\le
\sum_{k=0}^{\infty}
\frac{1}{k!}\left(u\frac{\Delta_i^2}{n_i}\right)^k
=
\exp\!\left(u\frac{\Delta_i^2}{n_i}\right).
\]
Choosing $u$ to be a large enough constant depending on $C'$,
\[
\EE\exp(C' H_i) \le \exp\!\left(O\!\left(\frac{\Delta_i^2}{n_i}\right)\right).
\]
Note that \(\Delta_i\le 2B\sqrt n\). Since \(B\le b\sqrt n\) and \(\nu_{\hat x}\) is \(B\)-typical, our choice of \(b\) gives \(n_i\ge n\pi(s_i)/2\). Because \(m\) is fixed,
\[
\sum_{i\in J}\frac{\Delta_i^2}{n_i}\le O(B^2).
\]
Thus
\[
\EE\exp(C'|R\cap R'|)
=
\prod_{i\in J}\EE\exp(C' H_i)
\le
\exp(O(B^2)).
\]
We conclude
\[
\left\|
\frac{d\PP_{\lambda,\hat x\to \nu_x}}{d\PP_{\lambda,\hat x}}
\right\|_{\PP_{\lambda,\hat x}}^2
\le
\exp(O(B^2)),
\]
which proves~\eqref{eq:lr-bound}.
\end{proof}

\phantomsection
\addcontentsline{toc}{section}{Acknowledgments and Disclosure on AI Usage}

\section*{Acknowledgments and Disclosure on AI Usage}

ASW thanks Ji Oon Lee for answering some questions about linear spectral statistics.

The spiked Wigner result was discovered without the help of AI, while the computational Neyman--Pearson lemma --- particularly \Cref{lem:direct-roc-to-ratio} --- was proved in collaboration with GPT 5.5. GPT 5.5 has also suggested some slick proofs for routine technical lemmas, most notably the bound on moments of a normalized rank-1 signal (\Cref{lem:normalized-monomial-bound}). Some AI tools were used in the writing process, but the authors have verified --- and largely rewritten --- the AI output. The authors take full responsibility for the contents of this paper.

\phantomsection
\addcontentsline{toc}{section}{References}

\bibliographystyle{alpha}
\bibliography{main}

\end{document}